\newtheorem{theorem}{Theorem}[section]
\newtheorem{proposition}[theorem]{Proposition}
\newtheorem{corollary}[theorem]{Corollary}
\newtheorem{lemma}[theorem]{Lemma}
\newtheorem{definition}[theorem]{Definition}
\newtheorem{remark}[theorem]{Remark}
\newcommand{\nc}{\newcommand}
\nc{\tl}{\tilde}
\nc{\wt}{\widetilde}
\nc{\wh}{\widehat}
\nc{\mc}{\mathcal}
\nc{\mf}{\mathfrak}
\nc{\ms}{\mathsf}
\nc{\mr}{\mathrm}
\newcommand{\al}{\alpha}
\newcommand{\be}{\beta}
\newcommand{\del}{\delta}
\newcommand{\eps}{\epsilon}
\newcommand{\veps}{\varepsilon}
\newcommand{\ka}{\kappa}
\newcommand{\la}{\lambda}
\newcommand{\om}{\omega}
\newcommand{\si}{\sigma}
\newcommand{\mcA}{\mathcal{A}}
\newcommand{\mcB}{\mathcal{B}}
\newcommand{\mcI}{\mathcal{I}}
\newcommand{\mcR}{\mathcal{R}}
\newcommand{\mfg}{\mf{g}}
\newcommand{\mfgl}{\mf{g}\mf{l}}
\newcommand{\mfh}{\mf{h}}
\newcommand{\mfI}{\mf{I}}
\newcommand{\mfS}{\mf{S}}
\newcommand{\mfsl}{\mf{s}\mf{l}}
\newcommand{\mfso}{\mf{s}\mf{o}}
\newcommand{\mfsp}{\mf{s}\mf{p}}
\newcommand{\mfU}{\mf{U}}
\newcommand{\g}{\mf{g}}
\newcommand{\msJ}{\mathsf{J}}
\newcommand{\msR}{\mathsf{R}}
\newcommand{\msX}{\mathsf{X}}
\newcommand{\End}{\mr{End}}
\newcommand{\Hom}{\mr{Hom}}
\newcommand{\Ker}{\mr{Ker}}
\newcommand{\iso}{\stackrel{\sim}{\longrightarrow}}
\newcommand{\lra}{\longrightarrow}
\newcommand{\into}{\hookrightarrow}
\newcommand{\onto}{\twoheadrightarrow}
\newcommand{\C}{\mathbb{C}}
\newcommand{\Z}{\mathbb{Z}}
\newcommand{\ot}{\otimes}
\newcommand{\ol}{\overline}
\nc{\qu}{\quad}
\nc{\qq}{\qquad}
\newcommand {\Omit}[1]{}
\newcommand{\N}{\mathbb{N}}
\nc{\cur}{{\rm cr}}
\DeclareMathOperator{\ad}{ad}
\DeclareMathOperator{\id}{id}
\DeclareMathOperator{\gr}{gr}
\nc{\msum}{\medop\sum}
\numberwithin{equation}{section}
\renewcommand{\,}{\kern 0.1em} 
\nc{\spl}[1]{\begin{equation}\begin{aligned}#1\end{aligned}\end{equation}}
\nc{\eqa}[1]{\begin{align}#1\end{align}}
\nc{\eqn}[1]{\begin{align*}#1\end{align*}}
\nc{\eg}[1]{\begin{gather}#1\end{gather}}
\nc{\egn}[1]{\begin{gather*}#1\end{gather*}}
\nc{\ali}[1]{\begin{alignat}{50}#1\end{alignat}}
\nc{\aln}[1]{\begin{alignat*}{50}#1\end{alignat*}}
\nc{\red}{\color{red}}
\nc{\blu}{\color{blue}}
\nc{\br}{\color{Brown}}
\nc{\gre}{\color{green!50!black}}
\begin{document}

\begin{center}
{\Large{\textbf{Equivalences between three presentations of orthogonal and symplectic 
Yangians}}} 

\bigskip

Nicolas Guay$^{1a}$, Vidas Regelskis$^{2b}$, Curtis Wendlandt$^{1c}$

\end{center}

\bigskip

\begin{center} \small 
$^1$ University of Alberta, Department of Mathematics, CAB 632, Edmonton, AB T6G 2G1, Canada.\\ 
$^2$ University of York, Department of Mathematics, York, YO10 5DD,  UK. \\
\smallskip
E-mail: $^a$\,nguay@ualberta.ca $^b$\,vidas.regelskis@york.ac.uk $^c$\,cwendlan@ualberta.ca
\end{center}

\patchcmd{\abstract}{\normalsize}{}{}{}

\begin{abstract} \small 
We prove the equivalence of two presentations of the Yangian $Y(\mfg)$ of a simple Lie algebra $\mfg$ and we also show the equivalence with a third presentation when $\mfg$ is either an orthogonal or a symplectic Lie algebra. As an application, we obtain an explicit correspondence between two versions of the classification theorem of finite-dimensional irreducible modules for orthogonal and symplectic Yangians. 
\end{abstract}

\makeatletter
\@setabstract
\makeatother

\medskip

{
\setlength{\parskip}{0ex}
\tableofcontents
}

\thispagestyle{empty}


\section{Introduction}

Many mathematical objects can be described in apparently different, but actually equivalent, ways. Different ways lead to different points of view, some of which being better than others depending on what one is interested. This is true for algebraic structures given by generators and relations: different sets of generators and relations can lead to isomorphic structures, and which set is the most appropriate depends on the use made of that structure. In this paper, we consider three different presentations for the Yangian of a complex simple Lie algebra $\mfg$. Yangians form one  of the two important families of quantized enveloping algebras of affine type, the second being the quantum affine algebras. The original presentation of Yangians given in the work of V. Drinfeld (see \cite{Dr1}) is convenient for quantizing the standard Lie bialgebra structure on the polynomial current algebra of a semisimple Lie algebra $\g$ and it is commonly used in the work of theoretical physicists (see, for instance, \cite{Ber,Lo,Ma}). We will call it the $J$-presentation: see Definition \ref{D:Y(g)-DI}. It is not convenient, however, for the study of representations of the Yangian of $\mfg$, which is what motivated V. Drinfeld to obtain another presentation which is better suited for this purpose \cite{Dr3}; we will call this one the current presentation, see Definition \ref{D:Ycr(g)-}. There is a third set of generators and relations for Yangians which has its origins in the quantum inverse scattering method of theoretical physics and leads to the so-called $RTT$-presentation: it is sometimes also called the $R$-matrix or FRT-presentation, see \cite{FRT} and Definition \ref{D:Y(g)-RTT}. 

Those three different presentations of Yangians are all equivalent. The equivalence between the $J$ and $RTT$-presentation was stated in Theorem 6 of \cite{Dr1} and the equivalence between the $J$ and current presentation is the content of Theorem 1 in \cite{Dr3}. When $\mfg=\mfsl_n$, formulas for an explicit isomorphism between the current and $RTT$-presentations are given in \cite{Dr3}. It is possible to interpolate between the current and $RTT$-presentations for the Yangian of $\mfgl_n$: this was accomplished in \cite{BrKl} where the authors obtained so-called parabolic presentations of that Yangian depending on a partition of $n$, one extreme case being the $RTT$-presentation, the other extreme case being the current presentation. As a consequence, an isomorphism between the $RTT$ and current presentations of the Yangian of $\mfsl_n$ is obtained; it is in agreement with the formulas provided in \cite{Dr3}: see Remarks 5.12 and 8.8 of \cite{BrKl}. The results of \cite{BrKl} are also explained in Section 3.1 of \cite{Mo3}.

The formulas for the equivalence between the $RTT$ and current presentation in \cite{BrKl} came from the Gauss decomposition of the matrix of generators in the $RTT$-presentation. Very recently, this approach has been successfully extended to Yangians of orthogonal and symplectic Lie algebras: see \cite{JLM}. (The $\mfso_3$-case was treated previously in \cite{JL}.)

In this paper, we give a proof of Theorem 6 in \cite{Dr1} for orthogonal and symplectic Lie algebras (see Theorem~\ref{T:YR-iso}) and a proof of Theorem 1 in \cite{Dr3} for any $\mfg$ (see Theorem~\ref{T:Ycr(g)-}). These are two of the three main contributions of this paper. The $RTT$-presentation for symplectic and orthogonal Yangians was first treated in \cite{AACFR,AMR}, and later received more attention in the mathematical literature in such papers as \cite{Mo4, MM1, MM2,GR,GRW2}. The initial motivation for this paper came from a desire to better understand why this presentation of the Yangian is equivalent to the others. This led us to consider the analogous question regarding the $J$ and current presentations. Another motivation came from representation theory. It has been known since \cite{Dr3} that finite-dimensional irreducible representations of Yangians are classified by certain polynomials, usually called Drinfeld polynomials in the literature: to prove this classification result, Drinfeld used the current presentation. When $\mfg=\mfsl_n$, such a classification theorem can also be proved using the $RTT$-presentation (see, for instance, Corollary 3.4.8 in \cite{Mo3}) and the resulting classification is also given in terms of certain polynomials. This raises the question of how these two families of polynomials are related: the answer is provided in the proof of Corollary 3.4.9 in \textit{loc. cit.} and uses the Gauss decomposition. When $\mfg=\mfso_N$ or $\mfg = \mfsp_N$, the classification theorem was reproved in \cite{AMR} using the $RTT$-presentation: see Corollary 5.19 in \textit{loc.~cit.} It was explained by the authors of \cite{AMR} that, without an explicit isomorphism between the $RTT$ and current presentations available, it was not clear how to translate Drinfeld's classification result into one compatible with the $RTT$-presentation of the Yangian. However, they were able to use techniques available in the $R$-matrix setting to provide a separate proof which again yields a parameterization in terms of certain tuples of monic polynomials.  This left open the question of how the polynomials in Drinfeld's classification are related to those in \cite{AMR}: in Theorem \ref{T:cr-R}, we are able to use the isomorphism \eqref{J->R} to provide an answer to this question. This is the third main result of this paper.  It should also be possible to use the isomorphism obtained very recently in \cite{JLM} via the Gauss decomposition to answer that question, as was done when $\mfg=\mfsl_n$ in the proof of Corollary 3.4.9 in \cite{Mo3}.

The results of Section \ref{sec:Jcur} are valid for any semisimple Lie algebra $\mfg$. After recalling the $J$ and current presentations of the Yangian in Sections \ref{sec:Jpres} and \ref{sec:curpres}, we prove in Section \ref{sec:equivJcur} that these are equivalent: see Theorem \ref{T:Ycr(g)-}. (The $\mfsl_2$-case is proved separately in the Appendix.) Along the way, we show that the associated graded ring of the Yangian as given by the $J$-presentation is isomorphic to the enveloping algebra of the polynomial current algebra of $\mfg$: see Proposition \ref{grYzeta}. This result has been known for a long time for the current presentation \cite{Le1} and the $RTT$-presentation \cite{Mo3,AMR}, but Proposition \ref{grYzeta} cannot be deduced from these without first proving the equivalence between these presentations and our approach requires this proposition to prove Theorem \ref{T:Ycr(g)-}. Of course, Proposition \ref{grYzeta} is also already known, but not only has its proof never been published, it seems to have not been explicitly stated anywhere in the literature as far as we know. A major part of the argument in the proof of Theorem \ref{T:Ycr(g)-} relies on Theorem \ref{smallerset}, which is the main result of \cite{Le2} and tells us precisely how to reduce the number of generators and relations in the current presentation. 

In the next section, we prove the equivalence of the $J$ and $RTT$-presentation when $\mfg$ is an orthogonal or symplectic Lie algebra - see Theorem \ref{T:YR-iso}. We start with the Yangian given by the $J$-presentation and show how to extend certain representations of $\mfg$ to the Yangian (e.g.~the natural representation on $\C^N$), thereby providing a proof of Theorem~7 in \cite{Dr2} in types B, C and D. (Actually, we fill in some details in the proof of Proposition 12.1.17 in \cite{ChPr2} which the authors had decided not to include.) In order to construct the isomorphism between the $J$ and $RTT$ presentations as given in Theorem \ref{T:YR-iso}, we needed to know that the $R$-matrix \eqref{R(u)} could be obtained by evaluating the universal $R$-matrix on the natural representation: this is the content of Proposition \ref{C:RmcR}, which is a special case of Theorem \ref{RmcR}, which in turn is Theorem 4 in \cite{Dr1}. Whereas Proposition \ref{C:RmcR} is valid when $\mfg$ is an orthogonal or symplectic Lie algebra, Theorem \ref{RmcR} is valid for any simple Lie algebra $\mfg$. In Subsection \ref{indapp}, a more direct and elementary proof, bypassing the  use of the universal $R$-matrix, is provided of the isomorphism between the $J$ and $RTT$ presentations: see Proposition \ref{T:J->R}. This makes use of \cite{Le2} via Proposition \ref{L:J->R}. 

In the last section, we prove Theorem \ref{T:cr-R} which establishes a correspondence between the two families of Drinfeld polynomials originating from the two equivalent classifications of finite-dimensional irreducible representations of the Yangian. Furthermore, we explain how a highest weight vector remains a highest weight vector after pulling back the action of the Yangian via the isomorphism \eqref{J->R}.  

Yangians and quantum affine algebras are known to be intimately connected: see for instance \cite{Dr2, GuMa, GTL1,GTL2, GTL3}.  It should thus be possible to obtain analogous results for quantum affine algebras: for instance, a complete proof of the equivalence of the original definition (see Section 4 in \cite{Dr1}) with Drinfeld's loop realization (given in \cite{Dr3}) was provided recently in \cite{Da1,Da2}. 

{\it Acknowledgements.} We thank V. Toledano Laredo for some helpful pointers and H. Nakajima for indicating that the results in Section A.8 of \cite{GTL1} can be used to prove Proposition \ref{grYzeta} below. The first and third named authors gratefully acknowledge the financial support of the Natural Sciences and Engineering Research Council of Canada provided via the Discovery Grant Program and the Alexander Graham Bell Canada Graduate Scholarships - Doctoral Program, respectively. Part of this work was done during the second named author's visits to the University of Alberta; he thanks the University of Alberta for the hospitality. The second named author was supported in part by the Engineering and Physical Sciences Research Council of the United Kingdom, grant number EP/K031805/1; he gratefully acknowledges the financial support.


\section{\texorpdfstring{$J$}{J} and current presentations of the Yangian}\label{sec:Jcur}


\subsection{\texorpdfstring{$J$}{J}-presentation} \label{sec:Jpres}

As our starting point we recall the definition of the Yangian as it was first introduced by Drinfeld in \cite{Dr1} and is now conveniently called Drinfeld's first or $J$-presentation of the Yangian. Let $\mfg$ be a complex simple Lie algebra and $(\cdot,\cdot)$ be an invariant, non-degenerate symmetric bilinear form on $\mfg$. We set $\mfg[s] = \mfg\ot \C[s]$, the polynomial current Lie algebra of $\mfg$. We will assume $\hbar$ to be a formal variable. Tensor products $\ot$ and spaces of homomorphisms (including $\Hom$ and $\End$) will be over $\C$, unless specified otherwise. Lastly, $\mfS_m$ will denote the symmetric group on the set $\{1,\ldots,m\}$.

\begin{definition} \cite{Dr1}  \label{D:Y(g)-DI} 
Let $\{ X_{\la} \}_{\la\in\Lambda}$ be an orthonormal basis of $\mfg$. The Yangian $Y_\hbar(\mfg)$ is the unital associative $\C[\hbar]$-algebra generated by elements $X$ and $J(X)$ $\forall\, X \in \mfg$ with the following defining relations:
\eqa{
& X_1 X_2 - X_2 X_1 = [X_1,X_2] \in \mfg , \qu [X_1,J(X_2)] = J([X_1,X_2]), \label{J0} \\
& J(aX_1+bX_2) = aJ(X_1) + bJ(X_2), \label{J1} \\
& [J(X_1),[J(X_2),X_3]] - [X_1,[J(X_2),J(X_3)]] = \hbar^2 \msum_{\la,\mu,\nu} \al_{\la\mu\nu} \{ X_\la, X_\mu, X_\nu \}, \label{J2}
}
where $\al_{\la\mu\nu} = ([X_1,X_{\la}],[[X_2,X_{\mu}],[X_3,X_{\nu}]])$ and $\{x_1,x_2,x_3\} = \tfrac1{24} \sum_{\si\in\mfS_3} x_{\si(1)}x_{\si(2)}x_{\si(3)}$. 
When $\mfg=\mfsl_2$, relation \eqref{J2} should be replaced by the following one: 
\eq{ 
[[J(X_1),J(X_2)],[X_3,J(X_4)]] + [[J(X_3),J(X_4)],[X_1,J(X_2)]] = \hbar^2 \msum_{\la,\mu,\nu} \beta_{\la\mu\nu}  \{ X_\la, X_\mu, J(X_\nu) \}, \label{J3}
}
where $\beta_{\la\mu\nu} = ([X_1,X_{\la}],[[X_2,X_{\mu}],[[X_3,X_4],X_{\nu}]]) + ([X_3,X_{\la}],[[X_4,X_{\mu}],[[X_1,X_2],X_{\nu}]])$. The Hopf algebra structure of $Y_\hbar(\mfg)$ is given by: 
\eqa{
& \Delta(X) = X \ot_{\C[\hbar]} 1 + 1 \ot_{\C[\hbar]} X, \qu
\Delta(J(X)) = J(X) \ot_{\C[\hbar]} 1 + 1 \ot_{\C[\hbar]} J(X) +  \tfrac12 \hbar [X \ot_{\C[\hbar]} 1, \Omega], \notag \\
& S(X)=-X, \qu S(J(X))=-J(X) + \tfrac14 \hbar \,\mathsf{c}\, X, \qu \veps(X)=\veps(J(X))=0, \notag
}
where $\mathsf{c}$ is the eigenvalue of the quadratic Casimir element $\Omega$ in the adjoint representation of $\mfg$. 
\end{definition}

The Yangian $Y_\hbar(\mfg)$ enjoys the following properties: 
\begin{itemize}
\item  It is a free graded $\C[\hbar]$-algebra, with grading induced by the degree assignment $\deg X=0$ and $\deg J(X)=1$ for all $X\in \mfg$, together with $\deg \hbar=1$; 

\item It has a one-parameter group of Hopf algebra automorphisms $\tau_z$ given by
\begin{equation}
\tau_z(X) = X \;\text{ and }\; \tau_z(J(X)) =  J(X) + z\hbar X \;\text{ for all }\; X\in\mfg \;\text{ and }\; z\in\C. \label{isotau}
\end{equation}

\item  Let $\mf{Ug}[s]$ denote the universal enveloping algebra of $\mfg[s]$. There is a natural isomorphism of Hopf algebras $\mf{Ug}[s]\to Y_\hbar(\mfg)/\hbar Y_\hbar(\mfg)$ which sends $\mfg$ identically to its image in $Y_\hbar(\mfg)/\hbar Y_\hbar(\mfg)$, and $X\ot s$ to $J_0(X)$ for each $X\in \mfg$, where $J_0(X)$ is the image of $J(X)$ in the quotient $Y_\hbar(\mfg)/\hbar Y_\hbar(\mfg)$. (See \cite{Dr1} and Proposition 3.1 in \cite{BeRe}.)
\end{itemize}

Let $\zeta\in\C$ and denote by $Y_{\zeta}(\mfg)$ the Hopf algebra $Y_\hbar(\mfg) / (\hbar-\zeta)Y_\hbar(\mfg)$. We will denote the generators of $Y_\zeta(\mfg)$ again by $J(X)$ and $X$, except in the rare instances where additional emphasis is required on the parameter $\zeta$. In such a case, we shall replace $J(X)$ by $J_\zeta(X)$.
The grading on $Y_\hbar(\mfg)$ induces a filtrating on $Y_{\zeta}(\mfg)$ defined as follows: set $\deg J(X)=1$ and $\deg X=0$ for all $X\in \mfg$, and let $F_k Y_{\zeta}(\mfg)$ denote the subspace of $Y_{\zeta}(\mfg)$ spanned by monomials of degree less than or equal to $k$. Note that for any two nonzero complex numbers $\zeta_1,\zeta_2\in \C^{\times}$, the assignment 
\begin{equation}
 X\mapsto X, \quad J_{\zeta_1}(X)\mapsto \tfrac{\zeta_1}{\zeta_2} J_{\zeta_2}(X) \; \text{ for all }X\in \mfg \label{z1z2iso}
\end{equation}
extends to an isomorphism of filtered Hopf algebras.

\begin{proposition}\label{grYzeta}
Let $\gr Y_{\zeta}(\mfg)$ denote the associated graded algebra of $Y_{\zeta}(\mfg)$ with respect to the filtration $\{F_k Y_{\zeta}(\mfg)\}$. Then $\gr Y_{\zeta}(\mfg)$
 is isomorphic to $\mf{Ug}[s]$. 
\end{proposition}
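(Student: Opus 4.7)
The plan is to leverage the two properties of $Y_\hbar(\mfg)$ asserted in the bulleted list immediately preceding the proposition: that $Y_\hbar(\mfg)$ is a free graded $\C[\hbar]$-algebra (with $\deg\hbar=1$), and that $Y_\hbar(\mfg)/\hbar Y_\hbar(\mfg)$ is isomorphic to $\mf{Ug}[s]$. The proposition is then an instance of a standard Rees-type argument: for any $\N$-graded $\C[\hbar]$-algebra $A$ with $\deg\hbar=1$ which is free as a $\C[\hbar]$-module, and for any $\zeta\in\C$, the associated graded of $A/(\hbar-\zeta)A$ with respect to the induced filtration is isomorphic to $A/\hbar A$ as graded algebras.

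Concretely, writing $\pi_\zeta:Y_\hbar(\mfg)\to Y_\zeta(\mfg)$ for the quotient map and $(Y_\hbar)_k$ for the degree-$k$ homogeneous component of $Y_\hbar(\mfg)$, I first observe that $\pi_\zeta$ sends $(Y_\hbar)_k$ into $F_kY_\zeta(\mfg)$ and carries $\hbar(Y_\hbar)_{k-1}$ into $\zeta F_{k-1}Y_\zeta(\mfg)\subset F_{k-1}Y_\zeta(\mfg)$. Assembling these in each degree produces a surjective graded algebra homomorphism
\[
\Phi:\; Y_\hbar(\mfg)/\hbar Y_\hbar(\mfg) \onto \gr Y_\zeta(\mfg),
\]
where surjectivity is immediate from the definition of the filtration $\{F_kY_\zeta(\mfg)\}$. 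To establish injectivity in degree $k$, take $a\in(Y_\hbar)_k$ whose image lies in $F_{k-1}Y_\zeta(\mfg)$, so that $a-b=(\hbar-\zeta)c$ for some $b\in\bigoplus_{j\leq k-1}(Y_\hbar)_j$ and $c=\sum_j c_j\in Y_\hbar(\mfg)$ with $c_j\in(Y_\hbar)_j$; the goal is to show $a\in\hbar(Y_\hbar)_{k-1}$. Matching homogeneous components of degree $j>k$ yields $\hbar c_{j-1}=\zeta c_j$, so inductively $c_{k+m}=(\hbar/\zeta)^m c_k$ for all $m\geq 1$. Since $c$ is a finite sum, $\hbar^m c_k$ must vanish for $m$ large, and the freeness of $Y_\hbar(\mfg)$ over $\C[\hbar]$ (in particular that $\hbar$ is a non-zerodivisor) then forces $c_k=0$. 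The degree-$k$ component of the defining equation becomes $a=\hbar c_{k-1}\in\hbar(Y_\hbar)_{k-1}$, as desired.

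Composing $\Phi$ with the isomorphism $\mf{Ug}[s]\iso Y_\hbar(\mfg)/\hbar Y_\hbar(\mfg)$ supplied by the third bullet above yields the stated isomorphism $\gr Y_\zeta(\mfg)\cong\mf{Ug}[s]$. The argument is uniform in $\zeta$: for $\zeta\in\C^\times$ one could alternatively reduce to a single value of $\zeta$ via \eqref{z1z2iso}, and for $\zeta=0$ the assertion is tautological since $Y_0(\mfg)=\mf{Ug}[s]$ already carries a genuine grading coinciding with the induced filtration. The main substantive input is the freeness of $Y_\hbar(\mfg)$ as a $\C[\hbar]$-module, cited from the first bullet point; everything else is bookkeeping, and the real content of the proposition resides in that freeness statement.
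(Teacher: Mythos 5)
Your Rees-type reduction is formally correct: the degree-matching argument showing that the kernel of $(Y_\hbar)_k\to F_kY_\zeta(\mfg)/F_{k-1}Y_\zeta(\mfg)$ is exactly $\hbar(Y_\hbar)_{k-1}$ is sound, and it does prove the proposition \emph{conditionally on} $Y_\hbar(\mfg)$ having no $\hbar$-torsion. The problem is that this hypothesis is precisely the nontrivial content. You acknowledge as much in your last sentence, but that concession is fatal here: the $\C[\hbar]$-freeness of $Y_\hbar(\mfg)$ in the $J$-presentation is a PBW/flatness statement of exactly the same depth as the proposition (given the third bullet, the two are essentially equivalent), it is stated in the bulleted list without proof or citation, and the paper's explicit reason for including a proof of this proposition is that no proof of such statements for the $J$-presentation had appeared in the literature. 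A graded $\C[\hbar]$-algebra cut out by homogeneous relations can perfectly well have $\hbar$-torsion, so nothing about the presentation gives the freeness for free. Citing the bullet point therefore begs the question.

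The paper's actual proof goes the other way around and assumes only the third bullet (the isomorphism $Y_\hbar(\mfg)/\hbar Y_\hbar(\mfg)\cong\mf{Ug}[s]$, which does have a published proof in \cite{BeRe}). It forms the Rees algebra of $Y_{\zeta=1}(\mfg)$ with respect to the filtration, maps $Y_\hbar(\mfg)$ onto it via $X\mapsto\msX$, $J(X)\mapsto\hbar\msJ(\msX)$, and shows the kernel vanishes: the quotient $K=\Ker\Phi_\hbar/\hbar\Ker\Phi_\hbar$ embeds in $\mf{Ug}[s]$ as a co-Poisson Hopf ideal, and by \cite[Corollary A.9]{GTL1} the only such ideals are $\{0\}$ and $\mf{Ug}[s]$, the latter being excluded because $\mr{Rees}\,Y(\mfg)|_{\hbar=0}\neq 0$. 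Torsion-freeness of $\mr{Rees}\,Y(\mfg)$ is manifest (it sits inside $Y(\mfg)[\hbar]$), and the freeness of $Y_\hbar(\mfg)$ then falls out as a corollary of $\Phi_\hbar$ being an isomorphism rather than being an input. To repair your argument you would need to supply an independent proof of the first bullet, and the natural one is exactly this co-Poisson rigidity argument.
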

Although this result is certainly well known (see \cite{Le1} for instance), a proof has never appeared in the literature for the Yangian in Drinfeld's $J$-presentation. Since this result is significant and will be used throughout our paper, we include a complete proof. It cannot be deduced directly from the main result in \cite{Le1} because Proposition \ref{grYzeta} is needed to obtain an isomorphism between Drinfeld's $J$-presentation and the one used in \cite{Le1} (see Theorem \ref{T:Ycr(g)-} below).

\begin{proof}
The argument which we present was observed by H. Nakajima in an unpublished note related to \cite{GNW}. It closely follows the techniques of \cite[A.7-A.9]{GTL1}, which have been attributed to Drinfeld. 

This is immediate for $\zeta=0$, so let us assume that $\zeta\neq 0$. Since $Y_{\zeta_1}(\mfg)\cong Y_{\zeta_2}(\mfg)$ as filtered algebras for any $\zeta_1,\zeta_2\in \C^\times$, it is enough to prove the proposition for the Yangian $Y(\mfg)=Y_{\zeta=1}(\mfg)$. 
In order to distinguish between generators of $Y_\hbar(\mfg)$ and generators of $Y(\mfg)$, which both play a role in our argument,  we will henceforth denote those of the latter algebra by $\msX$ and $\msJ(\msX)$, for all $\msX\in \mfg$.

Let $\mr{Rees}\,Y(\mfg)$ denote the Rees algebra corresponding to the filtrating $\{F_k Y(\mfg)\}_{k=0}^{\infty}$:
\begin{equation*}
 \mr{Rees}\,Y(\mfg)=\bigoplus_{k=0}^\infty \hbar^k F_k Y(\mfg) \subset Y(\mfg)[\hbar].
\end{equation*}
We have $\mr{Rees}\,Y(\mfg)|_{\hbar=0}\cong \gr Y(\g)$ and $\mr{Rees}\,Y(\mfg)|_{\hbar=1}\cong Y(\mfg)$. 

There is a natural homomorphism $\Phi_\hbar$ of graded $\C[\hbar]$-algebras given by 
\begin{equation}
 \Phi_\hbar: Y_\hbar(\mfg)\to \mr{Rees}\,Y(\mfg),\quad \Phi_\hbar(X)=\msX, \; \Phi_\hbar(J(X))=\hbar\msJ(\msX) \; \text{ for all }\; X\in \mfg. \label{PhiRees}
\end{equation}
Since $\mr{Rees}\,Y(\mfg)$ is generated by $\{\msX,\hbar\msJ(\msX)\}_{\msX\in \mfg}$, $\Phi_\hbar$ is surjective. Our goal is to show that $\Phi_\hbar$ 
is in fact an isomorphism, which will imply the proposition. 

By the same argument as given in the proof of Proposition A.8 of \cite{GTL1}, to show that $\Ker\,\Phi_\hbar$ is trivial, 
it is enough to show that the quotient $K=\Ker\,\Phi_\hbar/\hbar\Ker\,\Phi_\hbar$ is zero. In addition, $K$ embeds into $Y_\hbar(\mfg)/\hbar Y_\hbar(\mfg)$ ($=\mf{Ug}[s]$). For completeness, we take a moment to recall why these two results are true.  If $K$ is trivial, then 
$\Ker\,\Phi_\hbar=\hbar^k\Ker\,\Phi_\hbar$ for all $k\in \Z_{\geq 0}$, hence $\Ker\,\Phi_\hbar=\cap_{k\geq 0}\hbar^k \Ker\,\Phi_\hbar\subset \cap_{k\geq 0}\hbar^k Y_\hbar(\mfg)=\{0\}$, since $Y_\hbar(\mfg)$ is separated. To show $K$ embeds into $Y_\hbar(\mfg)/\hbar Y_\hbar(\mfg)$, note that if $\hbar x \in \hbar Y_\hbar(\mfg)\cap \Ker\,\Phi_\hbar$ then $\hbar\Phi_h(x)=0$. In $\mr{Rees}\,Y(\mfg)\subset Y(\mfg)[\hbar]$ this implies that $\Phi_h(x)=0$; therefore $h\Ker\,\Phi_\hbar=\hbar Y_\hbar(\mfg)\cap \Ker\,\Phi_\hbar$, so $K$ is contained in $Y_\hbar(\mfg)/\hbar Y_\hbar(\mfg)$. 

$\mr{Rees}\,Y(\mfg)$ is equipped with a natural $\C[\hbar]$-bialgebra structure, in particular it has a coproduct also denoted $\Delta$ such that $(\Phi_\hbar\ot_{\C[\hbar]}\Phi_\hbar)\circ \Delta = \Delta \circ \Phi_\hbar$. Hence, $\Ker\,\Phi_\hbar$ is a coideal subalgebra of $Y_\hbar(\mfg)$: 
\begin{equation*}
 \Delta(\Ker\,\Phi_\hbar)\subset Y_\hbar(\mfg)\ot_{\C[\hbar]} \Ker\,\Phi_\hbar + \Ker\,\Phi_\hbar\ot_{\C[\hbar]}  Y_\hbar(\mfg). 
\end{equation*}
This implies that $K$ is a co-Poisson Hopf ideal of $\mf{Ug}[s]$. By \cite[Corollary A.9]{GTL1}, the only co-Poisson Hopf ideals of  $\mf{Ug}[s]$ are 
$\{0\}$ and $\mf{Ug}[s]$. Suppose that  $K=\mf{Ug}[s]$ and consider the composition $\mr{P}_\hbar\circ\Phi_\hbar: Y_\hbar(\mfg)\onto \mr{Rees}\,Y(\mfg)|_{\hbar=0}$, where $\mr{P}_\hbar$ is the natural quotient map $\mr{Rees}\,Y(\mfg)\onto\mr{Rees}\,Y(\mfg)|_{\hbar=0}$. Since the two-sided ideal $\hbar Y_\hbar(\mfg)+\Ker\,\Phi_\hbar$ is contained in the kernel of $\mr{P}_\hbar\circ\Phi_\hbar$, we obtain a surjective homomorphism 
$Y_\hbar(\mfg)/(\hbar Y_\hbar(\mfg)+\Ker\,\Phi_\hbar)\onto \mr{Rees}\,Y(\mfg)|_{\hbar=0}$. However, $K=\Ker\,\Phi_\hbar/\hbar \Ker\,\Phi_\hbar \cong \mf{Ug}[s]$ implies that 
\[ 
\faktor{ Y_\hbar(\mfg)}{(\hbar  Y_\hbar(\mfg)+\Ker\,\Phi_\hbar)}\cong \left(\faktor{ Y_\hbar(\mfg)}{\hbar Y_\hbar(\mfg)}\right) \Big/ \left(\faktor{(\hbar Y_\hbar(\mfg)+\Ker\,\Phi_\hbar)}{\hbar Y_\hbar(\mfg)}\right)\cong \left(\faktor{ Y_\hbar(\mfg)}{\hbar Y_\hbar(\mfg)}\right) \Big/ K=\{0\}, 
\]
and hence that $\gr Y(\mfg)\cong \mr{Rees}\,Y(\mfg)|_{\hbar=0}=\{0\}$. This is a contradiction since, for instance, $F_0Y(\mfg)$ embeds into 
$\gr Y(\mfg)$ and is nonzero. Therefore we must have $K=\{0\}$, and hence $\Phi_\hbar$ is an isomorphism of graded $\C[\hbar]$-algebras.

Since $\Phi_\hbar$ sends the ideal $\hbar Y_{\hbar}(\mfg)$ isomorphically onto $\hbar\mr{Rees}\,Y(\mfg)$, we obtain the sequence of isomorphisms 
\begin{equation*}
 \mf{Ug}[s]\cong Y_{\hbar}(\mfg)/\hbar Y_{\hbar}(\mfg)\cong \mr{Rees}\,Y(\mfg)|_{\hbar=0}\cong \gr Y(\mfg). \qedhere
\end{equation*}
\end{proof}

It can be deduced from \eqref{z1z2iso} and \eqref{PhiRees} that, under the isomorphism between $\gr Y_{\zeta}(\mfg)$ and $\mf{Ug}[s]$, $\ol{J(X)}$ corresponds to $\zeta X\ot s$ where $\ol{J(X)}$ is the image of $J(X)$ in the quotient $F_1 \, Y_{\zeta}(\mfg) /F_0\, Y_{\zeta}(\mfg)$. 

We end this subsection with a simple lemma which will play a role in the proof of Theorem \ref{T:Ycr(g)-} as well as in Sections \ref{sec:sosp-Y} and \ref{sec:rep}. 
\begin{lemma}\label{L:ext-aut}
 Let $\phi$ be an automorphism of the enveloping algebra $\mfU\mfg$. Then the assignment 
 \begin{equation}
 \quad X\mapsto \phi(X), \quad J(X)\mapsto J(\phi(X)) \quad \text{ for all }\; X\in \mfg \label{ext-aut}
 \end{equation}
extends uniquely to an automorphism $\bar \phi$ of $Y_\zeta(\mfg)$. 
\end{lemma}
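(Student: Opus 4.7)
The plan is to verify that the assignment \eqref{ext-aut} respects each of the defining relations of $Y_\zeta(\mfg)$ recorded in Definition \ref{D:Y(g)-DI}, thereby producing an algebra endomorphism $\bar\phi$ of $Y_\zeta(\mfg)$. First, I note that for $J(\phi(X))$ to make sense, $\phi$ must restrict to a Lie algebra automorphism of $\mfg$, and any such restriction automatically preserves the invariant form $(\cdot,\cdot)$: the Killing form is canonically preserved by any Lie algebra automorphism since $\ad(\phi(X))=\phi\circ\ad(X)\circ\phi^{-1}$ as endomorphisms of $\mfg$, and on the simple Lie algebra $\mfg$ every nonzero invariant symmetric bilinear form is a scalar multiple of the Killing form. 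In particular, $\{\phi(X_\la)\}_{\la\in\Lambda}$ remains an orthonormal basis of $\mfg$.

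With this in hand, the bracket identity in \eqref{J0}, the linearity \eqref{J1}, and the compatibility $[\phi(X_1),J(\phi(X_2))]=J(\phi([X_1,X_2]))$ are immediate instances of the original relations, so the only substantive point is \eqref{J2}. Under \eqref{ext-aut} this relation transforms into the identity
\eqn{
[J(\phi(X_1)),[J(\phi(X_2)),\phi(X_3)]] - [\phi(X_1),[J(\phi(X_2)),J(\phi(X_3))]] = \hbar^2 \msum_{\la,\mu,\nu} \al_{\la\mu\nu}\{\phi(X_\la),\phi(X_\mu),\phi(X_\nu)\},
}
which I would establish by invoking \eqref{J2} directly for the triple $(\phi(X_1),\phi(X_2),\phi(X_3))$, but summed against the orthonormal basis $\{\phi(X_\la)\}_\la$ in place of $\{X_\la\}_\la$. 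The key observation making this substitution legitimate is that the right-hand side of \eqref{J2} is a manifestly basis-independent quantity, as a routine change-of-basis calculation with an orthogonal matrix confirms that $\msum_{\la,\mu,\nu}\al_{\la\mu\nu}\,X_\la\ot X_\mu\ot X_\nu$ does not depend on the choice of orthonormal basis. The resulting coefficient $([\phi(X_1),\phi(X_\la)],[[\phi(X_2),\phi(X_\mu)],[\phi(X_3),\phi(X_\nu)]])$ then collapses to $\al_{\la\mu\nu}$ after pulling $\phi$ out of each bracket and invoking the invariance of $(\cdot,\cdot)$ under $\phi$. The $\mfsl_2$-relation \eqref{J3} is handled identically, with $\beta_{\la\mu\nu}$ in place of $\al_{\la\mu\nu}$.

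Uniqueness of $\bar\phi$ is automatic because $\{X,J(X):X\in\mfg\}$ generates $Y_\zeta(\mfg)$ as an algebra. To conclude that $\bar\phi$ is in fact an automorphism, I would apply the same construction to $\phi^{-1}$ to obtain an endomorphism $\overline{\phi^{-1}}$, and observe that both compositions $\bar\phi\circ\overline{\phi^{-1}}$ and $\overline{\phi^{-1}}\circ\bar\phi$ act as the identity on the generators, hence on all of $Y_\zeta(\mfg)$. The only step with any real content is the verification of \eqref{J2}, and the main obstacle is essentially notational: one needs to recognize that its right-hand side is a basis-independent tensorial expression, after which invariance of $(\cdot,\cdot)$ under $\phi$ completes the argument.
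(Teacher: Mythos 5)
Your proposal is correct and follows essentially the same route as the paper's proof: the relations \eqref{J0} and \eqref{J1} are immediate, and \eqref{J2}--\eqref{J3} are handled by the two observations that their right-hand sides are independent of the choice of orthonormal basis and that $(\cdot,\cdot)$ is preserved by any automorphism of $\mfg$ (via the Killing form). The added details on the change-of-basis computation and the bijectivity of $\bar\phi$ via $\overline{\phi^{-1}}$ are exactly the routine points the paper leaves implicit.
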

\begin{proof}
  It is straightforward to see that the assignment \eqref{ext-aut} preserves the defining relations \eqref{J0} and 
\eqref{J1}. The same is true for the relations \eqref{J2} and \eqref{J3}. To see this, first observe that the right-hand sides of both of these relations are independent of the choice of orthonormal basis $\{X_\lambda\}_{\lambda \in \Lambda}$. The desired conclusion is then obtained from the second observation that, since the Killing form $B(\cdot,\cdot)$ of $\mfg$ is invariant under automorphisms of $\mfg$, the same is true for the form $(\cdot,\cdot)$. 
\end{proof}
%


\subsection{Current presentation}\label{sec:curpres}


We now focus our attention on what is often called Drinfeld's new or second presentation of the Yangian of $\mfg$. We will call it the current presentation of the Yangian and denote it by $Y_\hbar^\cur(\mfg)$. Let $\mcI$ be an indexing set for the simple roots $\al_i$ of $\mfg$,  $C=(c_{ij})_{i,j\in\mcI}$ denote the Cartan matrix of $\mfg$ and $\Delta^{+}$ denote the set of positive roots. We normalize the standard Chevalley generators $x_i^{\pm},h_i$ of the Lie algebra $\mfg$ such that $(x_i^+,x_i^-)=1$ and $h_i = [x_i^+,x_i^-]$.
\begin{definition}[\cite{Dr3}, Theorem 1]\label{D:Ycr(g)-}
Let $\zeta\in\C$. Set $\{x_1,x_2\}= x_1 x_2 + x_2 x_1$. The Yangian $Y_\zeta^\cur(\mfg)$ is the unital associative $\C$-algebra  generated by elements $x_{ir}^{\pm}$, $h_{ir}$ with $i\in \mcI$ and $r\ge 0$, subject to the following relations, for $i,j\in \mcI$ and $r,s\ge 0$:
\begin{gather}\label{Ycr(g)-1} 
[h_{ir},h_{js}]=0,\qu
[h_{i0},x_{js}^{\pm}]=\pm (\al_i,\al_j) \,x_{js}^{\pm},\qu  
[x_{ir}^{+},x_{js}^{-}]=\delta_{ij}\,h_{i,r+s} , \\
\label{Ycr(g)-2} 
[h_{i,r+1},x_{js}^{\pm}]-[h_{ir},x_{j,s+1}^{\pm}] = \pm\tfrac{1}{2}\zeta\, 
(\al_i,\al_j) \,\big\{ h_{ir}, x_{js}^{\pm} \big\} , \\
\label{Ycr(g)-3}  
[x_{i,r+1}^{\pm},x_{js}^{\pm}]-
[x_{ir}^{\pm},x_{j,s+1}^{\pm}]=\pm\tfrac{1}{2}\zeta\, 
(\al_i,\al_j) \,\big\{ x_{ir}^{\pm} , x_{js}^{\pm} \big\} , \\
\label{Ycr(g)-4}
 \msum_{\si\in \mfS_{m}} \Big[ 
x_{i,r_{\si(1)}}^{\pm},\big[x_{i,r_{\si(2)}}^{\pm},\ldots,[x_{i,r_{\si(m)}}^{\pm
},x_{js}^{\pm}]\ldots\big]\Big]=0 
\qu\text{for}\qu m=1-c_{ij},\;\;i\ne j,\;\; r_1,\ldots,r_m\ge 0 . 
\end{gather}
\end{definition}
\begin{theorem}[\cite{Le2}, Theorem 1.2 and \cite{GNW}, Theorem 2.13] \label{smallerset}
The algebra $Y_\zeta^\cur(\mfg)$ is generated by its elements $x_{ir}^{\pm},h_{ir}$ with $i\in\mcI$ and $r=0,1$ and is isomorphic to the associative $\C$-algebra generated by these elements subject to the following relations for $i,j\in \mcI$ and $r,s=0,1$:
\begin{gather}
[h_{ir},h_{js}]=0, \qu [h_{i0},x^\pm_{j0}]=\pm(\al_i,\al_j)\,x^{\pm}_{j0}, \qu  [\tl{h}_{i1},x^\pm_{j0}]=\pm(\al_i,\al_j)\,x^{\pm}_{j1} \;\text{ where }\; \tl{h}_{i1} = h_{i1} - \tfrac12\zeta \, h_{i0}^2 , \label{Le1} 
\\
[x^+_{ir},x^-_{j0}]=\del_{ij} h_{ir}, \qu
[x^\pm_{i1},x^\pm_{j0}] = [x^\pm_{i0},x^\pm_{j1}] \pm \tfrac12\zeta \,(\al_i,\al_j)\, \big\{x^\pm_{i0} , x^\pm_{j0} \big\}, \label{Le2} 
\\
1-c_{ij}\;\text{ commutators }\; \Big[ x^\pm_{i0}, \big[ x^\pm_{i0},\ldots,[x^\pm_{i0},x^\pm_{j0}]\ldots\big]\Big] =0 \;\text{ for }\; i\ne j. \label{Le3}
\end{gather}
When $\mfg=\mfsl_2$, we should also include $ \big[[\tl{h}_{i1},x^+_{i1}],x^-_{i1}\big] + \big[x^+_{i1},[\tl{h}_{i1},x^-_{i1}]\big] = 0$.
The isomorphism is given by the map: $x^\pm_{ir} \mapsto x^\pm_{ir}$ and $h_{ir} \mapsto h_{ir}$ for $r\ge0$, where $x^\pm_{i,r+1} = \pm (\al_i,\al_i)^{-1}\, [\tl{h}_{i1},x^\pm_{ir}]$ and $h_{ir} = [x^+_{ir},x^-_{i0}]$\; are defined recursively for $r\ge 1$.
\end{theorem}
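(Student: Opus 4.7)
The plan is to construct mutually inverse homomorphisms between $Y^\cur_\zeta(\mfg)$ and the algebra $Y'$ defined by the reduced generators and relations \eqref{Le1}--\eqref{Le3} (together with the extra $\mfsl_2$ relation when applicable). The direction $\psi\colon Y'\to Y^\cur_\zeta(\mfg)$ sending each generator to itself is the easier one: all the Levendorskii relations are consequences of the defining relations of $Y^\cur_\zeta(\mfg)$. The only nontrivial verification is $[\tl h_{i1},x^\pm_{j0}] = \pm(\al_i,\al_j)\,x^\pm_{j1}$, which follows from \eqref{Ycr(g)-2} with $r=s=0$: after using $[h_{i0},x_{j1}^\pm]=\pm(\al_i,\al_j)x_{j1}^\pm$ from \eqref{Ycr(g)-1}, one finds $[h_{i1},x_{j0}^\pm]=\pm(\al_i,\al_j)x_{j1}^\pm\pm\tfrac12\zeta(\al_i,\al_j)\{h_{i0},x_{j0}^\pm\}$, and then the correction term $-\tfrac12\zeta[h_{i0}^2,x_{j0}^\pm]$ hidden inside $\tl h_{i1}$ exactly cancels the second summand.

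For surjectivity of $\psi$, which is equivalent to the generation claim, the same computation applied with $j=i$ gives $x_{i,r+1}^\pm=\pm(\al_i,\al_i)^{-1}[\tl h_{i1},x_{ir}^\pm]$ in $Y^\cur_\zeta(\mfg)$ for every $r\ge 0$ (this comes directly from \eqref{Ycr(g)-2}, no further induction needed), while $h_{ir}=[x_{ir}^+,x_{i0}^-]$ is immediate from \eqref{Ycr(g)-1}. Iterating recovers all $x_{ir}^\pm$ and $h_{ir}$ with $r\ge 2$ from the degree $\le 1$ generators.

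The inverse $\phi\colon Y^\cur_\zeta(\mfg)\to Y'$ is obtained by defining higher-level generators in $Y'$ via the same recursions and then checking that all the defining relations \eqref{Ycr(g)-1}--\eqref{Ycr(g)-4} hold. Relations involving only indices $r,s\le 1$ are built into $Y'$; the higher-degree cases of \eqref{Ycr(g)-1}--\eqref{Ycr(g)-3} can be obtained by inducting on $r+s$ and applying $\ad(\tl h_{i1})$ to previously proved relations, carefully tracking the deformation correction terms. The main obstacle is the quantum Serre relations \eqref{Ycr(g)-4} with arbitrary $r_1,\ldots,r_m,s\ge 0$: starting from the level-zero case \eqref{Le3}, one must raise the indices one at a time using $\ad(\tl h_{j1})$ or $\ad(\tl h_{i1})$ and verify that the correction terms produced by \eqref{Ycr(g)-3} combine correctly. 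A cleaner way to organize the argument is via the filtration on $Y'$ with $\deg x_{ir}^\pm = \deg h_{ir} = r$: one shows that $\gr Y'$ receives a surjection from $U(\mfg[t])$, and combining this with the PBW-type theorem for $Y^\cur_\zeta(\mfg)$ (Levendorskii, in the spirit of Proposition \ref{grYzeta}) together with the surjectivity of $\psi$, the filtered dimensions match and $\psi$ is forced to be an isomorphism. For $\mfg=\mfsl_2$ the extra relation stated in the theorem plays the role of \eqref{Le3} in this induction.
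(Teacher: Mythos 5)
First, a remark on the comparison itself: the paper does not prove Theorem \ref{smallerset} — it imports it from \cite{Le2} and \cite{GNW} — so there is no in-paper argument to measure yours against. Judged on its own terms, your write-up correctly handles the easy half. The check that $[\tl h_{i1},x_{j0}^\pm]=\pm(\al_i,\al_j)\,x_{j1}^\pm$ holds in $Y_\zeta^\cur(\mfg)$, via \eqref{Ycr(g)-2} with $r=s=0$ together with $[h_{i0}^2,x_{j0}^\pm]=\pm(\al_i,\al_j)\{h_{i0},x_{j0}^\pm\}$, is right; the same computation with $j=i$ and general second index gives $[\tl h_{i1},x_{ir}^\pm]=\pm(\al_i,\al_i)\,x_{i,r+1}^\pm$, so $\psi$ is a well-defined surjection and the generation claim follows.

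The gap is in the injectivity, i.e., in constructing the inverse $\phi$. You correctly identify that verifying \eqref{Ycr(g)-1}--\eqref{Ycr(g)-4} for the recursively defined elements of $Y'$ is the main obstacle, but the ``cleaner'' filtration argument you substitute for it is circular as stated: the assertion that $\gr Y'$ receives a surjection from $\mf{Ug}[s]$ is not free. To produce such a surjection one must check that the images $\ol{x}_{ir}^\pm,\ol{h}_{ir}$ in $\gr Y'$ satisfy the defining relations of the Serre-type presentation of $\mf{Ug}[s]$ — commutativity of the $\ol{h}_{ir}$, $[\ol{x}_{ir}^+,\ol{x}_{js}^-]=\delta_{ij}\ol{h}_{i,r+s}$, the correct action of $\ol{h}_{ir}$ on $\ol{x}_{js}^\pm$, and the Serre relations at all levels — and these are exactly the relations \eqref{Ycr(g)-1}--\eqref{Ycr(g)-4} modulo lower filtration degree. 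Establishing them requires the same delicate induction on $r+s$ (with $[h_{ir},h_{js}]=0$ and the higher quantum Serre relations being the genuinely hard cases) that occupies essentially all of \cite{Le2} and of Section 2 of \cite{GNW}. So your filtration step correctly reduces ``$\psi$ is an isomorphism'' to ``the current-algebra relations hold in $\gr Y'$'', and the concluding associated-graded argument (as in Step 2 of the proof of Theorem \ref{T:Ycr(g)-}) is fine, but the reduction relocates the computation rather than disposing of it. As written, the hard half of the theorem remains unproved.
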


In \cite{Le1}, an analog of the Poincar\'{e}-Birkhoff-Witt Theorem was proved for $Y_{\zeta}^\cur(\mfg)$. One consequence is that Proposition \ref{grYzeta} also holds for $Y_{\zeta}^\cur(\mfg)$ with respect to the filtration $F_{\bullet}$ on $Y_{\zeta}^\cur(\mfg)$  given by assigning filtration degree $r$ to $x_{ir}^{\pm}$ and $h_{ir}$. The isomorphism $\mf{Ug}[s] \iso \gr Y_{\zeta}^\cur(\mfg)$ sends $x_i^{\pm} \ot s^r$ and $h_i \ot s^r$ to $\zeta^{-r}\ol{x}_{ir}^{\pm}$ and $\zeta^{-r}\ol{h}_{ir}$, respectively, viewed as elements of  $F_r \, Y_{\zeta}^\cur(\mfg) / F_{r-1} \, Y_{\zeta}^\cur(\mfg)$. In particular, we have an embedding $\mfU\mfg \into Y_{\zeta}^\cur(\mfg)$ which sends $h_i$ to $h_{i0}$ and $x_i^{\pm}$ to $x_{i0}^{\pm}$. Consequently, we will sometimes write $h_i$ and $x_i^{\pm}$ instead of $h_{i0}$ and $x_{i0}^{\pm}$.


\subsection{Equivalence between the \texorpdfstring{$J$}{J} and current presentations of the Yangian}\label{sec:equivJcur}

In \cite{Dr2}, it was stated that the algebra $Y_\hbar(\mfg)$ given by Definition \ref{D:Y(g)-DI} is isomorphic to the Yangian $Y_\hbar^\cur(\mfg)$ as presented in Definition \ref{T:Ycr(g)-}. Moreover, an explicit isomorphism was given. We provide in this section a proof of the equivalence of those two definitions which makes use of Theorem \ref{smallerset}. This fills a gap in the literature since, to the best of our knowledge, no such proof has appeared. We start by stating the main result of {\it loc.~cit.} 

For each $\al\in\Delta^+$, let $x_{\al}^{\pm} \in \mfg_{\pm\al}$ be such that $(x_{\al}^+,x_{\al}^-) = 1$ and that $x_{\al_i}^{\pm} = x_{i0}^{\pm}$. We may write $x_i^{\pm}$ instead of $x_{i0}^{\pm}$. 

\begin{theorem}[\cite{Dr3}, Theorem 1]\label{T:Ycr(g)-}
The algebras $Y_\zeta^\cur(\mfg)$ and $Y_{\zeta}(\mfg)$ are isomorphic and an isomorphism $\Phi_{\mr{cr},J}$ between the two realizations is provided by:
\ali{
\Phi_{\mr{cr},J}(h_{i,0}) = {} & h_{i}, \qu && \Phi_{\mr{cr},J}(h_{i,1}) =  J(h_{i}) - \zeta\,v_i, \qu && \text{where} \qu && v_i = \tfrac{1}{4} \msum_{\alpha\in\Delta^+}  (\alpha,\al_i)\, \big\{x^+_\al , x^-_\al  \big\} - \tfrac{1}{2}\, h_i^2 , \notag
\\
\Phi_{\mr{cr},J}(x_{i,0}^\pm) = {} & x^\pm_{i}, && \Phi_{\mr{cr},J}( x_{i,1}^\pm) = J(x_i^\pm) - \zeta\, w_i^{\pm},\qu && \text{where} && w_i^{\pm} = \pm \tfrac{1}{4} \msum_{\al\in\Delta^+} \big\{ [x_i^\pm,x_\al^\pm], x_\al^\mp \big\} - \tfrac{1}{4}\,\big\{x_i^\pm, h_i\big\}. \notag
}
\end{theorem}

For (untwisted and twisted) quantum affine algebras, such a theorem was proved in \cite{Da1,Da2}.

To prove the theorem, we will employ the following lemma which has been proven in \cite{GNW}. Set $\tilde v_i=v_i+\tfrac{1}{2}h_{i}^2$ for all $i\in \mcI$. 
 
\begin{lemma}[\cite{GNW}, Lemma 3.9]\label{lem:GNW}
The following relations hold in the universal enveloping algebra $\mf{U}\mfg$:
\begin{align}
  & [h_{i}, v_j] = 0, \quad
  [h_{i}, w_j^\pm] = \pm(\alpha_i,\alpha_j)\, w_j^\pm, \label{hiwj}
\\
  & [\tilde v_i, x_{j}^\pm] = \pm (\alpha_i,\alpha_j)\, w^\pm_j, \label{vixj}
\\
  & [w^+_i,x^-_{j}] = \delta_{ij} v_i = [x^+_{i}, w^-_j], \label{wixj}
\\
  & [w^\pm_{i},x^\pm_{j}] - [x^\pm_{i}, w^\pm_j]
  = \mp \tfrac12 (\alpha_i,\alpha_j)\,(x^\pm_{i} x^\pm_{j} + x^\pm_{j} x^\pm_{i}). \label{wixjpm} 
  \end{align}
 \end{lemma}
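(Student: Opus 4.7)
My plan is to verify each identity by direct computation in $\mfU\mfg$ using a Chevalley basis, exploiting the $\mfh$-weight decomposition. Throughout, I write $N_{\alpha,\beta}$ for the structure constants $[x_\alpha^+, x_\beta^+] = N_{\alpha,\beta}\,x_{\alpha+\beta}^+$ (when $\alpha+\beta \in \Delta^+$) along with $[x_\alpha^+, x_\alpha^-] = h_\alpha \in \mfh$, and invoke invariance of $(\cdot,\cdot)$ when needed. The weight relations \eqref{hiwj} are immediate: every summand of $v_j$ is $\mfh$-invariant, while each summand of $w_j^\pm$ has weight $\pm\alpha_j$ because $\{[x_j^\pm, x_\alpha^\pm], x_\alpha^\mp\}$ has weight $\pm\alpha_j \pm \alpha \mp \alpha = \pm\alpha_j$, as does $\{x_j^\pm, h_j\}$.

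For the central identity \eqref{vixj}, I would expand
\[
[\tilde v_i, x_j^\pm] = \tfrac{1}{4}\sum_{\alpha\in\Delta^+}(\alpha,\alpha_i)\,\bigl[\{x_\alpha^+, x_\alpha^-\}, x_j^\pm\bigr]
\]
via the Leibniz rule $[ab,c] = a[b,c]+[a,c]b$. The diagonal contribution $\alpha = \alpha_j$ yields a Cartan piece proportional to $\{x_j^\pm, h_j\}$ by way of $[x_j^\pm, x_j^\mp] = \pm h_j$. The off-diagonal contributions split into two families of the form $\{[x_\alpha^\pm, x_j^\pm], x_\alpha^\mp\}$ and $\{[x_\alpha^\mp, x_j^\pm], x_\alpha^\pm\}$; reindexing the second family by $\alpha \mapsto \alpha \mp \alpha_j$ and using the linear identity $(\alpha\pm\alpha_j, \alpha_i) - (\alpha, \alpha_i) = \pm(\alpha_j, \alpha_i)$ together with the antisymmetry $N_{\alpha,\beta}=-N_{\beta,\alpha}$ collapses both families into a single sum matching $\pm(\alpha_i, \alpha_j)\, w_j^\pm$ after extracting a common prefactor.

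For \eqref{wixj}, I would compute $[w_i^+, x_j^-]$ directly from the definition of $w_i^+$. The result has $\mfh$-weight $\alpha_i - \alpha_j$, so for $i \ne j$ all root-space contributions must cancel after repeated application of the Jacobi identity, while for $i = j$ the surviving terms assemble via a Casimir-type identity into $v_i = \tilde v_i - \tfrac{1}{2}h_i^2$. The equality $\delta_{ij}v_i = [x_i^+, w_j^-]$ then follows most efficiently from the Chevalley involution $\omega$, which preserves $v_i$ and interchanges $w_i^+ \leftrightarrow w_i^-$ up to sign. Finally, \eqref{wixjpm} is obtained by an analogous expansion: the difference $[w_i^\pm, x_j^\pm]-[x_i^\pm, w_j^\pm]$ has weight $\pm(\alpha_i+\alpha_j)$ and, after reindexing and telescoping the sums over $\Delta^+$, reduces precisely to the anticommutator on the right.

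The principal obstacle throughout is the sheer volume of bookkeeping: every identity boils down to reconciling the symmetric (anticommutator) definitions of $v_i$ and $w_j^\pm$ with the asymmetric expressions produced by the Leibniz and Jacobi identities, by matching terms against a reindexed copy of the sum over $\Delta^+$. No deep input is needed beyond Chevalley-basis structure constants, the Jacobi identity, and invariance of $(\cdot,\cdot)$; conceptually, $\tilde v_i$ and $w_i^\pm$ are precisely the ``shadow'' at $\hbar=0$ of the Yangian relations linking $J(h_i)$ to $J(x_j^\pm)$, which is what makes these the right identities for use in Theorem~\ref{T:Ycr(g)-}.
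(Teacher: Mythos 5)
The paper does not prove this lemma at all: it is imported verbatim from \cite{GNW} (Lemma 3.7 there), so there is no in-paper argument to compare against. Your plan — direct expansion via the Leibniz and Jacobi identities, reindexing the sum over $\Delta^+$ by $\al\mapsto\al\mp\al_j$, and using invariance of $(\cdot,\cdot)$ to match structure constants — is exactly the standard way these identities are verified, and your treatment of \eqref{hiwj} (pure weight count) and of \eqref{vixj} (diagonal term gives the Cartan piece, off-diagonal terms collapse via $(\al\pm\al_j,\al_i)-(\al,\al_i)=\pm(\al_j,\al_i)$) is sound. The Chevalley-involution trick for the second equality in \eqref{wixj} is also legitimate: with the normalization $(x_\al^+,x_\al^-)=1$ one gets $\epsilon_\al\epsilon_{-\al}=1$ for the scalars in $\omega(x_\al^\pm)=\epsilon_{\pm\al}x_\al^\mp$, hence $\omega(v_i)=v_i$ and $\omega(w_i^\pm)=w_i^\mp$, which transports $[w_i^+,x_j^-]=\delta_{ij}v_i$ into $[x_j^+,w_i^-]=\delta_{ij}v_i$.

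There are two genuine weaknesses. First, a logical slip: for $i\ne j$ you write that $[w_i^+,x_j^-]$ has weight $\al_i-\al_j$, ``so'' the root-space contributions must cancel. A nonzero $\mfh$-weight does not force an element of $\mfU\mfg$ to vanish — the weight space $(\mfU\mfg)[\al_i-\al_j]$ is large — so the vanishing for $i\ne j$ is a substantive cancellation that the weight argument does nothing to establish; it must come entirely from the Jacobi-identity bookkeeping you defer. Second, and relatedly, the two identities that carry essentially all of the content of the lemma — \eqref{wixj} (both the $i\ne j$ vanishing and the $i=j$ assembly into $v_i=\tilde v_i-\tfrac12 h_i^2$) and \eqref{wixjpm} — are asserted to ``work out'' after ``reindexing and telescoping'' without the computation being carried out, and \eqref{wixjpm} in particular does not follow formally from \eqref{vixj} by any Jacobi manipulation (the coefficients $(\al_i,\al_i)$ versus $(\al_i,\al_j)$ obstruct the obvious attempt), so it genuinely requires its own expansion. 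As a sanity check your framework does reproduce the $\mfsl_2$ case correctly (e.g.\ $2[w^+,e]=-\tfrac12[\{e,h\},e]=-\{e,e\}$ matches the right-hand side of \eqref{wixjpm}), but as written the proposal is an outline of the proof in \cite{GNW} rather than a proof.
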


\begin{proof}[Proof of Theorem \ref{T:Ycr(g)-}] We will assume for this proof that $\mfg\neq\mfsl_2$; the $\mfsl_2$-case will be considered in Appendix \ref{app}.

\medskip 

\noindent \textit{Step 1}: $\Phi_{\mr{cr},J}$ extends to a homomorphism of algebras. 

\medskip 

 It was observed in (3.12) of \textit{loc.~cit.}~that the above relations already imply that the images of the generators $x_{ir}^\pm, h_{ir}^\pm$  under $\Phi_{\mr{cr},J}$ 
 satisfy those defining relations of $Y_\zeta^\cur(\mfg)$ given in \eqref{Le1}-\eqref{Le3} except $[h_{i1},h_{j1}]=0$. For completeness, we repeat this observation here with some added detail. The relations involving only elements of the Lie algebra $\mfg$ hold automatically, so we need not consider these relations. 
 
\medskip

\noindent \textit{Step 1.1}: $\Phi_{\mr{cr},J}$ preserves the defining relations \eqref{Le1}-\eqref{Le3} except $[h_{i1},h_{j1}]=0$.  

\medskip

Let us begin by showing $[\Phi_{\mr{cr},J}(\tl{h}_{i1}),\Phi_{\mr{cr},J}(x_{j0}^\pm)]=\pm (\alpha_i,\alpha_j)\,\Phi_{\mr{cr},J}(x_{j1}^\pm)$ for all $i,j\in \mcI$.
Since $\tl{h}_{i1}=h_{i1}-\tfrac{1}{2}\zeta h_{i0}^2$, we have 
\begin{equation*}
[\Phi_{\mr{cr},J}(\tl{h}_{i1}),\Phi_{\mr{cr},J}(x_{j0}^\pm)]=[J(h_{i})-\zeta\, \tl{v}_i,x_{j}^\pm]=J([h_{i},x_{j}^\pm])\mp \zeta(\alpha_i,\alpha_j)\, w^\pm_j=\pm(\alpha_i,\alpha_j)\left(J(x_{j}^\pm)-\zeta w^\pm_j \right)=\pm(\alpha_i,\alpha_j)\,\Phi_{\mr{cr},J}(x_{j0}^\pm).
 \end{equation*}
Here we have made use of \eqref{vixj} and the relation $[J(X),Y]=J([X,Y])$ (for all $X,Y\in \mfg$) in the second equality. 

Next, we show that $[\Phi_{\mr{cr},J}(x_{i1}^+),\Phi_{\mr{cr},J}(x_{j0}^-)]=\delta_{ij}\Phi_{\mr{cr},J}(h_{i1})$. Using relation \eqref{wixj} and $[J(x_{i}^+),x_{j}^-]=J([x_{i}^+,x_{j}^-])$, we obtain the sequence of equalities
\begin{equation*}
 [\Phi_{\mr{cr},J}(x_{i1}^+),\Phi_{\mr{cr},J}(x_{j0}^-)]=[J(x_{i}^+)-\zeta w_i^+,x_{j}^-]=\delta_{ij}J(h_{i})-\zeta[w_i^+,x_{j}^-]=\delta_{ij}\left(J(h_{i})-\zeta v_i\right)=\delta_{ij}\Phi_{\mr{cr},J}(h_{i1}).
\end{equation*}
We now verify $[\Phi_{\mr{cr},J}(h_{i1}),\Phi_{\mr{cr},J}(h_{i0})]=0$: 
\begin{equation*}
 [\Phi_{\mr{cr},J}(h_{i1}),\Phi_{\mr{cr},J}(h_{j0})]=[J(h_{i})-\zeta v_i,h_{j}]=J([h_{i},h_{j}])-\zeta[v_i,h_{j}]=0,
\end{equation*}
by the first relation in \eqref{hiwj}.

The last relation which follows from Lemma \ref{lem:GNW} is 
\begin{equation*}
[\Phi_{\mr{cr},J}(x^\pm_{i1}),\Phi_{\mr{cr},J}(x^\pm_{j0})] = [\Phi_{\mr{cr},J}(x^\pm_{i0}),\Phi_{\mr{cr},J}(x^\pm_{j1})] \pm \tfrac12\zeta\,(\al_i,\al_j)\, \big\{\Phi_{\mr{cr},J}(x^\pm_{i0}) , \Phi_{\mr{cr},J}(x^\pm_{j0}) \big\}.
\end{equation*}
To see why this is true, let us begin with the left-hand side: 
\begin{align*}
 [\Phi_{\mr{cr},J}(x^\pm_{i1}),\Phi_{\mr{cr},J}(x^\pm_{j0})]= {} &[J(x_{i}^\pm)-\zeta w_i^\pm,x^\pm_{j}]\\
                                          = {} &J([x_{i}^\pm,x^\pm_{j}])-\zeta [w_i^\pm,x^\pm_{j}]\\
                                          = {} &[x_{i}^\pm,J(x^\pm_{j})]-\zeta([x^\pm_{i}, w^\pm_j]\mp \tfrac{1}{2}(\alpha_i,\alpha_j) \left\{x^\pm_{i},x^\pm_{j}\right\}) \quad \text{ by }\;\eqref{wixjpm}\\
                                          = {} & [x_{i}^\pm,J(x^\pm_{j})-\zeta w^\pm_j]\pm \tfrac12\zeta\,(\al_i,\al_j)\, \big\{\Phi_{\mr{cr},J}(x^\pm_{i0}) , \Phi_{\mr{cr},J}(x^\pm_{j0}) \big\}\\
                                          = {} &[\Phi_{\mr{cr},J}(x^\pm_{i0}),\Phi_{\mr{cr},J}(x^\pm_{j1})] \pm \tfrac12\zeta\,(\al_i,\al_j)\, \big\{\Phi_{\mr{cr},J}(x^\pm_{i0}) , \Phi_{\mr{cr},J}(x^\pm_{j0}) \big\}.
\end{align*}

\noindent \textit{Step 1.2}: $\Phi_{\mathrm{cr},J}$ preserves the relation $[\tilde h_{i1},\tilde h_{j1}]=0$.

By definition of $\Phi_{\mathrm{cr},J}$, we have 
\begin{equation*}
 [\Phi_{\mathrm{cr},J}(\tilde h_{i1}),\Phi_{\mathrm{cr},J}(\tilde h_{j1})]=[J(h_{i})-\zeta \tilde v_i,J(h_{j})-\zeta \tilde v_j]=[J(h_{i}),J(h_{j})]-\zeta[\tilde v_i,J(h_{j})]-\zeta[J(h_{i}),\tilde v_j]+\zeta^2[\tilde v_i,\tilde v_j]. 
\end{equation*}
Using the relations \eqref{J0} and \eqref{J1}, it can be deduced that $[J(h_{i}),\tilde v_j] = [J(h_{j}),\tilde v_i]$, and hence 
\begin{equation}\label{h<->J-v}
 [\Phi_{\mathrm{cr},J}(\tilde h_{i1}),\Phi_{\mathrm{cr},J}(\tilde h_{j1})]=[J(h_{i}),J(h_{j})]-\zeta^2[\tilde v_j,\tilde v_i].
\end{equation}

From \eqref{J2} with $X_1=h_{i},\, X_2=x_{j}^+$ and $X_3=x_{j}^-$ we obtain 
\begin{equation*}
[J(h_{i}),J(h_{j})]= \zeta^2 \sum_{\la,\mu,\nu} \al_{\la\mu\nu} \{ X_\la, X_\mu, X_\nu \}  \;\text{ where }\;\al_{\la\mu\nu}=\left([h_{i},X_\lambda],\big[[x_{j}^+,X_\mu],[x_{j}^-,X_\nu] \big]\right).
\end{equation*}
Here we have used $\big[J(h_{i}),[J(x_{j}^+),x_{j}^-]\big] = [J(h_{i}),J(h_{j})]$ and $\big[h_{i},[J(x_{j}^+),J(x_{j}^-)]\big] =0$. As a consequence, we deduce from \eqref{h<->J-v} that 
\begin{equation*}
 [\Phi_{\mathrm{cr},J}(\tilde h_{i1}),\Phi_{\mathrm{cr},J}(\tilde h_{j1})]\in \mathfrak{Ug}.
\end{equation*}
Our present goal is to show that $[\Phi_{\mathrm{cr},J}(\tilde h_{i1}),\Phi_{\mathrm{cr},J}(\tilde h_{j1})]$ also belongs to the subspace of primitive elements of $\mathfrak{Ug}$, which is precisely the Lie algebra $\mfg$.

Let $\square: Y_\zeta(\mfg)\to Y_\zeta(\mfg)\otimes Y_\zeta(\mfg)$ be the linear map defined by $\square(X)=X\otimes 1 + 1\otimes X$ for all $X\in Y_\zeta(\mfg)$. Then, setting $\Omega_-=\sum_{\alpha\in \Delta^+}x_\alpha^+\otimes x_\alpha^-$ and $\Omega_+=\Omega-\Omega_-$, we have
\begin{equation*}
 \Delta(J(h_i))=\square(J(h_i))+\frac{\zeta}{2}[h_i\otimes 1,\Omega]\quad \text{ and }\quad \Delta(\tilde v_i)=\square(\tilde v_i)-\frac{1}{2}[h_i\otimes 1,\Omega_+-\Omega_-],
\end{equation*}
which implies that $\Delta(\Phi_{\mathrm{cr},J}(\tilde h_{i1}))=\square(\Phi_{\mathrm{cr},J}(\tilde h_{i1}))+\zeta[h_i\otimes 1,\Omega_+]$. Therefore, 
\begin{align*}
 (\Delta-&\square)([\Phi_{\mathrm{cr},J}(\tilde h_{i1}),\Phi_{\mathrm{cr},J}(\tilde h_{j1})])\\
 =&\zeta[\square(\Phi_{\mathrm{cr},J}(\tilde h_{i1})),[h_j\otimes 1,\Omega_+]]+\zeta[[h_i\otimes 1,\Omega_+],\square(\Phi_{\mathrm{cr},J}(\tilde h_{j1}))]+\zeta^2[[h_i\otimes 1,\Omega_+],[h_j\otimes 1,\Omega_+]].
\end{align*}
By Part II of the proof of \cite[Thm. 4.9]{GNW}, the right-hand side of the above equality vanishes. This proves that $[\Phi_{\mathrm{cr},J}(\tilde h_{i1}),\Phi_{\mathrm{cr},J}(\tilde h_{j1})]$ is primitive, and hence an element of $\mfg$. By \eqref{h<->J-v} and Lemma \ref{lem:GNW}, this element also commutes with the Cartan subalgebra $\mfh$ of $\mfg$, and hence must itself belong to $\mfh$: 
\begin{equation*}
 [\Phi_{\mathrm{cr},J}(\tilde h_{i1}),\Phi_{\mathrm{cr},J}(\tilde h_{j1})]=[J(h_{i}),J(h_{j})]-\zeta^2[\tilde v_j,\tilde v_i]\in \mfh.
\end{equation*}
 To complete the proof, we appeal to Lemma \ref{L:ext-aut} with $\phi$ specialized to the Chevalley involution $\varkappa$ of $\mathfrak{Ug}$:
\begin{equation}
 \varkappa(h_i)=-h_i,\quad \varkappa(x_i^\pm)=-x_i^\mp \; \text{ for all } \; i\in \mcI. \label{varkappa0}
\end{equation}
We will denote its extension to $Y_\zeta(\mfg)$, defined by \eqref{ext-aut}, again by $\varkappa$.

Note that $\varkappa$ fixes $[J(h_i),J(h_j)]$, and the same is true for $[\tilde v_j,\tilde v_i]$ as it is a linear combination of elements of the form $\{x_\al^+,x_\al^-\}$, all of which are fixed by $\varkappa$. Since $\varkappa(H)=-H$ for all $H\in \mfh$, we can conclude that
\begin{equation*}
 [\Phi_{\mathrm{cr},J}(\tilde h_{i1}),\Phi_{\mathrm{cr},J}(\tilde h_{j1})]=[J(h_{i}),J(h_{j})]-\zeta^2[\tilde v_j,\tilde v_i]=0,
\end{equation*}
as desired. \qed

\noindent \textit{Step 2:} $\Phi_{\mr{cr},J}$ is an isomorphism. 

\medskip 

Recall that $Y_\zeta(\mfg)$ and $Y_\zeta^\cur(\mfg)$ admit filtrations and, by definition, $\Phi_{\mr{cr},J}$ respects these and hence
we may consider the associated graded homomorphism $\gr \Phi_{\mr{cr},J}:\gr Y_\hbar^\cur(\mfg)\to \gr Y_\hbar(\mfg)$. After identifying $\gr Y_\hbar^\cur(\mfg)$ and $\gr Y_\hbar(\mfg)$ with the enveloping algebra of $\mfg[s]$, $\gr \Phi_{\mr{cr},J}$ becomes the identity map, so $\gr \Phi_{\mr{cr},J}$ is an isomorphism and hence so is $\Phi_{\mr{cr},J}$.  \end{proof}

Up to this point we have not equipped $Y_\zeta^{\mr{cr}}(\mfg)$ with the structure of a Hopf algebra. This is remedied by imposing on $Y_\zeta^{\mr{cr}}(\mfg)$
the unique Hopf algebra structure such that the algebra isomorphism $\Phi_{\mr{cr},J}$ of Theorem \ref{T:Ycr(g)-} becomes an isomorphism of Hopf algebras. Formulas 
for the coproduct applied to the generators $h_{i1}$ and $x_{i1}^\pm$ of Theorem \ref{smallerset} are not difficult to compute and can be found, for example, in (4.7) and (4.13) of \cite{GNW}, respectively. In general, it is not at all a simple task to compute $\Delta(h_{ir})$ and $\Delta(x_{ir}^\pm)$ for arbitrary $r\geq 2$. This has, however, been achieved for $\mfg=\mfsl_N$: see Theorem 4.5 of \cite{Cr}.


\section{Orthogonal and symplectic Yangians}\label{sec:sosp-Y}


The main result in this section is the proof of the equivalence between the $RTT$ and the $J$-presentations, and thus with the current presentation as well. The general approach was sketched in \cite{Dr1}: we will follow this approach and provide more details, but we will also be able to offer a second, more elementary proof which circumvents the use of the universal $R$-matrix.

Before stating and proving these results, we need to recall some basic facts and notation related to the orthogonal and symplectic Lie algebras.


\subsection{Orthogonal and symplectic Lie algebras}

Let $N=2n$ or $N=2n+1$. We will denote by $\mfg_N$ either $\mfsp_N$ or $\mfso_N$: the latter case is only possible if $N\ge 3$. 
The Lie algebra $\mfg_N$ can be realized as a Lie subalgebra of $\mfgl_N$ as follows. We label the rows and columns of $\mfgl_N$ by the indices $\{ \pm1,\ldots,\pm n\}$ if $N=2n$ and by $\{0, \pm 1,\ldots,\pm n \}$ if $N=2n+1$. Set $\theta_{ij}=1$ in the orthogonal case and $\theta_{ij}=\mr{sign}(i)\cdot \mr{sign}(j)$ in the symplectic case for $i,j\in\{ \pm 1, \pm 2, \ldots, \pm n  \}$. We denote by $(\ge)$ the symbol $>$ in the orthogonal case and the symbol $\ge$ in the symplectic case. We define $(\le)$ analogously.
Let $E_{ij}$ denote the usual elementary matrix of $\mfgl_N$. Define the transposition $t$ by $(E_{ij})^{t} = \theta_{ij} E_{-j,-i}$ and set $F_{ij} = E_{ij} - (E_{ij})^{t}$ so that 
\[
[F_{ij},F_{kl}] = \delta_{jk} F_{il} - \delta_{il} F_{kj} + \delta_{j,-l} \theta_{ij} F_{k,-i} - \delta_{i,-k} \theta_{ij} F_{-j,l} \qu\text{and}\qu F_{ij} + \theta_{ij}F_{-j,-i}=0 .
\] 
Then $\mfg_N = \mr{span}_{\C} \{ F_{ij}  \, : \, -n\le i,j\le n \}$. It is understood here and for the rest of this paper that, for inequalities of the form $-n \le i\le n$, the index $i$ can be 0 only when $N$ is odd. A vector space basis of $\mfg_N$ is provided by 
\[ 
\{ F_{ij} \, : \, -n \le  i, j \le n \text{ and } i+j \,\,(\ge)\,\, 0  \}. 
\] 

As invariant, non-degenerate, bilinear form $(\cdot , \cdot)$ on $\mfg_N$, we can take $(X_1,X_2) = \frac{1}{2} \mr{Tr}(X_1X_2)$ for any $X_1,X_2\in\mfg_N$. It~follows that $(F_{ij},F_{kl}) = 2^{\del_{i,-j}}\del_{il}\del_{jk}$ for $i+j\,(\ge)\,0$ and $k+l\,(\ge)\,0$. Since the $J$-presentation of the Yangian is given in terms of an orthonormal basis, we provide here such a basis for $\mfg_N$.

A basis $\{ X_{\lambda}\}_{\la\in\Lambda_N}$ of $\mfso_N$ orthonormal with respect to this bilinear form consists of $X_{ij}$ with $(i,j)\in\Lambda_N$ where  
\begin{equation}
X_{ij} = (F_{ij} + F_{ji})/\sqrt{2}, \;\; X_{ji} = (F_{ij} - F_{ji})/\sqrt{-2} \;\text{ for }\; i<j \;\text{ and }\; X_{ii} = F_{ii}; \;\; \Lambda_N = \{ (i,j) \, : \,  i+j>0 \}. \label{eq:orthoso} 
\end{equation}
For $\mfsp_N$, an orthonormal basis will include all these matrices along with, for $1\le i\le n$, 
\begin{equation} 
X_{i,-i} = (F_{-i,i}+F_{i,-i})/2, \;\; X_{-i,i} = \sqrt{-1}\,(F_{-i,i} - F_{i,-i})/2, \;\text{ so }\; \Lambda_N = \{ (i,j) \, : \,  i+j\,(\ge)\, 0 \}. \label{eq:orthosp} 
\end{equation}

Introduce the permutation operator $P$ and a one-dimensional projector $Q$ on $\C^N \ot \C^N$ by
\eq{ 
P=\msum_{i,j} E_{ij} \ot E_{ji}, \qq Q = \msum_{i,j} \theta_{ij} E_{ij} \ot E_{-i,-j} , \notag
}
where the sums are assumed to be over the range $-n,\ldots,n$, and it is understood that the index $0$ is omitted when $N$ is even. (We will always assume this sum rule for indices $i,j,\ldots$ and $a,b,\ldots$, if not specified otherwise.)  Let $I$ denote the identity matrix. Then $P^2=I$, $P^{t_1}=P^{t_2}=Q$, $PQ=QP=(\pm) Q$ and $Q^2=N Q$. Here (and further in this paper) $t_1$~and $t_2$ denote the partial transpositions on $\End(\C^N\ot \C^N)$, and the upper sign in $(\pm)$ or $(\mp)$ corresponds to the orthogonal case while the lower sign corresponds to the symplectic case. 

Using the orthonormal basis of $\mfg_N$ given in \eqref{eq:orthoso} and \eqref{eq:orthosp}, it can be computed that 
\eq{
\msum_{i+j\,(\ge)\,0} X_{ij} \ot X_{ij} = \tfrac 12 \msum_{i,j} F_{ij} \ot F_{ji} = P - Q , \qq
\msum_{i+j\,(\ge)\,0} X_{ij} X_{ij} = \tfrac12\msum_{i,j} F_{ij} F_{ji}  \label{Omega}
}
and the Casimir element $\sum_{i+j\,(\ge)\,0} X_{ij} X_{ij}$ of $\mfg_N$ operates in the adjoint representation of $\mfg_N$ by the eigenvalue $4\kappa$ where $\kappa =\tfrac{1}{2}N (\mp)1$.

We choose as Cartan subalgebra of $\mfg_N$ the abelian Lie subalgebra $\mfh_N$ spanned by the basis $\{ F_{11},\ldots,F_{nn} \}$ and we denote by $\{ \eps_1, \ldots, \eps_N \}$ its dual basis. The elements $F_{ij}$ with $i<j$ (and $j\neq -i$ when $\mfg_N \cong\mfso_N$) are chosen to be the positive root vectors as in \cite{Mo3}. With this choice, the simple roots are the following, for $i=1,2,\ldots,n-1$: 
\aln{
\mfg_N = {} & \mfso_{2n+1} &&: \;\; &-\eps_1,\; \eps_i - \eps_{i+1}, \\
\mfg_N = {} & \mfsp_{2n} &&: \;\; &-2\eps_1,\; \eps_i - \eps_{i+1},  \\
\mfg_N = {} & \mfso_{2n} &&: \;\; &-\eps_1-\eps_2,\; \eps_i - \eps_{i+1}.  
}
These simple roots will also be denoted $\al_0,\al_1, \ldots,\al_{n-1}$ in this order. (To obtain the standard description of the simple roots of $\mfg_N$ as given, for instance, in Appendix C in \cite{Kn},  apply the correspondence $\eps_i \leftrightarrow -\eps_{n+1-i}$.) We will denote by $\om_i$ the $i^{\mr{th}}$ fundamental weight. These are given by the following expressions: 
\aln{
& \mfg_N = \mfso_{2n+1} &&: \;\; \om_0 = -\tfrac{1}{2}\msum_{j=1}^n \eps_j, \;\; \om_i = -\msum_{j=i+1}^n \eps_j \; \text{ for }\; 1\le i\le n-1, 
\\
& \mfg_N = \mfsp_{2n} &&: \;\; \om_i = -\msum_{j=i+1}^n \eps_j \; \text{ for }\; 0\le i\le n-1,  
\\
& \mfg_N = \mfso_{2n} &&:  \;\;  \om_0 = -\tfrac{1}{2}\msum_{j=1}^n \eps_j, \;\; \om_1 = -\tfrac{1}{2}(-\eps_1 + \msum_{j=2}^n \eps_j), \;\; \om_i = -\msum_{j=i+1}^n \eps_j \; \text{ for } 2\le i\le n-1. 
}

The Lie algebra $\mfg_N$ can be presented using generators $x_i^{\pm}$, $h_i$ with $i\in\mcI$ and relations (restrict Definition \ref{D:Ycr(g)-} to generators with $r=0$). The map between the generators $x^\pm_i$, $h_i$ and the standard basis elements $F_{ij}$ is given by, 
\begin{alignat}{99}
& \text{for }\; 1\le i \le n\!-\!1 \; && :\;\; x^+_i \mapsto F_{i,i+1} , && x^-_{i} \mapsto F_{i+1,i}, && h_i\, \mapsto F_{ii} - F_{i+1,i+1} && \qu\text{and} \label{x->I-1} \\
& \mfg_N = \mfso_{2n+1} &&: \;\;x^+_0 \mapsto F_{01} , && x^-_0 \mapsto  F_{10} , && h_{0} \mapsto -F_{11} , \label{x->I-2}
\\
& \mfg_N = \mfsp_{2n} &&: \;\; x^+_0 \mapsto F_{-1,1}/\sqrt{2} , \qu && x^-_0 \mapsto F_{1,-1}/\sqrt{2} , \qu && h_0 \mapsto -2F_{11} , \label{x->I-3} 
\\
& \mfg_N = \mfso_{2n} &&: \;\;x^+_{0} \mapsto F_{-1,2} , && x^-_{0} \mapsto F_{2,-1} , && h_{0} \mapsto -F_{11}-F_{22} . \label{x->I-4}
\end{alignat}

The current Lie algebra $\mfg_N[s]$ (which equals $\mfg_N \ot \C[s]$) is generated by elements $F_{ij}^{(r)}$ ($= F_{ij} \ot s^r$) satisfying
\begin{equation*} 
[F_{ij}^{(r_1)},F_{kl}^{(r_2)}] = (\delta_{jk} F_{il} - \delta_{il} F_{kj} + \theta_{ij} \delta_{j,-l} F_{k,-i} - \theta_{ij} \delta_{i,-k} F_{-j,l}) \ot s^{r_1+r_2}, \qq 
F_{ij}^{(r)} + \theta_{ij} F_{-j,-i}^{(r)} = 0. 
\end{equation*}

Finally, we remark that the following notation will be used throughout this section. For an $N\times N$ matrix $X$ with entries $x_{ij}$ in an associative algebra $\mcA$ over $\C$ we write
\eq{
X_{\ell} = \msum_{i,j} I^{\ot \ell-1} \ot E_{ij} \ot I^{\ot k-\ell} \ot x_{ij} \in \End((\C^N)^{\ot k}) \ot \mcA , \notag
}
where $k\in \N_{\ge2}$ will always be clear from the context.


\subsection{Extending certain representations of \texorpdfstring{$\mfg_N$}{} to \texorpdfstring{$Y_{\zeta}(\mfg_N)$}{}}

We explain how to obtain a representation of the Yangian on certain fundamental representations of the Lie algebra $\mfg_N$.
In particular, to prove the main theorem of this section (which is Theorem \ref{T:YR-iso}), we need to turn $\C^N$ into a representation of  $Y_\zeta(\mfg_N)$. 

\begin{proposition} \label{P:Y(g)-rep}
The natural representation of $\mfg_N$ on $\C^N$ extends to a representation of $Y_\zeta(\mfg_N)$ by letting $J(X)$ act by~$0$ for all $X\in\mfg_N$. 
\end{proposition}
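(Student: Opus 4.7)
The plan is to check the defining relations of $Y_\zeta(\mfg_N)$ from Definition~\ref{D:Y(g)-DI} directly on the candidate representation $X \mapsto X|_{\C^N}$, $J(X) \mapsto 0$. Since $\mfg_N \neq \mfsl_2$, only relations \eqref{J0}, \eqref{J1} and \eqref{J2} are in play. The first two are immediate: every relation involving $J$ reads $0=0$, while the purely $\mfg$-theoretic equation $X_1 X_2 - X_2 X_1 = [X_1, X_2]$ is precisely the $\mfg_N$-module axiom satisfied by $\C^N$. The entire burden therefore falls on \eqref{J2}, whose left-hand side manifestly vanishes; it reduces to the assertion that the cubic element
\[
C(X_1, X_2, X_3) := \msum_{\la,\mu,\nu} \al_{\la\mu\nu} \{X_\la, X_\mu, X_\nu\} \in \mf{Ug}_N
\]
acts as zero on $\C^N$ for every triple $X_1, X_2, X_3 \in \mfg_N$.

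To simplify $C$, I would exploit the invariance $([A, B], C) = (A, [B, C])$ of the form $(\cdot, \cdot)$. Applying it twice yields
\[
\al_{\la\mu\nu} = ([[[X_1, X_\la], [X_2, X_\mu]], X_3], X_\nu),
\]
and the orthonormality identity $\msum_\nu (Y, X_\nu) X_\nu = Y$ then collapses the $\nu$-sum to
\[
\msum_\nu \al_{\la\mu\nu} X_\nu = [[[X_1, X_\la], [X_2, X_\mu]], X_3].
\]
After substitution into the symmetrizer $\{\cdot,\cdot,\cdot\}$, the expression $C(X_1, X_2, X_3)$ becomes a double sum over $\la, \mu$ of symmetrized products of $X_\la$, $X_\mu$, and the triple commutator above, an expression best read as a contraction of the tensor Casimir $\Omega = \msum_\la X_\la \ot X_\la$ against the concrete adjoint data $\mr{ad}(X_i)$.

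The final step is to evaluate this double sum on $\C^N$ and show it vanishes. The key input is \eqref{Omega}, which identifies $\Omega|_{\C^N \ot \C^N} = P - Q$, a rank-two operator built from the permutation $P$ and the one-dimensional projector $Q$. Combined with the quadratic identities $P^2 = I$, $Q^2 = NQ$, $PQ = QP = (\pm) Q$, and the explicit matrix realisation $\mfg_N \subset \mfgl_N$ with basis $\{F_{ij}\}$ together with the product rule derived from $E_{ij}E_{kl} = \del_{jk}E_{il}$, the remaining sum can be evaluated as a concrete element of $\End(\C^N)$. Conceptually, the cancellation occurs because $\C^N \ot \C^N$ decomposes into only three $\mfg_N$-irreducibles, on each of which $\Omega$ acts by a distinct scalar; any cubic combination built from $\Omega$ in the manner prescribed by \eqref{J2} must then reduce to a sum of eigenvalue products which telescopes to zero. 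I expect the main obstacle to be the bookkeeping in this matrix computation, in particular the uniform treatment of the orthogonal and symplectic cases via the signs $\theta_{ij}$ encoded in $Q$ and the realization of $\mfg_N$; no new conceptual idea beyond the quadratic identities for $P,Q$ and the invariance of $(\cdot,\cdot)$ is needed. This matches the strategy sketched in the proof of Proposition~12.1.17 of \cite{ChPr2}, the details of which the present paper intends to supply.
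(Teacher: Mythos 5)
Your strategy --- check relation \eqref{J2} of Definition \ref{D:Y(g)-DI} directly on $\C^N$, which (since the left-hand side vanishes identically under $J\mapsto 0$) reduces to showing that $C(X_1,X_2,X_3)=\msum_{\la,\mu,\nu}\al_{\la\mu\nu}\{X_\la,X_\mu,X_\nu\}$ acts by zero --- is a legitimate route and is genuinely different from the paper's. Your reduction is sound up to and including the collapse of the $\nu$-sum: invariance of $(\cdot,\cdot)$ does give $\al_{\la\mu\nu}=([[[X_1,X_\la],[X_2,X_\mu]],X_3],X_\nu)$, and trilinearity of $\{\cdot,\cdot,\cdot\}$ turns $C$ into $\msum_{\la,\mu}\{X_\la,X_\mu,[[[X_1,X_\la],[X_2,X_\mu]],X_3]\}$. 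The gap is in the last step, which is the entire content of the proposition. The claim that ``any cubic combination built from $\Omega$ in the manner prescribed by \eqref{J2} must reduce to a sum of eigenvalue products which telescopes to zero'' is not a valid argument: the same cubic element does \emph{not} act by zero on every irreducible $\mfg_N$-module (this is exactly the obstruction that forces the hypotheses $m_i=1$ or $m_i=\frac{(\theta,\theta)}{(\al_i,\al_i)}$ in Proposition \ref{P:fundrepYang}), so no reasoning that uses only the spectrum of $\Omega$ on $V\ot V$ and the general shape of \eqref{J2} can succeed; an input specific to $\C^N$ is unavoidable. There is also a technical mismatch in the set-up you sketch: the identities $P^2=I$, $Q^2=NQ$, $PQ=QP=(\pm)Q$ live on $\C^N\ot\C^N$, whereas $C(X_1,X_2,X_3)$ is an operator on a single copy of $\C^N$; to exploit them you would first have to rewrite the double sum as a partial trace of an expression in $P$ and $Q$ over auxiliary tensor factors, which you have not done, and you would still have to verify the vanishing for all triples $(X_1,X_2,X_3)$ --- unlike for the left-hand side, there is no reduction to a single triple here.

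By contrast, the paper avoids this computation entirely: it first extends the natural representation to the current presentation $Y_\zeta^\cur(\mfg_N)$ by explicit formulas (Lemma \ref{L:Y(g)-rep}), transports it to $Y_\zeta(\mfg_N)$ through the isomorphism of Theorem \ref{T:Ycr(g)-} (so existence of the extension is automatic), and then only needs to show that the transported operator $\rho(J(X))$ is zero. Since $[X_1,J(X_2)]=J([X_1,X_2])$ and $\mfg_N$ is simple (with $\mfso_4$ handled separately), it suffices to compute $\rho(J(h_0))=\varrho(h_{01})+\zeta\varrho(v_0)$ for the single element $h_0$, a short case-by-case calculation. To salvage your approach you would have to actually carry out the $\End(\C^N)$ evaluation of $C(X_1,X_2,X_3)$, say for $X_1,X_2,X_3$ ranging over the $F_{ij}$, rather than appeal to the decomposition of $\C^N\ot\C^N$.
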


To prove the proposition, we will use the following lemma which states the relevant representation of $Y^\cur_\zeta(\mfg_N)$. 

\begin{lemma} \label{L:Y(g)-rep}
Let $a=-\zeta/4$ if $\mfg_N=\mfso_{2n+1}$, $a=\zeta/2$ if $\mfg_N=\mfsp_{2n}$ and $a=-\zeta/2$ if $\mfg_N=\mfso_{2n}$.
Then the following formulas provide an algebra homomorphism $\varrho: Y_{\zeta}^\cur(\mfg_N) \rightarrow \End(\C^N)$, for $1\le i \le n-1$ and $r\ge0$:
\eq{ \label{rhoi}
\begin{gathered}\varrho(x_{ir}^+) = (a+i\zeta/2)^r E_{i,i+1} - (-a-i\zeta/2)^r E_{-i-1,-i}, \qu 
\varrho(x_{ir}^-) = (a+i\zeta/2)^r E_{i+1,i} - (-a-i\zeta/2)^r E_{-i,-i-1}, \\
\varrho(h_{ir}) = (a+i\zeta/2)^r (E_{i,i} - E_{i+1,i+1}) - (-a-i\zeta/2)^r (E_{-i,-i} - E_{-i-1,-i-1})
\end{gathered}
}
and
\eqa{
 \label{rho0B}
\qq & \mfg_N = \mfso_{2n+1} &&: \;\; \begin{gathered}
\varrho(x_{0r}^+) = (-\zeta/4)^r E_{01} - (\zeta/4)^r E_{-1,0}, \qq 
\varrho(x_{0r}^-) = (-\zeta/4)^r E_{10} - (\zeta/4)^r E_{0,-1} , \\ 
\varrho(h_{0r}) = (-\zeta/4)^r ( E_{00} - E_{11}) + (\zeta/4)^r (E_{-1,-1} - E_{00} ),
\end{gathered}
\\
\label{rho0C}
& \mfg_N = \mfsp_{2n} &&: \;\; \varrho(x_{0r}^+) = \del_{r0} F_{-1,1}/\sqrt{2}, \qq 
\varrho(x_{0r}^-) = \del_{r0} F_{1,-1}/\sqrt{2} , \qq \varrho(h_{0r}) = -2 \del_{r0}F_{11},
\\
 \label{rho0D}
& \mfg_N = \mfso_{2n} &&: \;\; \varrho(x_{0r}^+) = \del_{r0} F_{-1,2}, \qq 
\varrho(x_{0r}^-) = \del_{r0} F_{2,-1} , \qq 
\varrho(h_{0r}) = -\del_{r0} (F_{11}+F_{22}). \qq
}
\end{lemma}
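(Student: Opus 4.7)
My plan is to verify Lemma \ref{L:Y(g)-rep} by direct check of the defining relations of $Y_\zeta^{\mr{cr}}(\mfg_N)$, exploiting Theorem \ref{smallerset} to reduce the work to the generators with $r,s\in\{0,1\}$. Once those relations are established, the recursions $x_{i,r+1}^\pm = \pm(\alpha_i,\alpha_i)^{-1}[\tilde h_{i1},x_{i,r}^\pm]$ and $h_{i,r}=[x_{i,r}^+,x_{i,0}^-]$ from Theorem \ref{smallerset} force the homomorphism, and a short induction confirms that the closed forms they produce match \eqref{rhoi}--\eqref{rho0D}.

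First I would record that on the generators with $r=0$ the formulas agree with the identifications \eqref{x->I-1}--\eqref{x->I-4} of the Chevalley presentation of $\mfg_N$; hence the Serre relations \eqref{Le3}, as well as the two Lie-algebra relations at the start of \eqref{Le1}, are automatic from the defining representation on $\C^N$. This leaves only the mixed-degree relations in \eqref{Le1} and \eqref{Le2}. To handle these uniformly, for $1\le i\le n-1$ set $u_i^+ := a+i\zeta/2$ and $u_i^-:=-a-i\zeta/2$, and observe that
\[
\varrho(x_{i,r}^\pm)=(u_i^+)^r\,E^\pm_{i,\mr{top}} - (u_i^-)^r\,E^\pm_{i,\mr{bot}},\qquad \varrho(h_{i,r})=(u_i^+)^r H^+_i - (u_i^-)^r H^-_i,
\]
where the ``top'' matrices $E_{i,i+1},E_{i+1,i},E_{ii}{-}E_{i+1,i+1}$ lie in the block on indices $\{i,i+1\}$, and the ``bottom'' matrices $E_{-i-1,-i},E_{-i,-i-1},E_{-i,-i}{-}E_{-i-1,-i-1}$ lie in the block on indices $\{-i-1,-i\}$. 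These two blocks commute, so when checking a relation involving only one node $i$ the verification decouples into two scalar-evaluation computations inside a $\mfgl_2$, each of which reproduces the well-known evaluation representation of $Y_\zeta^{\mr{cr}}(\mfsl_2)$ at parameters $u_i^\pm$. This disposes of all relations of the form $[h_{ir},h_{is}]=0$, $[x_{i1}^\pm,x_{i0}^\pm]-[x_{i0}^\pm,x_{i1}^\pm]=\pm\tfrac12\zeta(\alpha_i,\alpha_i)\{x_{i0}^\pm,x_{i0}^\pm\}$, $[x_{ir}^+,x_{i0}^-]=h_{ir}$, and $[\tilde h_{i1},x_{i0}^\pm]=\pm(\alpha_i,\alpha_i)x_{i1}^\pm$, for $1\le i\le n-1$; and in type B, the same argument with $u_0^\pm=\mp\zeta/4$ handles node $0$.

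Next I would treat the cross-node relations between nodes $i$ and $j\ne i$. When both $i,j\in\{1,\dots,n-1\}$, the top/bottom block decomposition again reduces the calculation to the usual check that two adjacent evaluation representations of $Y_\zeta^{\mr{cr}}(\mfsl_n)$ at parameters $u_i^\pm$ and $u_j^\pm = u_i^\pm + (j-i)\zeta/2$ are compatible; these are standard and rely only on $u_j^\pm - u_i^\pm = (j-i)\zeta/2$, which is built into the definition of $a$. The genuinely type-dependent work is at the ``exceptional'' node $i=0$. In type C and type D the formulas \eqref{rho0C}, \eqref{rho0D} give $\varrho(x_{0,r}^\pm)=\varrho(h_{0,r})=0$ for $r\ge 1$, so one must verify that $\tilde h_{0,1}$ and the commutators $[\tilde h_{j,1},x_{0,0}^\pm]$ act consistently; the key non-trivial identity is $[\tilde h_{0,1},x_{1,0}^\pm]=\pm(\alpha_0,\alpha_1)x_{1,1}^\pm$, which one checks by hand using $\tilde h_{0,1}=h_{0,1}-\tfrac12\zeta h_{0,0}^2$ together with the explicit value of $a$ (for example, in the symplectic case $\varrho(\tilde h_{0,1})=-2\zeta(E_{11}+E_{-1,-1})$ and $(\alpha_0,\alpha_1)x_{1,1}^+=-2\zeta(E_{12}+E_{-2,-1})$ match). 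In type B the node $0$ fits into the general evaluation pattern and the check is the same as for the internal nodes.

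The main obstacle is bookkeeping at $i=0$: the normalizations in \eqref{x->I-2}--\eqref{x->I-4} introduce factors of $\sqrt{2}$ and the root length $(\alpha_0,\alpha_0)$ changes from type to type, so the value of $a$ must be chosen compatibly in each case. Once the boundary node is handled, the remaining relations \eqref{Le1}--\eqref{Le3} follow routinely, and by Theorem \ref{smallerset} the assignment extends uniquely to a homomorphism $\varrho: Y_\zeta^{\mr{cr}}(\mfg_N)\to\End(\C^N)$; a direct induction on $r$ using the recursion $x_{i,r+1}^\pm=\pm(\alpha_i,\alpha_i)^{-1}[\tilde h_{i1},x_{i,r}^\pm]$ then shows that this extension agrees with \eqref{rhoi}--\eqref{rho0D} for all $r\ge 0$.
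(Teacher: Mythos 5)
Your overall strategy is essentially the paper's: rely on the known evaluation action of the $\mfsl$-type nodes $1\le i\le n-1$ on $\C^N$, reduce the remaining work to the node-$0$ relations, and invoke Theorem \ref{smallerset} to limit the check to $r,s\in\{0,1\}$ before extending by the recursions. Your sample computation in type C is correct (indeed $[\varrho(\tilde h_{01}),\varrho(x_{10}^+)]=[-2\zeta(E_{11}+E_{-1,-1}),E_{12}-E_{-2,-1}]=-2\zeta(E_{12}+E_{-2,-1})=(\al_0,\al_1)\varrho(x_{11}^+)$), and the ``top/bottom'' decoupling is a clean self-contained way to handle the interior nodes, where the index sets $\{i,i+1\}$ and $\{-i-1,-i\}$ are genuinely disjoint, so all cross-products vanish and each relation splits into two evaluation-representation identities.

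There is, however, a concrete gap at node $0$ in type B. You claim that ``the same argument with $u_0^\pm=\mp\zeta/4$ handles node $0$'' and that this case ``is the same as for the internal nodes,'' but the two blocks at node $0$ live on the index sets $\{0,1\}$ and $\{-1,0\}$, which overlap at $0$. The cross-products therefore do not vanish: $E_{-1,0}E_{01}=E_{-1,1}\neq 0$, so both $[E_{01},E_{-1,0}]$ and $\{E_{01},E_{-1,0}\}$ are nonzero, and $E_{00}-E_{11}$ fails to commute with $E_{-1,0}$. Thus $\varrho$ restricted to the node-$0$ generators in type B is \emph{not} a direct sum of two commuting $\mfgl_2$ evaluation representations, and the single-node relations \eqref{Ycr(g)-2} and \eqref{Ycr(g)-3} with $i=j=0$ acquire genuine cross-terms on both sides. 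These do cancel --- for instance, with $p=-\zeta/4$, $q=\zeta/4$, both sides of \eqref{Ycr(g)-2} at $i=j=0$ equal $\tfrac{\zeta}{2}\bigl(-p^rq^s\,E_{-1,0}-q^rp^s\,E_{01}\bigr)$ --- so the lemma is unaffected, but this case genuinely requires a direct verification of the kind the paper carries out for its sample relation (your decoupling does apply to the $0$--$1$ cross-node relations in type B, since $\{0,1\}$ is disjoint from $\{-2,-1\}$ and $\{-1,0\}$ from $\{1,2\}$; it is only the $i=j=0$ relations that escape it). With that case checked by hand, the rest of your plan goes through.
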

\begin{proof}
It is known that the formulas \eqref{rhoi} define a representation of the subalgebra $Y_\zeta^\cur(\mfsl_n)\subset Y_\zeta^\cur(\mfg_N)$ on the space $\C^N$. For instance, they have appeared in Section 3 in \cite{ReSp} and can also be deduced using the evaluation homomorphism $\mathrm{ev}:Y_\zeta(\mfsl_n)\to \mfU\mfsl_n$ (see Proposition 12.1.15 in \cite{ChPr2}) together with the isomorphism $\Phi_{\cur,J}$ of Theorem \ref{T:Ycr(g)-}. Thus, proving the lemma amounts to verifying that this representation of $Y_\zeta^\cur(\mfsl_n)$ can be extended to all of $Y_\zeta^\cur(\mfg_N)$ via the assignments \eqref{rho0B}, \eqref{rho0C} and \eqref{rho0D}. 
It is possible to check that these formulas respect all the relations in Definition \ref{D:Ycr(g)-}, or to use Theorem \ref{smallerset} along with the inductive formula $\varrho(x_{i,r+1}^{\pm}) = \pm \frac{1}{2}[\varrho(\tilde{h}_{i1}),\varrho(x_{ir}^{\pm})]$ and $\varrho(h_{ir}) = [\varrho(x_{ir}^+),\varrho(x_{i0}^-)]$. Let us verify only one of these relations, namely \eqref{Ycr(g)-3} when $\mfg_N=\mfso_{2n+1}$ and $i=0,j=1,\pm=+$. We have
\begin{align}
[\varrho(x_{0,r+1}^+),\varrho(x_{1s}^{+})] & - [\varrho(x_{0r}^+),\varrho(x_{1,s+1}^{+})]  \notag \\
 = {} & [(-\zeta/4)^{r+1} E_{01} - (\zeta/4)^{r+1}E_{-1,0},(\zeta/4)^s E_{12} - (-\zeta/4)^s E_{-2,-1}] \notag  \\
 & \qquad \qquad  - [(-\zeta/4)^{r} E_{01} - (\zeta/4)^{r}E_{-1,0},(\zeta/4)^{s+1} E_{12} - (-\zeta/4)^{s+1} E_{-2,-1}] \notag \\
= {} & (-1)^{r+1} (\zeta/4)^{r+1+s} E_{02} - (-1)^s (\zeta/4)^{r+1+s} E_{-2,0} - (-1)^{r} (\zeta/4)^{r+1+s} E_{02} + (-1)^{s+1} (\zeta/4)^{r+1+s} E_{-2,0} \notag \\
= {} & 2((\zeta/4)^{r+1+s}) ((-1)^{r+1}  E_{02} + (-1)^{s+1} E_{-2,0}) \label{varrhoso1}
\end{align}
and
\begin{align}
-\tfrac{\zeta}{2}\,\{ \varrho(x_{0r}^+),\varrho(x_{1s}^{+}) \} = {} & -\tfrac{\zeta}{2}\, \{  (-\zeta/4)^{r} E_{01} - (\zeta/4)^{r}E_{-1,0}, (\zeta/4)^s E_{12} - (-\zeta/4)^s E_{-2,-1} \} \notag \\
= {} & -\tfrac{\zeta}{2}\, (\zeta/4)^{r+s} ((-1)^{r}E_{02} + (-1)^s E_{-2,0}) . \label{varrhoso2}
\end{align}
The equalities \eqref{varrhoso1} and \eqref{varrhoso2} show that applying $\varrho$ to the left-hand side and the right-hand side of \eqref{Ycr(g)-3} yields the same result. It is not more difficult to check all the other relations in all the cases.
\end{proof}

\begin{proof}[Proof of Proposition \ref{P:Y(g)-rep}]
We will denote by $\rho$ the representation of $Y_{\zeta}(\mfg_N)$ on $\C^N$ obtained from $\varrho$ via the isomorphism of Theorem \ref{T:Ycr(g)-}. Since $\mfg_N$ is a simple Lie algebra (except when $\mfg_N = \mfso_4$), we only need to show that $J(X)$ acts by $0$ on $\C^N$ for a single $X\in\mfg_N$ by \eqref{J0}. The case $\mfg_N=\mfso_4\simeq \mfso_2\oplus\mfso_2$ will be treated separately at the end. 
We choose $X=h_0$ and thus we need to compute the right-hand side of 
$
\rho(J(h_0))= \varrho(h_{01}) + \tfrac14 \zeta \sum_{\al\in\Delta^+} (\al,\al_0)\, \big\{\varrho(x^+_\al),\varrho(x^-_\al)\big\} - \tfrac12 \zeta\,(\varrho(h_{00}))^2
$
as an operator on $\C^N$. We check each case of $\mfg_N$ individually.

\noindent {\it Case I: $\mfg_N = \mfso_{2n+1}$}. From Lemma \ref{L:Y(g)-rep}, we obtain that $\varrho(h_{01}) - \tfrac12 \zeta\, (\varrho(h_{00}))^2 = -\tfrac14\zeta(E_{-1,-1}+2E_{00}+E_{11})$. Moreover,
\eqn{
\msum_{\al\in\Delta^+} (\al,\al_0)\, \big\{\varrho(x^+_\al),\varrho(x^-_\al)\big\} = {} & (\al_0,\al_0)\, \big\{\varrho(x^+_{00}),\varrho(x^-_{00})\big\} + \msum_{2\le i\le n} (\pm\eps_{1}-\eps_{i},-\eps_1)\, \big\{\varrho(x^+_{\pm\eps_1-\eps_i}),\varrho(x^-_{\pm\eps_1-\eps_i})\big\} 
\\
= {} & \{F_{01},F_{10}\} - \msum_{2\le i \le n} ( \{F_{1i} , F_{i1} \} - \{F_{-1,i} , F_{i,-1} \} ) = E_{-1,-1} + 2 E_{00} + E_{11}.
}

\noindent {\it Case II: $\mfg_N = \mfsp_{2n}$}. We now have $\varrho(h_{01}) - \tfrac12 \zeta\, (\varrho(h_{00}))^2 = -2\zeta(E_{-1,-1}+E_{11})$ and
\eqn{
\msum_{\al\in\Delta^+} (\al,\al_0)\, \big\{\varrho(x^+_\al),\varrho(x^-_\al)\big\} = {} & (\al_0,\al_0)\, \big\{\varrho(x^+_{00}),\varrho(x^-_{00})\big\} + \msum_{2\le i\le n} (\pm\eps_{1}-\eps_{i},-2\eps_1)\, \big\{\varrho(x^+_{\pm\eps_1-\eps_i}),\varrho(x^-_{\pm\eps_1-\eps_i})\big\} 
\\
= {} & 2\{F_{-1,1},F_{1,-1}\} - 2\msum_{2\le i \le n} ( \{F_{1i} , F_{i1} \} - \{F_{-1,i} , F_{i,-1} \} ) = 8(E_{-1,-1} + E_{11}).
}

\noindent {\it Case III: $\mfg_N = \mfso_{2n}$}. This time, we compute $\varrho(h_{01}) - \tfrac12 \zeta\, (\varrho(h_{00}))^2 = -\tfrac12\zeta(E_{-2,-2}+E_{-1,-1}+E_{11}+E_{22})$ and
\eqn{
\msum_{\al\in\Delta^+} (\al,\al_0)\, \big\{\varrho(x^+_\al),\varrho(x^-_\al)\big\} = {} & (\al_0,\al_0)\, \big\{\varrho(x^+_{00}),\varrho(x^-_{00})\big\} + \msum_{3\le i\le n} \msum_{j=1,2} (\pm\eps_{j}-\eps_{i},-\eps_1-\eps_2)\, \big\{\varrho(x^+_{\pm\eps_j-\eps_i}),\varrho(x^-_{\pm\eps_j-\eps_i})\big\} 
\\
= {} & 2\{F_{-1,2},F_{2,-1}\} - \msum_{3\le i \le n} \msum_{j=1,2}( \{F_{ji} , F_{ij} \} - \{F_{-j,i} , F_{i,-j} \} ) = 2 (E_{-2,-2}+E_{-1,-1}+E_{11}+E_{22}).
}
The expressions above yield $\rho(J(h_0))=0$ for each case of $\mfg_N$, as required. Finally, for $\mfg_N=\mfso_4$ we additionally need to compute $\rho(J(h_1))$:
\eqn{
\varrho(J(h_1)) = {} & \varrho(h_{11}) + \tfrac12 \zeta\, \big\{\varrho(x^+_{10}),\varrho(x^-_{10})\big\} - \tfrac12 \zeta\,(\varrho(h_{10}))^2 =  \tfrac12 \zeta \big\{F_{12},F_{21}\big\} -\tfrac12\zeta(E_{-2,-2}+E_{-1,-1}+E_{11}+E_{22}) = 0. \qedhere
}
\end{proof}

That $\C^N$ can be made into a representation of $Y_{\zeta}(\mfg_N)$ is also a consequence of the more general Proposition \ref{P:fundrepYang}, stated below, when $\omega_i=\omega_{n-1}$ and thus $V(\omega_i) = \C^N$. (\cite[Example 2]{Dr1} is Proposition \ref{P:fundrepYang} specialized to $\C^N$ and $\mfso_N$.) We presented the calculations above to show that, via the realization of the Yangian given by Definition \ref{D:Ycr(g)-}, it is not difficult to turn $\C^N$ into a representation of $Y_{\zeta}^\cur(\mfg_N)$ (and thus $Y_\zeta(\mfg_N)$) using rudimentary means, especially when starting with the known representation of $Y_{\zeta}^\cur(\mfsl_N)$ on $\C^N$.  Moreover, it is rare that one can give an explicit formula for the action of $x_{ir}^{\pm}$ on a representation of $Y_{\zeta}^\cur(\mfg_N)$ for all $r\ge 0$ and the formulas in the previous proof could be useful. The more general results of Proposition \ref{P:fundrepYang} are not needed in this section, but will be required in Section \ref{sec:rep}. 

Before stating Proposition \ref{P:fundrepYang}, we need to recall what a fundamental representation of the Yangian is. 
\begin{definition}\label{Funrep}
Let $a\in\C$ and $i\in \mcI$. We denote by $V(i;a)$ the irreducible finite-dimensional representation of $Y_{\zeta}^\cur(\mfg_N)$ with Drinfeld polynomials $(P_j(u))_{j\in \mcI}$ given by  $P_j(u)=1$ if $j\neq i$ and $P_i(u) = u-a$. We are referring here to the classification theorem for finite-dimensional irreducible representations of Yangians given in \cite{Dr3} - see also Theorem 12.1.11 in \cite{ChPr2} and \eqref{Dr-class} below.
\end{definition}
We also use the same terminology and notation when $V(i;a)$ is viewed as a $Y_\zeta(\mfg_N)$-module via the isomorphism of Theorem \ref{T:Ycr(g)-}. 
\begin{proposition}[\cite{Dr2} Theorem 7, \cite{ChPr2} Proposition 12.1.17]\label{P:fundrepYang}
Let $m_i$ be the multiplicity of the simple root $\al_i$ in the highest root $\theta$ of $\mfg_N$. If $m_i=1$ or if $m_i = \frac{(\theta,\theta)}{(\al_i,\al_i)}$, the fundamental representation $V(\omega_i)$ of $\mfg_N$ can be made into a fundamental representation of $Y_{\zeta}(\mfg_N)$ by letting $J(X)$ act by $aX$ for any $a\in\C$. 
\end{proposition}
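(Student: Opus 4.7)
The plan is to verify, under the assignment $\rho(J(X)) = a\rho(X)$, each of the defining relations \eqref{J0}, \eqref{J1}, and \eqref{J2} of Definition \ref{D:Y(g)-DI} on $V(\omega_i)$. Relations \eqref{J0} and \eqref{J1} are trivial, since the prescription is linear in $J$ and $[X_1, aX_2] = a[X_1, X_2] = J([X_1, X_2])$. For \eqref{J2}, substitution yields
\begin{equation*}
[aX_1, [aX_2, X_3]] - [X_1, [aX_2, aX_3]] = a^2\bigl([X_1, [X_2, X_3]] - [X_1, [X_2, X_3]]\bigr) = 0,
\end{equation*}
so the left-hand side vanishes identically on $V(\omega_i)$, independent of $a$. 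The proposition therefore reduces to showing that the operator
\begin{equation*}
S(X_1, X_2, X_3) := \msum_{\la,\mu,\nu} \al_{\la\mu\nu}\{X_\la, X_\mu, X_\nu\}, \qquad \al_{\la\mu\nu} = ([X_1, X_\la], [[X_2, X_\mu], [X_3, X_\nu]]),
\end{equation*}
acts as zero on $V(\omega_i)$ for every triple $X_1, X_2, X_3 \in \mfg_N$.

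I would next exploit the manifest $\mfg_N$-equivariance of $S$: the coefficients $\al_{\la\mu\nu}$ and the symmetrized monomials $\{X_\la, X_\mu, X_\nu\}$ transform with opposite tensorial behaviour under the simultaneous adjoint action on $(X_1, X_2, X_3)$ and on $\mfU\mfg_N$, so $S$ descends to a $\mfg_N$-equivariant trilinear map $\Sym^3 \mfg_N \to \End(V(\omega_i))$. Decomposing $\Sym^3 \mfg_N$ into $\mfg_N$-isotypic components and invoking Schur's lemma on each, the vanishing of $S|_{V(\omega_i)}$ is equivalent to the vanishing of a finite collection of scalars, each computed by acting with a fixed cubic Casimir-type invariant on the highest weight vector $v_{\omega_i}$.

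The representation-theoretic input is the hypothesis on $m_i$: the case $m_i = 1$ characterizes minuscule fundamental weights (the weights of $V(\omega_i)$ form a single Weyl orbit), while $m_i = (\theta,\theta)/(\al_i,\al_i)$ characterizes quasi-minuscule ones (a single Weyl orbit together with the zero weight, which in our setting covers the natural and spin representations of $\mfso_N$ and the natural together with higher-dimensional fundamentals of $\mfsp_N$). In either case a direct highest-weight calculation, analogous to the ones carried out in Cases I--III of the proof of Proposition \ref{P:Y(g)-rep}, shows that the cubic Casimir-type scalars reduce to zero on $v_{\omega_i}$, giving $S|_{V(\omega_i)} = 0$. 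The case of general $a \in \C$ then follows from the case $a=0$ by transport through the automorphism $\tau_a$ of \eqref{isotau}.

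The main obstacle is this last step: establishing uniformly that the particular combination of cubic $\mfg_N$-invariants appearing in $S$ annihilates $v_{\omega_i}$ precisely when $V(\omega_i)$ is minuscule or quasi-minuscule. This is the calculation whose details Chari and Pressley omit in the proof of \cite[Proposition 12.1.17]{ChPr2}. A viable route is to parametrize $(X_1, X_2, X_3)$ in terms of root vectors and Cartan elements, track the weight components of $S(X_1, X_2, X_3) v_{\omega_i}$, and then use the restrictive weight structure of a minuscule (or quasi-minuscule) $V(\omega_i)$ to force each such component to vanish.
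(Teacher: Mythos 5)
Your reduction is fine as far as it goes: with $J(X)=aX$ the left-hand side of \eqref{J2} vanishes identically, so everything hinges on showing that $\msum_{\la,\mu,\nu}\al_{\la\mu\nu}\{X_\la,X_\mu,X_\nu\}$ annihilates $V(\omega_i)$ for every triple $(X_1,X_2,X_3)$. But that is exactly the hard part, and you have not proved it; you assert that "a direct highest-weight calculation" does the job and then concede that this calculation is "the main obstacle," offering only "a viable route." Worse, that route rests on a false premise: the hypothesis $m_i=1$ or $m_i=(\theta,\theta)/(\al_i,\al_i)$ does \emph{not} characterize the minuscule and quasi-minuscule fundamental weights. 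For $\mfg_N=\mfsp_{2n}$ the condition holds for \emph{every} $i\in\mcI$, whereas only two symplectic fundamentals (the natural representation and the one whose highest weight is the highest short root) are minuscule or quasi-minuscule; the remaining $V(\omega_i)$ have nonzero weights spread over several Weyl orbits. So the "restrictive weight structure" you plan to exploit is absent precisely in the case that demands the most work, and the computations in Cases I--III of Proposition \ref{P:Y(g)-rep}, which concern only $\C^N$, do not generalize. Until the vanishing of your operator $S$ on these modules is actually established, the proof is incomplete.

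For comparison, the paper's proof avoids the cubic computation entirely. It starts from the fundamental Yangian module $V(i;0)$, whose existence is guaranteed by Drinfeld's classification, and studies the $\mfg_N$-module map $\mfg_N\ot V(\omega_i)\to V(i;0)$, $X\ot v\mapsto J(X)v$. The hypothesis on $m_i$ enters through two facts: (i) $\Hom_{\mfg_N}(\mfg_N\ot V(\omega_i),V(\mu))=0$ for every dominant $\mu<\omega_i$, checked case by case (with a genuine highest-weight-vector argument required for the non-quasi-minuscule symplectic fundamentals), and (ii) $\dim\Hom_{\mfg_N}(\mfg_N\ot V(\omega_i),V(\omega_i))=1$. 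These force $V(i;0)=V(\omega_i)$ with $J(X)$ acting as $bX$ for some $b$, and twisting by $\tau_z$ then yields arbitrary $a$, as in your last step. If you wish to keep your direct approach, you must supply the analogue of (i) for the cubic invariant --- in particular a concrete argument for the higher symplectic fundamentals --- and you should also explain why the resulting module is a \emph{fundamental} representation of the Yangian (i.e.\ identify its Drinfeld polynomials), a point your argument does not address but which the paper obtains for free.
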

\begin{remark}
This proposition holds more generally for an arbitrary semisimple Lie algebra $\mfg$, but we restrict ourselves to $\mfg_N$ because this is the case which interests us the most in this paper and a more general proof would require a case-by-case analysis for the exceptional Lie algebras.
\end{remark}
This proposition is not proved in \cite{Dr2}, but a sketch of a proof is given in \cite{ChPr2}. In the particular case when $\mfg_N = \mfsp_N$, there is a proof provided in \cite{AMR} (see Theorem 5.31) except that it omits the details for one important step, so we provide more explanations below. 

\begin{proof}
We start by repeating the beginning of the proof of Proposition 12.1.17 given in \cite{ChPr2}. Consider $V(i;0)$ which is generated by a highest weight vector, so it follows that, as a $\mfg_N$-module, it decomposes as \[ V(\omega_i) \oplus \bigoplus_{\mu < \omega_i} V(\mu)^{\oplus m_{\mu}}, \; m_{\mu} \ge 0, \] where $V(\mu)$ is the irreducible finite-dimensional representation of $\mfg_N$ with highest weight $\mu$. The map $\mfg_N \ot V(\omega_i) \lra V(i;0)$ given by $X\ot v \mapsto J(X)(v)$ is a $\mfg_N$-module homomorphism; it is enough to show that the image of $\mfg_N \ot V(\omega_i)$ under this homomorphism equals $V(\omega_i)$ and that the action of $J(X)$ is given by $aX$.  We need to prove the following two claims:

\noindent (i) If $m_i=1$ or $m_i = \frac{(\theta,\theta)}{(\al_i,\al_i)}$ and $\mu$ is a dominant integral weight less than $\omega_i$, there is no non-trivial $\mfg_N$-module homomorphism $\mfg_N \ot V(\omega_i) \lra V(\mu)$. 

\noindent (ii) The space of $\mfg_N$-module homomorphisms $\mfg_N \ot V(\omega_i) \lra V(\omega_i)$ has dimension one and hence consists of the scalar multiples of the action of $\mfg_N$ on $V(\omega_i)$.

Let us explain now how to prove (i) which, as suggested in \cite{ChPr2}, can be checked using a case-by-case analysis. We will need the following classical fact (see, for instance, exercise 25.33 in \cite{FuHa} or Proposition 3.2 in \cite{Ku}): if $V(\nu_1)$ and $V(\nu_2)$ are finite-dimensional, irreducible representations of a semisimple Lie algebra $\mfg$ with highest weights $\nu_1$ and $\nu_2$, and if $V(\nu)$ is an irreducible component of $V(\nu_1) \ot V(\nu_2)$, then $\nu = \nu_2 + \eta$ where $\eta$ is a weight of $V(\nu_1)$. It follows that, to prove (i), we only have to consider weights $\mu$ of the form $\omega_i - \al$ where $\al$ is a positive root of $\mfg_N$. Information about the multiplicities $m_i$ can be found, for instance, in Appendix C of \cite{Kn}.

{\it Proof of (i) for $\mfg_N=\mfso_{2n+1}$.} In this case, $\theta = - \eps_{n-1} - \eps_n$. The only value of $i$ for which $m_i =1$ is $i=n-1$ and the only one for which $m_i = \frac{(\theta,\theta)}{(\al_i,\al_i)}$ is $i=0$. For these two values $i$, $\omega_i - \al$ is never dominant for any positive root $\al$ of $\mfg_N$,  except that $\omega_i-\alpha=0$ when $i=n-1$ and $\alpha=-\eps_n$. However, $\mfg_N\ot V(\omega_i)$ does not contain the trivial representation as an irreducible component because $V(\omega_i)$ is not isomorphic to the coadjoint representation. Therefore, we can conclude that, provided $i$ is such that $m_i=1$ or $m_i=\frac{(\theta,\theta)}{(\al_i,\al_i)}$, no irreducible component of $\mfg_N \ot V(\omega_i)$ has highest weight $\omega_i -\al$, and thus (i) holds for 
$\mfg_N=\mfso_{2n+1}$.

{\it Proof of (i) for $\mfg_N=\mfso_{2n}$.} All the roots have the same length, so $\frac{(\theta,\theta)}{(\al_i,\al_i)}=1$ for all $i$. $\theta = - \eps_{n-1} - \eps_n$ and the only values of $i$ for which $m_i=1$ are $i=0,1,n-1$.  Observe that  $\omega_i - \al$ is never dominant for $i=0,1,n-1$ and any positive root $\al$ because $(\omega_i,\al)=0$ or $1$ for these values of $i$ and $\al$.

{\it Proof of (i) for $\mfg_N=\mfsp_{N}$.} In this case, $\theta=-2\eps_n$ and the condition $m_i = 1$ or $m_i = \frac{(\theta,\theta)}{(\al_i,\al_i)}$ is satisfied for all $i$.  In general, $\omega_i - \al$ is not dominant except when $\al = -\eps_{i+1} - \eps_{i+2}$ and $0\le i\le n-2$, in which case $\omega_i - \al = \omega_{i+2} \text{ or } 0$. When $i=n-2$ and $\al = -\eps_{n-1} - \eps_n$, we have $\omega_i - \al=0$, but the trivial representation is not a submodule of $\mfg_N \ot V(\omega_i)$ for the same reason as given in the $\mfg_N=\mfso_{2n+1}$ case. Let us suppose that $0\le i \le n-3$. We have to see why $V(\omega_{i+2})$ is not a submodule of $\mfg_N \ot V(\omega_i)$. Suppose that, indeed, it is a submodule. Then it admits a highest weight vector $v_{\mr{max}}$ of the form 
\begin{equation*}
 X_{\eps_{i+1}+\eps_{i+2}} \ot v_{\omega_i} + \msum_{k=0}^{n-1} h_k \ot v^k_{\omega_i + \eps_{i+1} + \eps_{i+2}}  +  \msum_{\substack{\beta\in\Z_{\ge 0}\Delta^+ \\ \be\neq \eps_{i+1} + \eps_{i+2}}} c_{\beta} X_{\eps_{i+1}+\eps_{i+2} + \beta} \ot v_{\omega_i - \beta}, \quad c_{\beta}\in\C. 
 \end{equation*}
Here, $c_{\beta}=0$ if $\eps_{i+1}+\eps_{i+2} + \beta$ is not a root or $\omega_i -\beta$ is not a weight of $V(\omega_i)$,   $X_{\eps_{i+1}+ \eps_{i+2}}$  and $X_{\eps_{i+1}+ \eps_{i+2} + \beta}$ are root vectors for the roots $\eps_{i+1}+ \eps_{i+2}$ and $\eps_{i+1}+ \eps_{i+2} + \beta$,  and $v_{\omega_i - \beta}$ is a weight vector of weight $\omega_i-\beta$. The $v^k_{\omega_i + \eps_{i+1} + \eps_{i+2}}$ are also some weight vectors of weight $\omega_i + \eps_{i+1} + \eps_{i+2}$. Moreover, the weight space $ V(\omega_i)[\omega_i - \al_j]$ is zero unless $\al_j = \al_i$.

Since $v_{\mr{max}}$ is a highest weight vector, $ 0 = X_{\al_{i+1}} (v_{\max}) =  [X_{\al_{i+1}},X_{\eps_{i+1}\eps_{i+2}}] \ot v_{\omega_i} +  X$ where $X$ is a linear combination of tensors of the form $X_{\al} \ot v$ with $v$ a weight vector of $V(\omega_i)$ of weight $<\omega_i$ because $ V(\omega_i)[\omega_i - \al_{i+1}]=0$. This is a contradiction because, for some non-zero factor $c$, $[X_{\al_{i+1}},X_{\eps_{i+1}+\eps_{i+2}}] = c[E_{i+1,i+2} - E_{-i-2,-i-1},E_{i+2,-i-1} + E_{i+1,-i-2}] = 2cE_{i+1,-i-1} \neq 0$, so there is no highest weight vector in $\mfg_N \ot V(\omega_i)$ of weight $\omega_i + (\eps_{i+1} + \eps_{i+2})$.

{\it Proof of (ii).} Since the map $\mfg_N \ot V(\omega_i) \rightarrow V(\omega_i)$ given by $X \ot v \mapsto X(v)$ is a non-zero $\mfg_N$-module homomorphism, $V(\omega_i)$ occurs with multiplicity at least one as a $\mfg_N$-submodule of $\mfg_N \ot V(\omega_i)$. The proof that its multiplicity is exactly one is given by Lemma 2.3 in \cite{ChPr1}.  It follows that $J(X)$ must act as $bX$ for some $b\in\C$ in $V(i;0)$. To obtain a representation where $J(X)$ acts as $aX$ for an arbitrary fixed $a\in\C$, we twist the representation $V(i;0)$ by the automorphism $\tau_{(a-b)\zeta^{-1}}$ (see \eqref{isotau}), which coincides with $V(i;a-b)$.
\end{proof}

 Let $d_i=1$ for $1\leq i\leq n-1$, and set $d_0=\tfrac{1}{2}$ if $\mfg_N=\mfso_{2n+1}$, $d_0=2$ if $\mfg_N=\mfsp_{2n}$, and $d_0=1$ if $\mfg_N=\mfso_{2n}$. The proof of Proposition \ref{P:fundrepYang} leads to the following corollary.
\begin{corollary}\label{C:J(X)-b}
Suppose that $m_i$ with $i\in \mcI$ satisfies the conditions of Proposition \ref{P:fundrepYang}, and let $a\in \C$. Then, viewed as a $\mfg_N$-module, the fundamental representation $V(i;a)$  is isomorphic to $V(\omega_i)$. Its $Y_\zeta(\mfg_N)$-module structure is obtained by  allowing $J(X)$ to operate as $bX$ for all $X\in \mfg_N$, where 
\begin{equation}
 b=d_i a+ \tfrac{\zeta d_i}{2}(\ka-d_i). \label{J(X)-b}
\end{equation}
\end{corollary}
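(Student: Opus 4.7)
The proof plan is to compute the unique scalar $b$ by evaluating the action of $J(h_i)$ on a highest weight vector $\xi$ of $V(i;a)$. From Proposition \ref{P:fundrepYang}, we know $V(i;a)\cong V(\omega_i)$ as a $\mfg_N$-module and that $J(X)$ acts as $bX$ for a unique $b\in\C$; choosing $X = h_i$ and using $h_i\xi = d_i\xi$, one side of the identity reads $J(h_i)\xi = bd_i\,\xi$. The task reduces to computing the left-hand side in another way.

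First, I would invoke the isomorphism $\Phi_{\mr{cr},J}$ of Theorem \ref{T:Ycr(g)-} to rewrite $J(h_i) = h_{i,1} + \zeta\, v_i$, with $v_i$ as in \eqref{isoYY1}. The eigenvalue of $h_{i,1}$ on $\xi$ is dictated by the Drinfeld polynomial $P_i(u) = u-a$ through the classification invoked in Definition \ref{Funrep} (formula \eqref{Dr-class}). For $v_i\xi$, I would exploit that $x_\al^+\xi = 0$ for every $\al\in\Delta^+$, reducing $\{x_\al^+,x_\al^-\}\xi$ to $[x_\al^+,x_\al^-]\xi = t_\al\,\xi = (\al,\om_i)\,\xi$; combined with $h_i^2\xi = d_i^2\xi$ this yields
\eqn{
v_i\xi = \left(\tfrac{1}{4}\msum_{\al\in\Delta^+}(\al,\al_i)(\al,\om_i) - \tfrac{1}{2}d_i^2\right)\xi.
}

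The next step is to evaluate the root sum $S := \msum_{\al\in\Delta^+}(\al,\al_i)(\al,\om_i)$. The identity $\msum_{\al\in\Delta}(\al,\be)(\al,\ga) = B(t_\be,t_\ga)$ for the Killing form $B$ of $\mfg_N$, combined with the fact (recalled in \eqref{Omega}) that the Casimir $\msum_{i+j(\ge)0}X_{ij}X_{ij}$ operates on the adjoint representation by $4\ka$, identifies $B$ with $4\ka\,(\cdot,\cdot)$, and therefore $S = 2\ka\,d_i$ since $(\al_i,\om_i) = d_i$. Consequently $v_i\xi = \tfrac{d_i(\ka-d_i)}{2}\,\xi$, and combining with the value of $\lambda_{i,1}$ coming from the Drinfeld-polynomial datum produces a linear equation whose solution is the stated formula.

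As a cross-check, the special case $a=0$ can be verified directly to give $b_0 = \tfrac{\zeta d_i}{2}(\ka - d_i)$, and the general case can then be recovered via the twist automorphism $\tau_z$ of \eqref{isotau} used at the end of the proof of Proposition \ref{P:fundrepYang}, since applying $\tau_z$ to $V(i;0)$ yields $V(i;z\zeta)$ while controllably modifying the scalar $b$. The main obstacle I expect is the careful reconciliation between the Drinfeld-polynomial convention of Definition \ref{Funrep} and the resulting eigenvalues of $h_{i,r}$ on $\xi$, since both $d_i$ factors in the formula must issue from this step.
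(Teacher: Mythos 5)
Your overall strategy is exactly the paper's: pull $J(h_i)$ back through $\Phi_{\mr{cr},J}$ to $h_{i1}+\zeta v_i$, read off $h_{i1}\xi=d_i a\,\xi$ from the Drinfeld polynomial, and evaluate $v_i\xi$ using $x_\al^+\xi=0$. Your computation of $v_i\xi$ is correct and is a nice self-contained variant: the paper instead quotes the identity $v_j=\tfrac{\ka}{2}h_j+\tfrac12\sum_{\al\in\Delta^+}(\al,\al_j)x_\al^-x_\al^+-\tfrac12 h_j^2$ from \cite{GNW} (with a normalization caveat in type C), whereas you rederive the same constant $2\ka d_i$ for the root sum $\sum_{\al\in\Delta^+}(\al,\al_i)(\al,\om_i)$ via the Killing form and the Casimir eigenvalue $4\ka$; both give $v_i\xi=\tfrac{d_i}{2}(\ka-d_i)\xi$.

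The problem is the last step, precisely where you flag uncertainty. You set up the left-hand side as $J(h_i)\xi=b\,h_i\xi=b d_i\,\xi$, which is the correct literal reading of ``$J(X)$ operates as $bX$'' since $h_i\xi=\om_i(h_i)\xi=d_i\xi$. Equating this with $(h_{i1}+\zeta v_i)\xi=\bigl(d_i a+\tfrac{\zeta d_i}{2}(\ka-d_i)\bigr)\xi$ gives $b=a+\tfrac{\zeta}{2}(\ka-d_i)$, which is the stated formula \eqref{J(X)-b} \emph{divided by} $d_i$ — not ``the stated formula'' as you assert. The two agree only when $d_i=1$, i.e.\ they differ exactly for $i=0$ in types B and C. The paper's proof avoids this by declaring that it suffices to show $(h_{i1}+\zeta v_i)\xi=b\xi$; in other words the $b$ of \eqref{J(X)-b} is being used as the eigenvalue of $J(h_i)$ on the highest weight vector (this is also how \eqref{a<->b} is applied in the proof of Theorem \ref{T:cr-R} for the module $V^R(1;2)$ in type C, where the operator scalar is $(1-n)/2$ but $b=1-n$ is fed into the formula). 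So your argument is essentially complete, but to actually land on \eqref{J(X)-b} you must either replace ``$J(h_i)\xi=bd_i\xi$'' by ``$J(h_i)\xi=b\xi$'' (adopting the convention implicit in the paper's proof and later applications), or else accept that under your reading the operator scalar is $a+\tfrac{\zeta}{2}(\ka-d_i)$ and note that \eqref{J(X)-b} is $d_i$ times that. Leaving the final equation unsolved, as you do, hides a genuine $d_i$ mismatch rather than a mere bookkeeping detail.
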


\begin{proof}
Aside from the specific relationship \eqref{J(X)-b}, the corollary has been proven when $a=0$ in the proof of Proposition \ref{P:fundrepYang}, and in the general case the same argument applies. It is left to show that $b$ is related to $a$ via the relation \eqref{J(X)-b}. Let $(\lambda_i^r)_{i\in \mcI,r\geq 0}$ denote the highest weight of the $Y_\zeta^\cur(\mfg_N)$-module $V(i;a)$, and $\xi$ the highest weight vector (see Section \ref{sec:rep}). That is, $\xi$ generates $V(i;a)$, while $x_{ir}^+\xi=0$ and $h_{ir}\xi=\lambda_i^r \xi$ are satisfied for all $i\in \mcI$ and $r\geq 0$. The fact that $V(i;a)$ corresponds to the Drinfeld tuple $(P_j(u))_{i\in\mcI}$ with $P_i(u)=u-a$ and $P_j(u) = 1$ for $j\neq i$ means precisely that 
\begin{equation*}
 1+\zeta\msum_{r=0}^\infty \lambda_j^ru^{-r-1}=\begin{cases}
                                               \frac{u-a+\zeta d_{j}}{u-a} & \text{ if } i=j,\\
                                                1 & \text{ otherwise}.
                                              \end{cases}
\end{equation*}
Expanding the right-hand side of the above equality as an element of $\C[[u^{-1}]]$ and comparing coefficients, we deduce that $\lambda_j^r=\delta_{ij} d_j a^r$ for all $j\in \mcI$ and $r\geq 0$. Via the isomorphism $\Phi_{\mr{cr},J}$ of Theorem \ref{T:Ycr(g)-}, $J(h_i)$ corresponds to $h_{i1}+\zeta v_i$, and thus to complete the proof we just need to show $(h_{i1}+\zeta v_i)\xi=b \xi$, where $b$ is defined by \eqref{J(X)-b}. 

In \cite[3(ii)]{GNW}, it was shown that $v_j=\tfrac{1}{4}h^\vee h_j+\frac{1}{2}\sum_{\al\in \Delta^+}(\al,\al_j)x_\al^- x_\al^+ -\tfrac{1}{2}h_j^2$ for all $j\in \mcI$, provided $(\cdot,\cdot)$ has been normalized such that a long root has length 2. In the $\mfg_N=\mfsp_N$ case, the latter property does not hold so we must replace $h^\vee$ with $2h^\vee$ to account for this discrepancy. Here $h^\vee$ is the dual Coxeter number of $\mfg_N$: it is equal to $2n-1$ if $\mfg_N=\mfso_{2n+1}$, to $2n-2$ if $\mfg_N=\mfso_{2n}$, and to $n+1$ if $\mfg_N=\mfsp_{2n}$. After multiplying the $h^\vee$ in the $\mfsp_{2n}$ case by $2$, we see that these scalars coincide with $2\ka$. As $h_{i1}\xi=d_i a \xi$, 
$h_{i0}\xi=d_i \xi $
and $x_\al^+ \xi=0$ for all $\al\in \Delta^+$, we obtain 
\begin{equation*}
 (h_{i1}+\zeta v_i)\xi=d_i a\xi +\zeta (\tfrac{1}{4}2\ka h_i -\tfrac{1}{2}h_i^2)\xi= d_i a \xi+\tfrac{\zeta d_i}{2}(\ka-d_i)\xi. \qedhere
\end{equation*}
\end{proof}


\subsection{\texorpdfstring{$RTT$}{RTT}-presentation}

There is yet another presentation of the Yangian of $\mfg_N$, which we denote by $Y^R_{\zeta}(\mfg_N)$ and which is based on the $R$-matrix $R(u)\in\End((\C^N)^{\ot2})[[u^{-1}]]$ associated to its representation on $\C^N$ \cite{KuSk,ZaZa}:
\eq{
R(u) = I - \frac{\zeta}{u}\, P + \frac{\zeta}{u-\zeta\,\ka}\, Q , \qu\text{where}\qu \ka = \frac N2 (\mp) 1. \label{R(u)}
}
It is a solution of the quantum Yang-Baxter equation on $(\C^N)^{\ot3}$, that is, 
\eq{
R_{12}(u-v)\,R_{13}(u)\,R_{23}(v) = R_{23}(v)\,R_{13}(u)\,R_{12}(u-v) . \notag
}

\begin{definition} \cite{AACFR}\label{D:Y(g)-RTT}
The Yangian $Y^R_{\zeta}(\mfg_N)$ is the unital associative $\C$-algebra generated by the coefficients $t_{ij}^{(r)}$, with $-n \le i,j \le n$ and $r\in\Z_{\ge 0}$, of the formal series $t_{ij}(u) = \del_{ij} + \sum_{r\ge1} t_{ij}^{(r)} \zeta^r u^{-r}$ satisfying the following relations in $Y^R_\zeta(\mfg_N)[[u^{-1},v,v^{-1}]]$ and $Y^R_\zeta(\mfg_N)[[u^{-1}]]$, respectively:
\begin{gather} 
[\, t_{ij}(u),t_{kl}(v)] =\frac{\zeta}{u-v}
\Big(t_{kj}(u)\, t_{il}(v)-t_{kj}(v)\, t_{il}(u)\Big)-\frac{\zeta}{u-v-\zeta\,\ka}
\msum_{a} \Big(\delta_{k,-i}\,\theta_{ia}\, t_{aj}(u)\, t_{-a,l}(v)-
\delta_{l,-j}\,\theta_{ja}\, t_{k,-a}(v)\, t_{ia}(u)\Big) , \label{RTT:BCD}
\\
\msum_a t_{ia}(u)\,t_{-j,-a}(u+\zeta\,\ka) = \msum_a t_{-a,-i}(u+\zeta\,\ka)\,t_{aj}(u) = \del_{ij} . \label{unitary}
\end{gather}
\end{definition}

\begin{proposition} \label{P:RTT} \cite{AACFR}
We can collect the series $t_{ij}(u)$ into the generating matrix $T(u)= \sum_{i,j} E_{ij}\ot t_{ij}(u)$. Then the defining relations of $Y^R_\zeta(\mfg_N)$ are equivalent to
\begin{gather} 
R(u-v)\,T_1(u)\,T_2(v) = T_2(v)\,T_1(u)\,R(u-v) , \label{RTT} \\
T^{t}(u+\zeta\,\ka)\, T(u) = T(u)\, T^{t}(u+\zeta\,\ka) = I. \label{TT=TT=I}
\end{gather} 
Furthermore, $Y^R_\zeta(\mfg_N)$ admits a Hopf algebra structure with coproduct $\Delta : t_{ij}(u) \mapsto \sum_k t_{ik}(u) \ot t_{kj}(u)$, antipode $S : T(u) \mapsto T^{-1}(u)$ and counit $\eps: T(u) \mapsto I$.
\end{proposition}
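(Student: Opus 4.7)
My approach splits the proof into two stages: first the equivalence of the component relations \eqref{RTT:BCD}--\eqref{unitary} with the matrix relations \eqref{RTT}--\eqref{TT=TT=I}, and then a verification of the Hopf algebra axioms using matrix manipulations. For the equivalence, I would expand $R(u-v)=I-\tfrac{\zeta}{u-v}P+\tfrac{\zeta}{u-v-\zeta\ka}Q$ using \eqref{PQ} and read off the coefficient of $E_{ij}\ot E_{kl}$ on each side of \eqref{RTT}. The $I$ contributions cancel, leaving the commutator $[t_{ij}(u),t_{kl}(v)]$; the $P$-terms contribute $\tfrac{\zeta}{u-v}\bigl(t_{kj}(u)t_{il}(v)-t_{kj}(v)t_{il}(u)\bigr)$, while the $Q$-terms, whose entries carry $\theta_{ij}$-factors together with the reflections $i\mapsto-i$, $j\mapsto-j$, reproduce (after relabelling the summation index) the two sums over $a$ on the right-hand side of \eqref{RTT:BCD}. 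For \eqref{unitary}$\Leftrightarrow$\eqref{TT=TT=I}, unwind the matrix products $T^t(u+\zeta\ka)T(u)$ and $T(u)T^t(u+\zeta\ka)$ in components using $(E_{ij})^t=\theta_{ij}E_{-j,-i}$; the two equalities in \eqref{unitary} then correspond to the $(i,j)$-entries of these products equalling $\delta_{ij}$, after bookkeeping the $\theta$-factors.

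For the Hopf algebra structure, I would write $\Delta(T(u))=T^{[1]}(u)\cdot T^{[2]}(u)$ as a matrix product, where $T^{[k]}(u)$ denotes the generating matrix with its entries placed in the $k$-th copy of $Y^R_\zeta(\mfg_N)$ inside $Y^R_\zeta(\mfg_N)\ot Y^R_\zeta(\mfg_N)$, so that entries of $T^{[1]}$ and $T^{[2]}$ commute. To see that $\Delta$ preserves \eqref{RTT}, apply it to both sides, reorder $T_1^{[2]}T_2^{[1]}$ and $T_2^{[2]}T_1^{[1]}$ using this commutativity, and then invoke \eqref{RTT} once in each copy. To see that $\Delta$ preserves \eqref{TT=TT=I}, I would first establish the auxiliary identity $(A\cdot B)^t=B^t\cdot A^t$ for matrices whose entries lie in mutually commuting subalgebras; this reduces to the elementary relation $\theta_{i,-k}\theta_{-k,j}=\theta_{ij}$, which holds in both the orthogonal and symplectic cases. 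Consequently $\Delta(T^t(u+\zeta\ka))=(T^{[2]}(u+\zeta\ka))^t(T^{[1]}(u+\zeta\ka))^t$, and \eqref{TT=TT=I} applied in each copy collapses $\Delta\bigl(T^t(u+\zeta\ka)T(u)\bigr)$ to $I$. The counit $\veps(T(u))=I$ visibly respects both relations. For the antipode, \eqref{TT=TT=I} supplies the two-sided inverse $T^{-1}(u)=T^t(u+\zeta\ka)$, so setting $S(T(u))=T^{-1}(u)$ defines $S$ on the generators; that $S$ extends to an algebra antihomomorphism is verified by applying it to \eqref{RTT} and \eqref{TT=TT=I} and using the inverse, and the antipode axiom $(S\ot\id)\circ\Delta(T(u))=T^{-1}(u)T(u)=I$ is then immediate.

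The main technical obstacle is the componentwise bookkeeping for the $Q$-term in \eqref{RTT:BCD}, and for the $\theta$-factors arising in $(A\cdot B)^t=B^t\cdot A^t$; both are purely combinatorial exercises in manipulating the signs encoded by $\theta_{ij}$ and require no deeper conceptual input.
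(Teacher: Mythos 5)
The paper offers no proof of this proposition --- it is quoted from \cite{AACFR} (see also \cite{AMR}) --- so there is nothing internal to compare your argument against. Your outline is the standard proof and is essentially complete: the coefficient extraction identifying \eqref{RTT} with \eqref{RTT:BCD} (using $P(E_{ij}\ot E_{kl})=E_{kj}\ot E_{il}$ and the analogous formula for $Q$), the computation $\Delta(T(u))=T^{[1]}(u)T^{[2]}(u)$ together with the reordering argument for \eqref{RTT}, the identity $(AB)^t=B^tA^t$ for matrices with mutually commuting entries (which indeed rests on $\theta_{i,-k}\theta_{-k,j}=\theta_{ij}$), and the antipode argument via inversion of \eqref{RTT} are all correct.

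The one place where ``bookkeeping the $\theta$-factors'' conceals a genuine issue is the equivalence of \eqref{unitary} with \eqref{TT=TT=I}. Since $(T^t(w))_{ij}=\theta_{ij}\,t_{-j,-i}(w)$, the $(i,j)$ entry of $T^t(u+\zeta\ka)\,T(u)$ is $\sum_a \theta_{ia}\,t_{-a,-i}(u+\zeta\ka)\,t_{aj}(u)$, and that of $T(u)\,T^t(u+\zeta\ka)$ is $\sum_a \theta_{aj}\,t_{ia}(u)\,t_{-j,-a}(u+\zeta\ka)$. In the orthogonal case $\theta\equiv 1$ and these match \eqref{unitary} exactly; in the symplectic case the factors $\theta_{ia}=\mathrm{sign}(i)\,\mathrm{sign}(a)$ do not cancel, and \eqref{unitary} as printed (without them) is instead the component form of $T^t(u+\zeta\ka)\,D\,T(u)=D$ with $D=\mathrm{diag}(\mathrm{sign}(a))$, i.e.\ of $T^t(u+\zeta\ka)=D\,T(u)^{-1}D$ rather than $T^t(u+\zeta\ka)=T(u)^{-1}$ --- a genuinely different relation that no sign manipulation will convert into \eqref{TT=TT=I}. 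The correct reading, consistent with \cite{AMR} and with the paper's own later use of \eqref{TT=TT=I} (the derivation of \eqref{t^2:tr} in the proof of Proposition \ref{T:J->R} requires the $\theta_{aj}$ inside the sum), is that the unitarity relation is the matrix relation, so the sums in \eqref{unitary} should carry the factors $\theta_{ia}$ and $\theta_{aj}$ respectively. Make that convention explicit before running your component comparison; otherwise the symplectic half of your first stage does not close.
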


Introduce an ascending filtration on the Yangian $Y^R_\zeta(\mfg_N)$ by setting $\deg t^{(r)}_{ij} = r-1$ for $r\ge 1$ and denote by $\bar t^{(r)}_{ij}$ the images of the elements $t^{(r)}_{ij}$ in the $(r-1)$-th component of the associated graded algebra $\gr Y^R_\zeta(\mfg_N)$. Then, by Theorem~3.6 of \cite{AMR}, the map defined by
\eq{
\psi \;:\; \mfU\mfg_N[s] \to  \gr Y^R_\zeta(\mfg_N), \qu F^{(r-1)}_{ij} \mapsto \bar t^{\,(r)}_{ij}  \label{iso:grYR-Ug}
}
for all $-n \le i,j \le n$ and $r\ge1$ is an isomorphism of algebras. As a consequence, we have an embedding $\mfU\mfg_N \into Y^R_\zeta(\mfg_N)$ which identifies $F_{ij}$ with $t_{ij}^{(1)}$ - see Proposition 3.11 of \cite{AMR}.


\subsection{Universal \texorpdfstring{$R$}{R}-matrix}

Our main goal in this subsection is to relate $R(u)$ to the universal $R$-matrix of $Y_{\zeta}(\mfg_N)$.  A precise relation will be given in Proposition \ref{C:RmcR}, which itself can be viewed as a particular case of Theorem \ref{RmcR}. The latter of these two results is valid for any $\mfg$ and first appeared in \cite{Dr1} without proof: see Theorem 4 therein. Let us begin by introducing some useful notation and the universal $R$-matrix $\mcR(u)$ of $Y_\zeta(\mfg)$ for any complex simple Lie algebra $\mfg$.  

For each $z\in \C$, denote by $\tau_z$ the automorphism of $Y_\zeta(\mfg)$ defined analogously to that in \eqref{isotau} with $\hbar$ specialized to~$\zeta$. Given an arbitrary $Y_\zeta(\mfg)$-module $V$ with corresponding homomorphism $\varrho:Y_\zeta(\mfg)\to \End(V)$, we set $V_z = \tau_z^*(V)$ and denote by $\varrho_z$ the resulting algebra homomorphism $Y_\zeta(\mfg) \rightarrow \End(V_z)$. We also set $\tau_{a,b} = \tau_{a} \ot \tau_{b}$ and $\varrho_{a,b} = \varrho_a \ot \varrho_b$ for all $a,b\in \C$.

Let $\si : w_1 \ot w_2  \mapsto w_2 \ot w_1$ denote the permutation operator on the tensor product $W \ot W$ of any vector space $W$. 

\begin{theorem}[\cite{Dr1}, Theorem 3]\label{uniR}
There is a unique formal series $\mcR(u) = 1 + \sum_{k=1}^{\infty} \mcR_k \,\zeta^k u^{-k}$ with $\mcR_k \in Y_\zeta(\mfg) \ot Y_\zeta(\mfg)$ such that $(id \ot \Delta)\,\mcR(u) = \mcR_{12}(u)\,\mcR_{13}(u)$, $ (id\ot S)\,\mcR(u)=\mcR^{-1}(u)$ and 
\eq{ 
\tau_{0,u\zeta^{-1}} \Delta'(Y) = \mcR(u)^{-1} (\tau_{0,u\zeta^{-1}}\Delta(Y))\, \mcR(u) \qu\text{for all}\qu Y \in Y_\zeta(\mfg). \label{UR:1}  
}
where $\Delta' = \si \circ \Delta$.  This $\mcR(u)$ satisfies the universal quantum Yang-Baxter equation 
\eq{ 
\mc{R}_{12}(u-v)\,\mc{R}_{13}(u)\,\mc{R}_{23}(v) = \mc{R}_{23}(v)\,\mc{R}_{13}(u)\,\mc{R}_{12}(u-v) \label{UYBE}
}
and is called the universal $R$-matrix. Moreover, 
\begin{gather} 
\mcR_{12}(u)\, \mcR_{21}(-u)=1, \qq \tau_{a\zeta^{-1},b\zeta^{-1}}\mcR(u)=\mcR(u+b-a), \label{RR=I} \\
\ln \mcR(u) = \zeta u^{-1} \msum_{\la\in\Lambda} X_{\la} \ot X_{\la} + \zeta u^{-2} \msum_{\la\in\Lambda} \big(J(X_{\la}) \ot X_{\la} - X_{\la} \ot J(X_{\la})\big) + O(u^{-3}), \label{lnR}
\end{gather} 
where $\{X_\la\}_{\la\in\Lambda}$ is an orthonormal basis of $\mfg$ with respect to the given bilinear form and $\Lambda$ is an indexing set. 
\end{theorem}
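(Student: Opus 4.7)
The plan is to follow Drinfeld's original strategy: establish uniqueness from the intertwiner equation, construct $\mcR(u)$ order by order in $u^{-1}$, and then derive all remaining properties as consequences of uniqueness combined with observations that certain expressions satisfy the same intertwining relation.

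First I would prove uniqueness. Suppose $\mcR(u)$ and $\mcR'(u)$ both satisfy \eqref{UR:1} with leading term $1$. Then $F(u) := \mcR'(u)\mcR(u)^{-1} = 1+O(u^{-1})$ commutes with $\tau_{0,u\zeta^{-1}}\Delta(Y)$ for all $Y\in Y_\zeta(\mfg)$. Using the grading on $Y_\zeta(\mfg)$ (Proposition~\ref{grYzeta}) together with the fact that the image of $\Delta$ generates a subalgebra whose centralizer in an appropriate completion of $Y_\zeta(\mfg)^{\otimes 2}[[u^{-1}]]$ is trivial modulo scalar series, one concludes $F(u)=1$. This argument reduces, via the associated graded isomorphism to $\mf{Ug}[s]^{\otimes 2}$, to the classical fact that the centralizer of the diagonal $\mf{Ug}$ in $\mf{Ug}\otimes \mf{Ug}$ (after suitable translation) is the center, which is killed by the normalization condition.

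Next I would construct $\mcR(u)$. Writing $\mcR(u) = 1 + \sum_{k\geq 1}\mcR_k\zeta^k u^{-k}$ and expanding \eqref{UR:1} in powers of $u^{-1}$ gives a hierarchy of equations. The leading order forces $[\mcR_1, \Delta(Y)] = [Y\ot 1 - 1\ot Y, \square(Y)]/\text{(constants)}$ type relation, which is solved by $\mcR_1 = \msum_\lambda X_\lambda \ot X_\lambda$ (the canonical element of the quadratic Casimir), confirming \eqref{lnR} to first order. At each subsequent order, the equation for $\mcR_k$ has the form $[\mcR_k, \square(Y)] = (\text{known})$ and is solvable precisely because the lower-order terms already cancel the obstruction; this is where the cocommutator $\Delta'-\Delta$ matching the coproduct on $\mf{Ug}[s]$ (via Proposition~\ref{grYzeta}) enters. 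The second-order term $J(X_\lambda)\ot X_\lambda - X_\lambda\ot J(X_\lambda)$ arises from precisely the defect between $\Delta$ and $\Delta'$ applied to $J(X)$. The recursion determines $\mcR_k$ uniquely up to a central shift, killed by the normalization $\mcR(u)\to 1$ as $u\to\infty$.

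The remaining properties then follow from uniqueness. For the coproduct property, both $(\id\ot\Delta)\mcR(u)$ and $\mcR_{12}(u)\mcR_{13}(u)$ are normalized power series in $u^{-1}$ satisfying the intertwiner relation associated to $\tau_{0,u\zeta^{-1}}(\id\ot\Delta)\circ\Delta$; by the three-fold analog of uniqueness, they coincide. The QYBE \eqref{UYBE} follows by applying this coproduct identity to both sides of a suitably chosen intertwiner relation. The unitarity $\mcR_{12}(u)\mcR_{21}(-u)=1$ follows because $\mcR_{21}(-u)^{-1}$ satisfies the same defining relation as $\mcR_{12}(u)$ (swap factors, send $u\mapsto -u$, invert). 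The translation covariance $\tau_{a\zeta^{-1},b\zeta^{-1}}\mcR(u) = \mcR(u+b-a)$ is immediate from how $\tau_z$ interacts with \eqref{UR:1}: the equation for the left side coincides with that for $\mcR(u+b-a)$.

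The main obstacle will be the inductive step in the existence argument: verifying that the integrability condition (the right-hand side of the recursion lies in the image of $[\cdot,\square(Y)]$) holds at each order. This amounts to showing an acyclicity statement for the Hochschild-type complex computing $\mfg$-invariants of $Y_\zeta(\mfg)^{\otimes 2}$, for which one uses Proposition~\ref{grYzeta} to reduce to the corresponding statement for $\mf{Ug}[s]^{\otimes 2}$, where the cocommutator of the standard bialgebra structure on $\mfg[s]$ controls the cohomology class of the obstruction and its vanishing follows from the fact that the classical double of $\mfg[s]$ splits as required.
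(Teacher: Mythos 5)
First, note that the paper itself does not prove this statement: Theorem~\ref{uniR} is quoted from \cite{Dr1} (Drinfeld's Theorem 3) without proof, and the authors only supply proofs for the downstream results (Theorem~\ref{RmcR}, Proposition~\ref{C:RmcR}). So there is no in-paper argument to compare yours against; your proposal has to stand on its own, and it does not.

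The genuine gap is the existence step, and you have in effect flagged it yourself without closing it. Expanding \eqref{UR:1} in powers of $u^{-1}$ does produce a hierarchy $[\mcR_k,\square(X)]=0$, $[\mcR_{k+1},1\ot X]=(\text{terms in }\mcR_{\le k})$ coming from $\tau_{0,u\zeta^{-1}}\Delta(J(X))=\square(J(X))+u(1\ot X)+\tfrac{\zeta}{2}[X\ot 1,\Omega]$, but the claim that the right-hand side ``lies in the image of $[\cdot,\square(Y)]$'' at every order is precisely the content of Drinfeld's theorem, not something that follows from ``acyclicity of a Hochschild-type complex'' or from ``the classical double of $\mfg[s]$ splits.'' The relevant cohomological obstruction space does not obviously vanish, the solution at each order is only determined up to the centralizer of the image of the shifted coproduct, and one must choose these ambiguities coherently so that the cabling identity $(\id\ot\Delta)\mcR=\mcR_{12}\mcR_{13}$ and $(\id\ot S)\mcR=\mcR^{-1}$ also hold; without those two extra conditions the intertwining equation alone does not pin $\mcR(u)$ down. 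Your uniqueness argument has the same soft spot: the centralizer of $\Delta(\mf{Ug})$ in $\mf{Ug}\ot\mf{Ug}$ is far larger than the center (it contains $\Omega$, for instance), and controlling the centralizer of the $u$-shifted coproduct in a completion of $Y_\zeta(\mfg)^{\ot 2}[[u^{-1}]]$ requires a real argument (e.g.\ triviality of the center of $Y_\zeta(\mfg)$ plus an inductive analysis of $[F_k,1\ot X]$), not an appeal to the normalization. This is exactly why Theorem 3 of \cite{Dr1} remained without a published proof for decades; the arguments that do work go through the quantum double of the Yangian (pairing $Y_\zeta(\mfg)$ with a dual Yangian and exhibiting $\mcR$ as the canonical tensor) or through Gauss-type factorizations of $\mcR(u)$, not through the term-by-term recursion you propose. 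The final part of your sketch --- deriving \eqref{UYBE}, \eqref{RR=I} and the translation covariance from uniqueness once existence and the cabling identity are granted --- is the standard quasi-triangularity argument and is fine in outline, but it rests entirely on the unproven core.
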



Theorem 4 in \cite{Dr1} is the special case of the next theorem when $V=W$. Its proof relies on Theorem \ref{uniR} above.

\begin{theorem}[\cite{Dr1}, Theorem 4]\label{RmcR}
Let $V,W$ be two irreducible representation of $Y_\zeta(\mfg)$ with corresponding homomorphisms $\varrho^V:Y_\zeta(\mfg)\to \End(V)$ and $\varrho^W:Y_\zeta(\mfg)\to \End(W)$. Then, up to multiplication by a formal power series in $u^{-1}$, $R^{VW}(u)=(\varrho^V\ot \varrho^W)(\mathcal{R}(-u))$ is the unique solution $\ms{R}(u)\in \End(V\ot W)[[u^{-1}]]$ of the equation 
\begin{equation}
(\varrho^V_{u\zeta^{-1}}\otimes \varrho^W_{v\zeta^{-1}}) (\Delta(J(X)))\,\ms{R}(u-v)=\ms{R}(u-v)\,(\varrho^V_{u\zeta^{-1}}\otimes \varrho^W_{v\zeta^{-1}}) (\Delta'(J(X)))\; \text{ for all }\; X\in \mfg.\label{PRRP}
\end{equation}
 Moreover, there exists $f(u)\in 1+u^{-1}\C[[u^{-1}]]$ such that the rescaled $R$-matrix $f(u)\,R^{VW}(u)$ is rational. 
\end{theorem}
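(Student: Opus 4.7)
The plan has three parts: existence, uniqueness up to a scalar formal series, and rationality. For existence, I would derive \eqref{PRRP} for $R^{VW}(u) := (\varrho^V \ot \varrho^W)(\mathcal{R}(-u))$ directly from \eqref{UR:1}. Rearranging \eqref{UR:1} gives the universal identity $\mathcal{R}(w)\cdot\tau_{0,w\zeta^{-1}}\Delta'(J(X)) = \tau_{0,w\zeta^{-1}}\Delta(J(X))\cdot\mathcal{R}(w)$ in $Y_\zeta(\mfg)\ot Y_\zeta(\mfg)$. Applying the diagonal twist $\tau_{u\zeta^{-1}} \ot \tau_{u\zeta^{-1}}$ preserves $\mathcal{R}(w)$ by the second identity in \eqref{RR=I}, while transporting the shift on the second tensor factor to $(u+w)\zeta^{-1}$ and introducing shift $u\zeta^{-1}$ on the first. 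Setting $w = v - u$ and composing with $\varrho^V \ot \varrho^W$ reproduces \eqref{PRRP}, since $(\varrho^V \ot \varrho^W)(\mathcal{R}(v-u)) = R^{VW}(u-v)$.

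For uniqueness, suppose $\ms{R}(u)$ is any solution of \eqref{PRRP}. Then $A(u,v) := \ms{R}(u-v)\,R^{VW}(u-v)^{-1}$ commutes with $(\varrho^V_{u\zeta^{-1}} \ot \varrho^W_{v\zeta^{-1}})\Delta(J(X))$ for every $X \in \mfg$. Since $\varrho^V_{u\zeta^{-1}}(J(X)) = \varrho^V(J(X)) + u\,\varrho^V(X)$ by \eqref{isotau} (and similarly for $W$), separating the linear and constant pieces in $u$ and $v$ shows that $A(u,v)$ also commutes with $\varrho^V(X) \ot 1$ and $1 \ot \varrho^W(X)$. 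Combining this with the relation $[X, J(Y)] = J([X, Y])$ from \eqref{J0}, which ensures that $\mfg$ and $\{J(X):X\in\mfg\}$ together generate $Y_\zeta(\mfg)$, exhibits $A(u,v)$ as a $Y_\zeta(\mfg)$-intertwiner of $V_{u\zeta^{-1}} \ot W_{v\zeta^{-1}}$. Invoking the standard generic-irreducibility theorem for tensor products of finite-dimensional irreducible Yangian modules (as in \cite{ChPr2}), $A$ must be a scalar $f(u-v) \in \C((u^{-1}))$, and hence $\ms{R}(u) = f(u)\,R^{VW}(u)$.

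For rationality, I would fix $v = 0$ and recast \eqref{PRRP} as a finite linear system $M(u)\vec r(u) = 0$ for the entries of $\ms{R}(u)$, where $M(u)$ has entries depending linearly on $u$ (the only $u$-dependence comes through $\varrho^V_{u\zeta^{-1}}$, which is affine in $u$). By the uniqueness step, $\ker M$ is one-dimensional over $\C(u)$, so Cramer's rule applied to a maximal nonvanishing minor produces a solution $\tilde R(u)$ whose entries are polynomials in $u$. Since $R^{VW}(u) = \id + O(u^{-1})$ by \eqref{lnR}, dividing $\tilde R(u)$ by one of its entries whose value at $u=\infty$ is nonzero yields $f(u)\,R^{VW}(u)$ rational, with the scalar $f(u) \in 1 + u^{-1}\C[[u^{-1}]]$, as required. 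The main obstacle throughout is the generic-irreducibility input invoked in the uniqueness step: this is not automatic from the irreducibility of $V$ and $W$ and requires a genuine representation-theoretic argument, and it is simultaneously what guarantees that the rank deficit of $M(u)$ is exactly one over the field $\C(u)$ (not merely at a generic point), which is essential for the Cramer's-rule step to produce a well-defined rational solution.
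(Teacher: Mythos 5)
Your overall strategy coincides with the paper's: existence of $R^{VW}(u)$ as a solution of \eqref{PRRP} via \eqref{UR:1} and \eqref{RR=I}, uniqueness by reducing to a Schur's-lemma argument on a generically irreducible tensor product, and rationality via the finite linear system whose coefficient matrix is affine in $u$. The genuine gap is in the uniqueness step: the generic irreducibility of $V_{z\zeta^{-1}}\ot W$ for \emph{arbitrary} finite-dimensional irreducible $V$ and $W$ is not a citable standard fact (in particular it is not in \cite{ChPr2}), and you flag it yourself as ``the main obstacle'' without resolving it. The paper resolves it by a bootstrap. It first proves the theorem when $V$ and $W$ are fundamental representations $V(i_1;0)$ and $V(i_2;0)$, where generic irreducibility is available from \cite{GuTa} and \cite{Ta}; the rational solution produced in that case yields a rational intertwiner $W_{v\zeta^{-1}}\ot V_{u\zeta^{-1}}\to V_{u\zeta^{-1}}\ot W_{v\zeta^{-1}}$ between fundamental modules, which is precisely the input needed to run the argument of \cite[Corollary 2.5]{AkKa} in the Yangian setting and deduce generic irreducibility for arbitrary $V$ and $W$ (the paper's Step 2); only then is the Schur argument applied in general (Step 3). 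Without this two-stage structure your uniqueness claim for general $V,W$ is unsupported.

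There is also an ordering problem: you apply Schur's lemma to $A(u,v)=\ms R(u-v)\,R^{VW}(u-v)^{-1}$, which a priori is only a formal power series in $(u-v)^{-1}$ and cannot be evaluated at a complex point $z$. The paper first shows, by exactly your linear-system observation, that the solution space of the intertwining equation is the $\C[[w^{-1}]]$-span of finitely many linearly independent \emph{rational} solutions, and only then evaluates those rational solutions at generic $z$ to invoke Schur's lemma and conclude the span is one-dimensional. So the linear-system/Cramer step must precede, not follow, the Schur step; as written, your uniqueness argument quietly assumes the rationality it is meant to help establish.
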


\begin{remark}\label{RX=XR}
If $\ms{R}(u)\in \End(V\ot V)[[u^{-1}]]$ is a solution of \eqref{PRRP}, then $\ms{R}(u)$ intertwines the action of $\mfg$ on $V\ot W$. To see this, first write $\ms{R}(u)= 1 + \sum_{k=1}^\infty R^{(k)}u^{-k}$. Then, for any fixed $m\geq 1$, multiply \eqref{PRRP} by $(u-v)^{m-1}$ and expand both sides as elements of $(\End(V\ot W)[v])((u^{-1}))$. In particular $\frac{1}{u-v}$ should be expanded as the formal series $\sum_{k=0}^\infty v^k u^{-k-1}$.
By comparing the coefficient of $vu^{-1}$ on both sides of the resulting expression, we obtain $(\varrho^V\ot \varrho^W)(\Delta(X))\,R^{(m)}=R^{(m)}\,(\varrho^V\ot \varrho^W)(\Delta(X))$. 
\end{remark}

Since a proof of Theorem \ref{RmcR} has not appeared in the literature, we provide one here. We note however that a proof of the analogous result for quantum affine algebras has appeared in \cite{Ji} and \cite{FrRe} (see also \cite{EFK}). The proof which we present is in the same spirit as that of \cite{FrRe}: to this end, see Remark \ref{R:FrRe} below.
\begin{proof}
First observe that $R^{VW}(u)$ is in fact a solution of the equation \eqref{PRRP}. Indeed, after replacing $u$ by $v$, we can rewrite \eqref{UR:1} with $Y=J(X)$ as $\mcR(v)\,\tau_{0,v\zeta^{-1}}\Delta^\prime(J(X))=(\tau_{0,v\zeta^{-1}}\Delta(J(X)))\,\mcR(v)$; applying $\varrho^V_{u\zeta^{-1}}\otimes \varrho^W_0$ to both sides of this equality, we obtain, by \eqref{RR=I}, that the left-hand side equals
\[
 (\varrho^V_{u\zeta^{-1}}\otimes\varrho^W_0)(\mcR(v))\,(\varrho^V_{u\zeta^{-1}}\otimes\varrho^W_0)(\tau_{0,v\zeta^{-1}}\Delta^\prime(J(X))) = (\varrho^V\ot \varrho^W)(\mcR(v-u))\,(\varrho^V_{u\zeta^{-1}}\ot\varrho^W_{v\zeta^{-1}})(\Delta^\prime(J(X))).
\]
Similarly, the right-hand side becomes $(\varrho^V_{u\zeta^{-1}}\otimes \varrho^W_{v\zeta^{-1}}) (\Delta(J(X)))(\mcR(v-u))$. 

 To show that the solution space of \eqref{PRRP} is in fact equal to $\C[[u^{-1}]]R^{VW}(u)$, we will proceed in a few steps, beginning with a proof of the theorem in the case where $V$ and $W$ are assumed to be fundamental representations. 

\noindent \textit{Step 1}: The statement of the theorem holds whenever $V$ and $W$ are fundamental representations of the form $V=V(i_1;0)$ and $W=V(i_2;0)$ with $i_1,i_2\in \mcI$. 

Fix $V$ and $W$ of this form and suppose that  $\ms{R}(u)\in \End(V\ot W)[[u^{-1}]]$ is any solution of \eqref{PRRP}.  Since $\mathcal{R}(u)$ has constant term $1$, $R^{VW}(u)^{-1}$ is also an element of $\End(V\ot W)[[u^{-1}]]$. As $\ms{R}(u)$ and $R^{VW}(u)$ are both solutions of \eqref{PRRP}, we have that 
\begin{equation*}
(\varrho^V_{u\zeta^{-1}}\otimes \varrho^W_{v\zeta^{-1}}) (\Delta(J(X)))\,\ms{R}(u-v)\,R^{VW}(u-v)^{-1} =  \ms{R}(u-v)\,R^{VW}(u-v)^{-1}(\varrho^V_{u\zeta^{-1}}\otimes\varrho^W_{v\zeta^{-1}}) (\Delta(J(X))) \qu\text{for all}\qu X \in \mfg.  
\end{equation*}
To be able to conclude that $\ms{R}(u-v)\,R^{VW}(u-v)^{-1}$ is a formal power series, it is thus enough to prove the following claim:  

\noindent \textit{Claim:} any $\mfI(u) \in \End(V \ot W)[[u^{-1}]]$ satisfying the relation 
\begin{equation}
(\varrho^V_{u\zeta^{-1}}\otimes \varrho^W_{v\zeta^{-1}}) (\Delta(J(X)))\,\mfI(u-v)=  \mfI(u-v) (\varrho^V_{u\zeta^{-1}}\otimes \varrho^W_{v\zeta^{-1}}) (\Delta(J(X))) \label{PIIP}
\end{equation}
for all $X \in \mfg$ is of the form $f(u)\cdot \mathrm{id}_{V\ot W}$ for some $f(u)\in \C[[u^{-1}]]$. 
\begin{proof}[Proof of claim] \let\qed\relax
Suppose that $\mfI(u)$ is as in the statement of the claim. 
By the same argument as given in Remark \ref{RX=XR}, the equality \eqref{PIIP} also holds if we replace $J(X)$ by $X$, and hence $\mfI(u-v)$ intertwines the representation $\varrho^V_{u\zeta^{-1}}\otimes \varrho^W_{v\zeta^{-1}}$. Equation \eqref{PIIP} is thus equivalent (after setting $w=u-v$) to 
\begin{equation}
(\varrho^V_{w\zeta^{-1}}\otimes \varrho^W_{0}) (\Delta(J(X)))\,\mfI(w)=  \mfI(w) (\varrho^V_{w\zeta^{-1}}\otimes \varrho^W_{0}) (\Delta(J(X)))\qu\text{for all}\qu X \in \mfg. \label{PIIP2}
\end{equation}
After choosing bases of $V$ and $W$, we can view $\mfI(w)$ as a matrix whose entries are unknown variables and, after restricting $X$ to a basis of $\mfg$, the previous equation becomes equivalent to a finite system of linear equations where the variables are the entries of $\mfI(w)$ and the coefficients are polynomials of degree $\le 1$ in $w$. The space of solutions is thus of the form 
\begin{equation} \label{Sol}
 S=\left\{\mfI(w)=\msum_{\ell=1}^m \mfI_{\ell}(w)\, f_{\ell}(w)\,:\, f_{\ell}(w)\in \C[[w^{-1}]] \; \text{ for all }\; 1\leq \ell\leq m\right\}, 
\end{equation}
where $m\geq 0$ is a fixed integer and $\mfI_1(w),\ldots,\mfI_\ell(w)$ are rational intertwiners which are linearly independent over $\C[[w^{-1}]]$. As any element of $\C[[u^{-1}]]\cdot\mathrm{id}_{V\ot W}$ provides a solution of \eqref{PIIP2}, the integer $m$ must be at least one. If $m$ is exactly one, then 
it will follow that $S=\C[[u^{-1}]]\cdot\mathrm{id}_{V\ot W}$, concluding the proof of the claim. To see that this is indeed the case, it is enough to show that any rational solution 
$\mfI(w)$ of \eqref{PIIP2} belongs to $\C(w)\cdot \mathrm{id}_{V\ot W}$. 
Let $\mfI(w)$ be a rational solution of \eqref{PIIP2}. Then for any $z\in\C$ which is not a pole of $\mfI(w)$, we obtain an intertwiner $\mfI(z): V_{z\zeta^{-1}} \ot W \lra V_{z \zeta^{-1}} \ot W$. 
The tensor product $V_{z\zeta^{-1}} \ot W $ is an irreducible representation of $Y_\zeta(\mfg)$ for all but finitely many values of $z\in \C$: this follows from the results proven in \cite{GuTa} and \cite{Ta}, but was certainly known a long time ago to experts. It follows by Schur's Lemma that $\mfI(z)$ is a scalar multiple of $\mathrm{id}_{V\otimes W}$ for generic values of $z$. This shows that $\mfI(w)$ must be a multiple of the identity by a rational function in $w$, concluding the proof of the claim. 
\end{proof}
As a consequence of the claim we can deduce that the first statement of the theorem holds  when  $V=V(i_1;0)$ and $W=W(i_2;0)$ with $i_1,i_2\in \mcI$. 

To finish the proof of Step 1, it remains to be determined that there is $f(u)\in 1+u^{-1}\C[[u^{-1}]]$ such that $f(u)\,R^{VW}(u)\in \End(V\otimes W)\otimes \C(u)$. The same argument that led us to conclude that the solution space $S$ of \eqref{PIIP2} took the form \eqref{Sol} allows us to conclude that the solution space of \eqref{PRRP} is the $\C[[u^{-1}]]$-linear span of finitely many independent rational solutions $\msR_1(u),\ldots,\msR_k(u)$ (with $k\geq 1$). Conversely, as a consequence of the first part of Step 1  we must have $k=1$ and $\msR_1(u)=h(u)R^{VW}(u)$ for some $h(u)\in \C[[u^{-1}]]$. Letting $z\in\C^{\times}$ and $m\in\Z_{\ge 0}$ be such that $f(u) = zu^mh(u) \in 1 + u^{-1}\C[[u^{-1}]]$, we obtain the desired result.

We now turn towards proving the theorem for arbitrary finite-dimensional irreducible modules $V$ and $W$. 

\noindent \textit{Step 2}: Let $V$ and $W$ be any two finite-dimensional irreducible modules of $Y_\zeta(\mfg)$. Then $V_{\zeta^{-1}z}\otimes W$ is irreducible 
for all but finitely many values of $z\in \C$. 

A proof of this statement for quantum loop algebras was given in Corollary 2.5 of \cite{AkKa}, pending the proof of Conjecture 1 therein (which was obtained in \cite{Kas}). For Yangians, the analogue of \cite[Conjecture 1]{AkKa} is proven in \cite{GuTa} and \cite{Ta}, following earlier work on tensor products of representations of the Yangian of $\mfgl_n$ by A. Molev \cite{Mo1}, M. Nazarov and V. Tarasov \cite{NaTa}. A careful reading of the proof of \cite[Corollary 2.5]{AkKa} reveals that the same argument will apply in the Yangian setting provided the following fact holds: given $V=V(i_1;0)$ and $W=V(i_2;0)$, there exists an intertwiner $\mfI_{WV}(u-v):W_{v\zeta^{-1}}\otimes V_{u\zeta^{-1}}\to V_{u\zeta^{-1}}\otimes W_{v\zeta^{-1}}$ which is rational in $u-v$. By Step 1, there exists a rational solution $\msR(u)$ of \eqref{PRRP} when $V=V(i_1;0)$ and $W=V(i_2;0)$. If 
$\sigma_{WV}$ denotes the permutation operator $W\otimes V\to V\otimes W$, then it follows from \eqref{PRRP} that $\mfI_{VW}(u-v)=\msR(u-v)\circ \sigma_{WV}$ is an intertwiner of the desired form. 

\noindent \textit{Step 3}: The theorem holds in full generality.

The only barrier to carrying out the argument of Step 1 when $V$ and $W$ are arbitrary finite-dimensional irreducible modules is that it requires the assumption that 
$V_{z\zeta^{-1}}\otimes W$ is irreducible  except at finitely many values of $z\in \C$. By Step~2, this assumption is always satisfied, and thus we may in fact apply the argument of Step~1 to conclude the proof of the theorem. \qedhere
\end{proof}

\begin{remark}\label{R:FrRe}
It was mentioned before the proof of Theorem \ref{RmcR} that the analogous result for the quantum affine algebra $U_q(\hat \mfg)$ was proven in \cite[Theorem 4.2]{FrRe}. It was known to the authors of \textit{loc. cit.} that, given finite-dimensional irreducible modules $V$ and $W$, the modules $V_u\otimes W_v$ and $W_v\otimes V_u$ are irreducible when $u$ and $v$ are regarded as formal variables. Let us assume that this is true for $Y_\zeta(\mfg)$ after extending the base ring appropriately. Then, any nonzero solution $\msR(u)$ of \eqref{PRRP} yields a module homomorphism 
 \begin{equation*}
  \msR(u-v)\circ \sigma_{WV}:W_{v\zeta^{-1}}\otimes V_{u\zeta^{-1}}\to V_{u\zeta^{-1}}\otimes W_{v\zeta^{-1}}.
 \end{equation*}
By Schur's Lemma (as stated, for instance, in Lemma 2.1.3 in \cite{ChGi}), this property uniquely determines  $\msR(u)$ up to multiplication by $f(u)\in \C[[u^{-1}]]$, and hence the solution space of \eqref{PRRP} is equal to $\C[[u^{-1}]]\msR(u)$. This provides a proof of the first statement of Theorem \ref{RmcR}. As we were unable to locate a proof of the irreducibility of the modules $V_u\otimes W_v$ and $W_v\otimes V_u$ in the formal setting, we have taken a different approach relying on \cite{AkKa} as well as the more recent papers \cite{GuTa} and \cite{Ta}. It should be possible to prove the irreducibility of those tensor products following arguments analogous to those used in \cite{KaSo} for quantum affine algebras. Another roundabout way to deduce that $V_u\otimes W_v$ and $W_v\otimes V_u$ are irreducible may be to combine the corresponding result of \cite{KaSo} for quantum affine algebras with the meromorphic tensor equivalence constructed in \cite{GTL3}, taking into account that the coproduct considered in \text{loc. cit.} is related to the standard coproduct by a meromorphic twist - see Subsection 2.13 therein.
\end{remark}

We now return to the specialized setting where $\mfg=\mfg_N$ and $\varrho=\rho$ is the natural representation on $\C^N$. 
\begin{proposition}\label{C:RmcR}
When $V=W=\C^N$, the solutions of \eqref{PRRP} are precisely the elements $\ms{R}_f(u)\in \End(\C^N\ot \C^N)[[u^{-1}]]$ which are of the form 
\begin{equation*}
  \ms{R}_f(u)=f(u)\,R(u) \; \text{ with }\; f(u)\in \C[[u^{-1}]]
\end{equation*}
where $R(u)$ is given by \eqref{R(u)}. In particular,  $(\rho\ot \rho)(\mcR(-u))=h(u)\,R(u)$ where $h(u)\in 1+u^{-1}\C[[u^{-1}]]$ is uniquely determined by the property 
\begin{equation}
 h(u)\,h(u+\zeta\ka)=(1-\zeta^2u^{-2})^{-1}. \label{h-hk}
\end{equation}
\end{proposition}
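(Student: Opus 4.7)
The plan is to apply Theorem \ref{RmcR} with $V = W = \C^N$ and $\varrho^V = \varrho^W = \rho$ the representation of $Y_\zeta(\mfg_N)$ furnished by Proposition \ref{P:Y(g)-rep}. That result will immediately identify the solution space of \eqref{PRRP} in $\End(\C^N\ot\C^N)[[u^{-1}]]$ with the rank-one $\C[[u^{-1}]]$-module generated by $(\rho\ot\rho)\mcR(-u)$, so to prove the first statement of the proposition it suffices to show that $R(u)$ from \eqref{R(u)} is itself a solution of \eqref{PRRP}. Once this is done, the claim that $h(u) \in 1 + u^{-1}\C[[u^{-1}]]$ will follow from comparing leading terms: by \eqref{lnR} and \eqref{Omega}, $(\rho\ot\rho)\mcR(-u) = I - \zeta(P-Q)u^{-1} + O(u^{-2})$, which matches $R(u) = I - \zeta(P-Q)u^{-1} + O(u^{-2})$.

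To verify \eqref{PRRP} for $R(u)$, note that because $\rho(J(X)) = 0$, $(\rho\ot\rho)\Omega = P - Q =: \Omega_N$, and $[X\ot 1, \Omega] = -[1\ot X, \Omega]$, the two sides of \eqref{PRRP} after $\rho\ot\rho$ and the $\tau$-twist differ (on the two sides of $R$) by $\zeta[X_1, \Omega_N]$; combined with the $\mfg_N$-invariance of $R(w)$ (inherited from the invariance of $P$ and $Q$), this reduces \eqref{PRRP} to the single identity
\[
w\,[X_1, R(w)] + \tfrac{\zeta}{2}\bigl\{[X_1, \Omega_N], R(w)\bigr\} = 0, \qquad w = u-v,\ X_1 = X\ot 1.
\]
I will establish this identity by direct computation, using $[X_1, P] = (X_1 - X_2)P$ and $[X_1, Q] = \tfrac12[X_1 - X_2, Q]$, the multiplication table $P^2 = I$, $PQ = QP = \epsilon Q$, $Q^2 = NQ$ (with $\epsilon = +1$ in the orthogonal case and $-1$ in the symplectic case), and the crucial numerical identity $N - 2\ka = 2\epsilon$. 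This is the most delicate step: although finite, the calculation requires careful bookkeeping of the interactions between $X_1, X_2, P$, and $Q$, and the cancellation only succeeds because of that specific numerical identity built into the definition of $R(u)$.

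To establish the second assertion and the equation \eqref{h-hk} determining $h(u)$, the plan is to apply $\rho\ot\rho$ to the antipode identity $\mcR(u)\cdot(\id\ot S)\mcR(u) = 1$ (which follows from $(\id\ot S)\mcR(u) = \mcR(u)^{-1}$ in Theorem \ref{uniR}). The essential auxiliary computation is the identity $\rho\circ S = \rho_{-\ka}^{\,t}$ as antihomomorphisms $Y_\zeta(\mfg_N) \to \End(\C^N)$, where $\rho_a = \rho\circ\tau_a$ and $t$ denotes the ordinary matrix transpose; this will be checked on the generators $X$ and $J(X)$ using $\mathsf{c} = 4\ka$ together with $X^t = -X$ for $X \in \mfg_N$. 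Combined with \eqref{RR=I}, it gives
\[
(\rho\ot(\rho\circ S))\mcR(u) = \bigl[(\rho\ot\rho)\mcR(u - \zeta\ka)\bigr]^{t_2} = h(\zeta\ka - u)\, R^{t_2}(\zeta\ka - u).
\]
The crossing relation $R^{t_2}(\zeta\ka - v) = R(v)$ (immediate from $P^{t_2} = Q$ and $Q^{t_2} = P$) together with the unitarity $R(v)R(-v) = (1 - \zeta^2 v^{-2})I$ (another short calculation invoking $N - 2\ka = 2\epsilon$) will then transform the antipode identity into $h(-u)h(\zeta\ka - u)(1 - \zeta^2 u^{-2}) = 1$, which is \eqref{h-hk} after the substitution $u \mapsto -u$. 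That \eqref{h-hk} uniquely determines $h(u)$ within $1 + u^{-1}\C[[u^{-1}]]$ will follow by a standard recursive extraction of its coefficients from this functional equation.
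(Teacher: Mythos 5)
Your proposal is correct, but for the first assertion it takes a genuinely different route from the paper. The paper does \emph{not} invoke Theorem \ref{RmcR} to identify the solution space: it decomposes $\C^N\ot\C^N$ into irreducible $\mfg_N$-modules, uses the resulting projectors to write any solution as $A(u)I+B(u)P+C(u)Q$, and then solves the linear system for $A,B,C$ directly, obtaining $A\zeta=B\cdot(v-u)$ and $C\cdot(u-v-\zeta\ka)=A\zeta$ -- thereby \emph{deriving} $R(u)$ and determining the full solution space in one stroke, self-contained and independent of the machinery behind Theorem \ref{RmcR} (which rests on irreducibility of generic tensor products). You instead cite Theorem \ref{RmcR} to get that the solution space is $\C[[u^{-1}]](\rho\ot\rho)\mcR(-u)$ and then only need to \emph{verify} that the given $R(u)$ is a solution; the paper explicitly acknowledges this shortcut ("the first statement of the proposition then becomes an immediate consequence") before opting for the explicit computation. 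Your reduction of \eqref{PRRP} to the single identity $w[X_1,R(w)]+\tfrac{\zeta}{2}\{[X_1,\Omega_N],R(w)\}=0$ is correct, and the ingredients you list ($[X_1,P]=(X_1-X_2)P$, $[X_1,Q]=\tfrac12[X_1-X_2,Q]$, the $P,Q$ multiplication table, and $N-2\ka=2\epsilon$) are exactly what makes the verification close; the computation you defer is, in effect, the "sufficiency" half of the computation the paper carries out in full. For the second assertion your argument is essentially the paper's: the paper likewise derives \eqref{h-hk} from $(\id\ot S)\mcR(u)=\mcR(u)^{-1}$, Lemma \ref{L:dual}, the crossing relation, and $R(u)R(-u)=(1-\zeta^2u^{-2})I$, merely deferring the identity $\mcR_\rho(-u-\zeta\ka)^t=\mcR_\rho(-u)^{-1}$ to the proof of Theorem \ref{T:YR-iso}, whereas you run it inline with $\rho\ot\rho$.

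One small correction: the transpose $t$ in your identity $\rho\circ S=\rho_{-\ka}^{\,t}$ must be the transposition of the paper, $(E_{ij})^t=\theta_{ij}E_{-j,-i}$ (the transpose with respect to the bilinear form defining $\mfg_N$), not the ordinary matrix transpose. In the split realization used here the ordinary transpose satisfies neither $X^t=-X$ for $X\in\mfg_N$ nor $P^{t_2}=Q$, and both of these are essential to your computation; with the form-transpose everything you wrote goes through, in agreement with Lemma \ref{L:dual}.
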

Note that the second statement of the above proposition was observed in Example 2 of \cite{Dr1} in the case when $\mfg_N=\mfso_N$. 
\begin{proof}
One can verify  that the matrix $R(u)$ from \eqref{R(u)} is a solution of \eqref{PRRP} when $V=W=\C^N$. The first statement of the proposition then becomes an immediate
 consequence of Proposition \ref{RmcR}. However, in this specialized case it is also not too difficult to determine the solution space \eqref{PRRP} directly and this approach has the benefit of allowing one to recover $R(u)$ as a solution of \eqref{PRRP} explicitly. With this in mind, we proceed along this alternate route. 

\noindent \textit{Step 1: } A direct proof that the solution space of \eqref{PRRP} is equal to $\C[[u^{-1}]]R(u)$. 

Suppose first that $N>2$. Let us begin by decomposing $\C^N\ot \C^N$ as a direct sum of irreducible $\mfg_N$-modules and determining the corresponding projection maps. Let $v_Q=\sum_k \theta_{1k}\,e_k\ot e_{-k}$.
 
Consider first the case where $\mfg_N=\mfsp_N$. We have the decomposition $\C^N\ot \C^N=\Lambda^2(\C^N)\oplus \mr{Sym}^2(\C^N)$, where $\mr{Sym}^2(\C^N)$ 
 is an irreducible $\mfsp_N$-module, but $\Lambda^2(\C^N)$ is not. Indeed, $V_{\rm tr}=\C v_Q\subset \Lambda^2(\C^N)$ is isomorphic to the trivial representation of $\mfsp_N$, and $\frac{1}{N}Q$ is the projection operator of $\C^N\ot \C^N$ onto $V_{\rm tr}$. Let $V_{\Lambda}=\mr{Ker}(Q)\cap \Lambda^2(\C^N)$. Then  $V_\Lambda$
 is an irreducible $\mfsp_N$-module with the highest weight vector $\frac{1}{2}(e_{-n}\ot e_{-n+1}-e_{-n+1}\ot e_{-n})=e_{-n}\wedge e_{-n+1}$, and we have the decomposition 
 $\C^N\ot \C^N=V_{\rm tr}\oplus V_{\Lambda}\oplus  \mr{Sym}^2(\C^N)$. The corresponding projection maps are $\pi_1^-=\tfrac{1}{N}Q$, $\pi_2^-=\tfrac{1}{2}(I-P)-\tfrac{1}{N}Q$, and $\pi_3^-=\tfrac{1}{2}(I+P)$, respectively.
 
Suppose now that $\mfg_N=\mfso_N$. Then, we have $\C^N\ot \C^N=\Lambda^2(\C^N)\oplus \mr{Sym}^2(\C^N)$, where $\Lambda^2(\C^N)$ is irreducible, but $\mr{Sym}^2(\C^N)$ decomposes as the direct sum $V_{\mr{Sym}}\oplus V_{\rm tr}$. Here $V_{\rm tr}=\C v_Q$ is isomorphic to the trivial representation, and the projection of $\C^N\ot \C^N$ onto $V_{\rm tr}$ is again given by $\pi_1^+=\frac{1}{N}Q$. The $\mfso_N$-module $V_{\mr{Sym}}$ is the submodule of $\mr{Sym}^2(\C^N)$ defined by $V_{\mr{Sym}}=\mr{Sym}^2(\C^N)\cap \mr{Ker}(Q)$, and the corresponding projection map is $\pi_2^+=\tfrac{1}{2}(I+P)-\frac{1}{N}Q$. Finally, the projection of $\C^N\ot \C^N$ onto $\Lambda^2(\C^N)$ is given by $\pi_3^+=\tfrac{1}{2}(I-P)$. 
  
Next, assume that $\ms{R}(u)\in \End(\C^N\ot \C^N)[[u^{-1}]]$ is any solution of \eqref{PRRP}. Then, by Remark \ref{RX=XR}, $\ms{R}(u)$ intertwines the action of $\mfg_N$, and thus we can express it as a $\C[[u^{-1}]]$-linear combination of 
 the projection operators $\pi_1^{(\pm)},\pi_2^{(\pm)}$ and $\pi_3^{(\pm)}$. As a consequence of the definition of these projection operators, this means that there exists $A(u),B(u)$ and $C(u)$ in $\C[[u^{-1}]]$ such that 
\begin{equation}
  \ms{R}(u)=A(u) I + B(u) P+C(u) Q\in \mr{End}(\C^N\ot \C^N)[[u^{-1}]]. \label{R:A,B,D}
\end{equation}
Since the Casimir element $\Omega$ operates as $P-Q$ on $\C^N\ot \C^N$ (see \eqref{Omega}), we can rewrite \eqref{PRRP} explicitly as
\begin{align}
\big(uX\ot 1 +1\ot& vX+\tfrac{1}{2}\zeta[X\ot 1,P-Q]\big)\,\big(A(u-v) I + B(u-v) P+C(u-v) Q\big) \nonumber\\
  = {} &\big(A(u-v) I + B(u-v) P+C(u-v) Q\big)\,\big(uX\ot 1 +1\ot vX-\tfrac{1}{2}\zeta[X\ot 1,P-Q]\big) \label{R-solve}
 \end{align}
for all $X\in \mfg_N$ (here we identify $X$ with $\rho(X)$). We now proceed to solve \eqref{R-solve} for $A(u-v)$, $B(u-v)$ and $C(u-v)$. Since 
 $P^2=I, Q^2=NQ$ and $PQ=QP=(\pm) Q$, we have the following set of equalities:
\begin{align*}
 [X\ot 1, P-Q]P = {} & (X\ot 1 - 1\ot X) + (\mp)(X\ot 1)Q + (\pm)Q(1\ot X),\\
 [X\ot 1,P-Q]Q  = {} & (\pm)(X\ot 1-1\ot X)Q-N(X\ot 1)Q+Q(X\ot 1)Q,\\
 P[X\ot 1, P-Q] = {} & (1\ot X-X\ot 1) + (\mp)(1\ot X)Q + (\pm) Q(X\ot 1),\\
 Q[X\ot 1, P-Q] = {} & (\pm)Q(1\ot X-X\ot 1)-Q(X\ot 1)Q+NQ(X\ot 1).
\end{align*}
Therefore, after expanding \eqref{R-solve} and subtracting the expressions $A(u-v)\big(uX\ot 1 +1\ot vX-\tfrac{1}{2}\zeta[X\ot 1,P-Q]\big)$, $ B(u-v)\tfrac{\zeta}{2}(X\ot 1 - 1\ot X)$ and $C(u-v)\frac{\zeta}{2}Q(X\ot 1)Q$ from both sides, we obtain the equivalent relation 
 \begin{align}\label{eq0}
 \begin{split}
 A(u-v) & \,\zeta\, P\,\big(1\ot X-X\ot 1\big)+A(u-v)\,\zeta\,\big(Q(X\ot 1)-(X\ot 1)Q\big)\\
 &+  B(u-v)\big( uX\ot 1 + 1\ot vX\big) P \, + (\mp) B(u-v)\tfrac{1}{2}\zeta\,\big((X\ot 1)Q- Q(1\ot X) \big)\\
 &+ C(u-v)\big(uX\ot 1 + 1\ot vX) Q + C(u-v)\tfrac{1}{2}\zeta \big((\pm) (X\ot 1-1\ot X)Q-N(X\ot 1)Q \big)
 \\
 = {} &B(u-v)P\big(uX\ot 1 + 1\ot vX)\, + (\pm)B(u-v)\tfrac{1}{2}\zeta\big((1\ot X)Q-Q(X\ot 1) \big)\\
 &+C(u-v) Q \big(uX\ot 1 +1\ot vX \big)-C(u-v)\tfrac{1}{2}\zeta\big((\pm) Q(1\ot X-X\ot 1)+NQ(X\ot 1) \big).
 \end{split}
\end{align}
Since $\Delta(X)\,Q=0=Q\,\Delta(X)$ for all $X\in \mfg_N$, we have the the equalities 
\begin{equation*}
(1\ot X)\,Q=-(X\ot 1)\,Q \; \text{ and }\; Q\,(1\ot X)=-Q\,(X\ot 1) \; \text{ for all }\; X\in \mfg_N.
\end{equation*}
Using these relations repeatedly, together with the identity $PT=PT P^2=\sigma(T)P$ for all $T\in \End(\C^N\ot \C^N)$, we find that \eqref{eq0} can be rewritten in the 
simple form
\begin{align}\label{eq0-new}
\begin{split}
 \big(A(u-v)\zeta+B(u-v)(u-v)\big)&\,\big(X\ot 1- 1\ot X\big)\,P\\
                          = {} & \big( C(u-v)(u-v-\zeta \ka)-A(u-v)\zeta\big)\,\big(Q(X\ot 1)-(X\ot 1)Q\big).
\end{split}                          
\end{align}
Multiplying on the left by the projector $\pi_3^{(\pm)}=\tfrac{1}{2}(I(\mp) P)$, and on the right by the projector $\pi_2^{(\pm)}=\frac{1}{2}(I(\pm) P)-\tfrac{1}{N}Q$, we obtain the relation 
\begin{equation*}
\big(A(u-v)\zeta+B(u-v)(u-v)\big)\,\tfrac12\,\big(I(\mp) P\big)\,\big(X\ot 1- 1\ot X\big)\,\big(\tfrac12(I(\pm) P)-\tfrac1N Q \big)=0.
\end{equation*}
If we apply both sides to, say, $e_{n-1} \ot e_n \,(\pm)\, e_n \ot e_{n-1}$, and take $X= F_{nn}$, we obtain the equality 
\begin{equation*}
  \big(A(u-v)\zeta+B(u-v)(u-v)\big)\,\big(e_{n-1}\ot e_n(\mp) e_n\ot e_{n-1}\big)=0,
\end{equation*}
which implies that $A(u-v)\zeta=B(u-v)\,(v-u)$. Reinserting this into \eqref{eq0-new} and left multiplying by the projector $\tfrac{1}{2}(I(\mp) P)$, we arrive at the equation
\begin{equation*}
 (C(u-v)(u-v-\zeta\ka)-A(u-v)\zeta)\tfrac{1}{2}(I(\mp) P)(X\ot 1)Q=0.
\end{equation*}
Applying both sides to the vector $\tfrac{1}{N}v_Q$ and taking $X=F_{11}$ yields 
\begin{equation*}
 (C(u-v)(u-v-\zeta\ka)-A(u-v)\zeta)(e_1\ot e_{-1}(\mp) e_{-1}\ot e_1)=0,
\end{equation*}
which implies that $C(u-v)(u-v-\zeta\ka)=A(u-v)\zeta$. Hence, we have shown that if $(A(u),B(u),C(u))$ satisfies \eqref{eq0}, then the relations
\begin{equation*}
 A(u-v)\zeta=B(u-v)(v-u)\; \text{ and } \; C(u-v)(u-v-\zeta\ka)=A(u-v)\zeta
\end{equation*}
must hold. Conversely, it follows immediately from \eqref{eq0-new} that if $A(u),B(u)$ and $C(u)$ satisfy the above relations, then they satisfy \eqref{R-solve}. Therefore, we 
have shown that the solutions of \eqref{PRRP} are exactly the elements $\ms{R}(u)\in \End(\C^N\ot \C^N)[[u^{-1}]]$ which are of the form 
\begin{equation*}
 \ms{R}(u)=A(u)\left(I-\frac{\zeta}{u}\,P + \frac{\zeta}{u-\zeta \ka}\,Q \right),\; \text{ with }\; A(u)\in \C[[u^{-1}]].  
 \end{equation*}
If instead $N=2$ ({\it i.e.}~$\mfg_N=\mfsp_2$), we have $\Lambda^2(\C^2)=\C v_Q$, and $P+Q=I$, so we may assume that, for instance, $C(u)=0$ in \eqref{R:A,B,D}. 
Since $Q(X\ot 1)-(X\ot 1)Q=(X\ot 1-1\ot X)P$, we can rewrite \eqref{eq0-new} as 
\begin{equation}
\big(2A(u-v)\zeta+B(u-v)(u-v)\big)\,\big(X\ot 1-1\ot X\big)\,P=0. \label{eq0-new'}
\end{equation}
Applying this to, say, $e_1\ot e_1$, and taking $X=F_{-1,1}$, we obtain $2A(u-v)\zeta=B(u-v)(v-u)$, and any pair $(A(u),B(u))$ satisfying 
this relation will solve \eqref{eq0-new'}. Hence, we obtain that $\ms{R}(u)$ is a solution of \eqref{PRRP} if and only if it is of the form 
\begin{equation*}
 \ms{R}(u)=A(u)\left(I-\frac{2\zeta}{u}\,P\right)=A(u)\frac{u-2\zeta}{u-\zeta}\left(I-\frac{\zeta}{u}\,P+\frac{\zeta}{u-\zeta \ka}\,Q  \right),\; \text{ with }\; A(u)\in \C[[u^{-1}]].  \qedhere
\end{equation*} 
We thus recover (4.21) from \cite{GRW1}.
Now let us turn to proving the second statement of the proposition. 

\noindent \textit{Step 2: } $(\rho\otimes \rho)(\mcR(-u))=h(u)R(u)$ where $h(u)\in 1+u^{-1}\C[[u^{-1}]]$ is uniquely determined by \eqref{h-hk}. 

Set $\mcR_\rho(u)=(\rho\ot \rho)(\mcR(-u))$.  Since $\mcR_\rho(u)$  satisfies \eqref{PRRP}, there is $h(u)\in \C[[u^{-1}]]$ such that $\mcR_\rho(u)=h(u)\,R(u)$. As $R(u)$ and $\mcR_\rho(u)$ belong to  $1+u^{-1}\End(\C^N\otimes \C^N)[[u^{-1}]]$, $h(u)$ belongs to $1+u^{-1}\C[[u^{-1}]]$. It will be proven in the proof of Theorem \ref{T:YR-iso} that 
$\mcR_\rho(-u-\zeta\ka)^t=\mcR_\rho(-u)^{-1}$. Conversely, $R(u)$ satisfies $R(u)R(u+\zeta\ka)^t=R(u)R(-u)=1-\zeta^2 u^{-2}$ (see (2.23) in \cite{AMR}). Therefore, from the equality $1=\mcR_\rho(-u)\mcR_\rho(-u-\zeta\ka)^t=h(u)h(u+\zeta\ka)R(u)R(u+\zeta\ka)^t$ we obtain \eqref{h-hk}. As there is a unique series in $1+u^{-1}\C[[u^{-1}]]$ satisfying \eqref{h-hk}, this relation uniquely determines $h(u)$. 
\end{proof}

\begin{remark} 
The appearance of the scalar $\ka$ in $R(u)$ can be explained by the fact that the adjoint representation $\ad(\mfg_N)$ appears as a summand of $\C^N\ot \C^N$ (note that this is not the case for $\mfsl_N$, where we have $R(u) = I - {\zeta}u^{-1}P$ instead) and that the Casimir element operates as $4\ka$ in this representation (see the line below \eqref{Omega}).
\end{remark}
 

\subsection{Equivalence between the \texorpdfstring{$J$}{J} and $RTT$ presentations of the Yangian}
\hspace{0mm} In this subsection, we prove Theorem 6 of \cite{Dr1} when $\mfg=\mfg_N$, which provides an isomorphism of Hopf algebras between 
$Y_\zeta^R(\mfg_N)$ and $Y_\zeta(\mfg_N)$ (see Theorem \ref{T:YR-iso}). Our proof will employ the following lemma:

\begin{lemma} \label{L:dual}
For any $X \in Y_\zeta(\mfg_N)$ we have that
\eq{
\rho_s ( S^{\pm1}(X) ) = (\rho_{s\mp\ka}(X))^t  . \label{dual}
}
\end{lemma}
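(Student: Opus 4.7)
The plan is to check the identity on the generators $X$ and $J(X)$ of $Y_\zeta(\mfg_N)$, and then extend to the full algebra by a functoriality argument exploiting that both sides are algebra anti-homomorphisms. The key ingredient on the generators is that every element of $\mfg_N$ is antisymmetric with respect to the transposition $t$: since $F_{ij}=E_{ij}-(E_{ij})^t$ we have $F_{ij}^t=-F_{ij}$, so $\rho(X)^t=-\rho(X)$ for all $X\in\mfg_N$. Combined with the observation from \eqref{Omega} that the quadratic Casimir $\Omega$ acts on the adjoint representation by the eigenvalue $\mathsf{c}=4\ka$, this is essentially all we need.

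First, I would treat the upper sign. Using $\rho_s(X)=\rho(X)$ and $\rho_s(J(X))=s\zeta\,\rho(X)$ together with the antipode formulas
\begin{equation*}
S(X)=-X,\qquad S(J(X))=-J(X)+\tfrac14\hbar\mathsf{c}\,X=-J(X)+\zeta\ka X,
\end{equation*}
I compute $\rho_s(S(X))=-\rho(X)=\rho(X)^t=(\rho_{s-\ka}(X))^t$ and $\rho_s(S(J(X)))=(\ka-s)\zeta\,\rho(X)=(s-\ka)\zeta\,(-\rho(X))=((s-\ka)\zeta\,\rho(X))^t=(\rho_{s-\ka}(J(X)))^t$. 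Thus both sides of \eqref{dual} agree on all generators of $Y_\zeta(\mfg_N)$.

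To extend this agreement to arbitrary elements, observe that the map $X\mapsto\rho_s(S(X))$ is an algebra anti-homomorphism $Y_\zeta(\mfg_N)\to\End(\C^N)$, being the composition of the anti-homomorphism $S$ with the algebra homomorphism $\rho_s$. Likewise, $X\mapsto(\rho_{s-\ka}(X))^t$ is an algebra anti-homomorphism, being the composition of the homomorphism $\rho_{s-\ka}$ with matrix transposition. Two anti-homomorphisms out of $Y_\zeta(\mfg_N)$ that coincide on the generating set $\{X, J(X):X\in\mfg_N\}$ must coincide everywhere, establishing \eqref{dual} with the upper sign.

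Finally, the lower sign case follows formally from the upper sign case. Substituting $S^{-1}(Y)$ for $X$ in the identity $\rho_s(S(X))=(\rho_{s-\ka}(X))^t$ gives $\rho_s(Y)=(\rho_{s-\ka}(S^{-1}(Y)))^t$. Transposing both sides (and using $((-)^t)^2=\id$) and relabelling $s'=s-\ka$ yields $\rho_{s'}(S^{-1}(Y))=(\rho_{s'+\ka}(Y))^t$, which is precisely \eqref{dual} with the lower sign. The main place where one has to be a little careful is the Casimir computation: one must remember that $\mathsf{c}$ denotes the eigenvalue of $\Omega$ in the adjoint representation (not, say, in the defining one), since otherwise the cancellation $\tfrac14\hbar\mathsf{c}=\zeta\ka$ used above would fail.
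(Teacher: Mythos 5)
Your proof is correct and follows essentially the same route as the paper: reduce to the generators $F_{ij}$ and $J(F_{ij})$, use $F_{ij}^t=-F_{ij}$ together with $S(J(X))=-J(X)+\zeta\ka X$ (via $\mathsf{c}=4\ka$) and $\rho(J(X))=0$. You merely spell out the anti-homomorphism extension argument and deduce the $S^{-1}$ case formally from the $S$ case, where the paper verifies both signs directly on generators.
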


\begin{proof}
It is enough to demonstrate \eqref{dual} for all $F_{ij}$ and all $J(F_{ij})$. This is immediate for $F_{ij}$, since $S(F_{ij})= - F_{ij}$ and $F_{ij}^t = - F_{ij}$. For $J(F_{ij})$, we have $\rho_s(S^{\pm1}(J(F_{ij}))) = \rho_s(-J(F_{ij}) \pm\zeta \ka F_{ij}) = - \zeta\, (s \mp\ka) \, F_{ij}$, which coincides with $(\rho_{s\mp\ka}(J(F_{ij})))^t$. 
\end{proof}

\begin{theorem}[\cite{Dr1}, Theorem 6] \label{T:YR-iso}
The map 
\eq{
\varphi_{R,J} \;:\; Y^R_\zeta(\mfg_N) \to Y_\zeta(\mfg_N) , \qu T(u) \mapsto (\rho\ot id)(\mcR(-u)) \notag
}
is an isomorphism of Hopf algebras. 
\end{theorem}
The original formulation of this result, which appeared in \cite{Dr1} without proof, states that there exists a surjective Hopf algebra homomorphism $X_\zeta^R(\mfg_N)\onto Y_\zeta(\mfg_N)$ whose kernel is generated by the coefficients of a distinguished central series $z(u)=1+\sum_{r=1} z_r u^{-r}$ satisfying $\wt T(u)\,\wt T(u+\zeta\ka)^t=z(u)\cdot I$. Here $X_\zeta^R(\mfg_N)$ is the \textit{extended Yangian} of $\mfg_N$ whose definition can be obtained from that of $Y_\zeta^R(\mfg_N)$ (Definition \ref{D:Y(g)-RTT}) by omitting the relation \eqref{unitary}, and $\wt T(u)$ denotes its generating matrix: see \cite{AMR}.

\begin{proof}
We know that the universal $R$-matrix $\mcR(u)$ defined by Theorem \ref{uniR} satisfies the universal quantum Yang-Baxter equation \eqref{UYBE}. Replacing $u$ and $v$ by $-u$ and $-v$, and then applying $\rho \ot \rho \ot id$ to it gives, combined with Proposition~\ref{C:RmcR},
\[ 
R_{12}(u-v)\, \varphi_{R,J}(T(u))_{13} \, \varphi_{R,J}(T(v))_{23} = \varphi_{R,J}(T(v))_{23} \, \varphi_{R,J}(T(u))_{13}\, R_{12}(u-v) , 
\] 
which shows that $\varphi_{R,J}$ respects the defining relation \eqref{RTT}. 
We also know that $(S^{-1} \ot id)(\mcR(-u)) = \mcR(-u)^{-1}$. (This is a consequence of $( id \ot S)(\mcR(u)) = \mcR(u)^{-1}$ and \eqref{RR=I}.) Left multiplying both
sides of this equality by $\mcR(-u)$ and applying $\rho \ot id$ gives, combined with Lemma \ref{L:dual}, $\varphi_{R,J}(T(u))\,\varphi_{R,J}(T(u+\zeta \ka))^t  = I$. This shows that $\varphi_{R,J}$ respects \eqref{TT=TT=I}. 

Next, we demonstrate the surjectivity of $\varphi_{R,J}$. Using \eqref{Omega} we rewrite \eqref{lnR} for $\mfg=\mfg_N$ as
\eqa{
\mcR(u) = 1 &+ \tfrac12 \zeta u^{-1} \msum_{i,j} F_{ij} \ot F_{ji} + \tfrac12 \zeta u^{-2}  \msum_{i,j} (J(F_{ij}) \ot F_{ji} - F_{ij} \ot J(F_{ji})) + \tfrac18 \zeta^2 u^{-2} \, \Big(\msum_{i,j} F_{ij} \ot F_{ji} \Big)^2 + O(u^{-3}) . \label{R(u)-series}
}
Notice that
\eqn{
(\rho\ot\id)\,\Big( \msum_{i,j} F_{ij} \ot F_{ji} \Big) = {} & \msum_{i,j} (E_{ij} - \theta_{ij}E_{-j,-i}) \ot F_{ji} = 2\msum_{i,j} E_{ij} \ot F_{ji} \\
\intertext{and}
(\rho \ot \id)\,\Big(\msum_{i,j} F_{ij} \ot F_{ji} \Big)^2 = {} & \Big( 2\msum_{i,j} E_{ij} \ot F_{ji} \Big)^2 = 4 \msum_{i,j,k} E_{ij} \ot F_{ki} F_{jk}.
}
Thus, upon substituting $u\mapsto -u$ in \eqref{R(u)-series} and then applying $(\rho \ot \id)$ we find that $\varphi_{R,J}(t_{ij}^{(1)}) = -F_{ji}$ and $\varphi_{R,J}(t_{ij}^{(2)}) = -  \zeta^{-1}J(F_{ji}) + \tfrac12 \sum_{k} F_{ki} F_{jk}$, which follow from the definition of $T(u)$ in Proposition~\ref{P:RTT} and Proposition \ref{P:Y(g)-rep}. Since $F_{ij}$ and $J(F_{ij})$ generate the whole Yangian $Y_\zeta(\mfg_N)$, the map $\varphi_{R,J}$ is surjective.

It remains to show that it is injective.  We have filtrations on both $Y_\zeta(\mfg_N)$ and $Y^R_\zeta(\mfg_N)$ given by $\deg(F_{ij})=0$, $\deg(J(F_{ij}))=1$ and $\deg(t^{(r)}_{ij})=r-1$, respectively, with respect to which $\varphi_{R,J}$ becomes a filtered homomorphism (because $t_{ij}^{(1)}$ and $t_{ij}^{(2)}$ for all $i,j$ generate $Y^R_\zeta(\mfg_N)$). We can thus consider the associated graded homomorphism $\gr \varphi_{R,J}$ which is given by:
\eq{
\gr \varphi_{R,J} \;:\; \gr Y^R_\zeta(\mfg_N) \to \gr Y_\zeta(\mfg_N), \qu 
\bar t^{\,(1)}_{ij} \mapsto -\overline{F}_{ji} , \qu
\bar t^{\,(2)}_{ij} \mapsto  -\zeta^{-1}\overline{J(F_{ji})} . 
}
By \eqref{iso:grYR-Ug} and Proposition \ref{grYzeta}, we already know that the graded algebras $\gr Y^R_\zeta(\mfg_N)$ and $\gr Y_\zeta(\mfg_N)$ are both  isomorphic to $\mfU\mfg_N[s]$. 
After identifying both associated graded algebras with $\mfU \mfg[s]$, $\gr \varphi_{R,J}$ becomes the automorphism $F_{ij} \ot s^r \mapsto -F_{ji} \ot s^r$ of $\mfU\mfg[s]$. Thus, $\varphi_{R,J}$ is also an isomorphism of algebras.

To complete the proof that $\varphi_{R,J}$ is an isomorphism of Hopf algebras, we need to show that it is a coalgebra homomorphism commuting with the antipodes of $Y_\zeta(\mfg_N)$ and $Y_\zeta^R(\mfg_N)$. First note that
\eqn{
(\varphi_{R,J} \ot \varphi_{R,J})(\Delta(T(u)) = {} & \msum_{i,j,k} E_{ij} \ot \varphi_{R,J}(t_{ik}(u)) \ot \varphi_{R,J}(t_{kj}(u)) \\ = {} & (\rho\ot id\ot id)\big(\mcR_{12}(-u)\,\mcR_{13}(-u) \big) =(id\ot\Delta)((\rho\ot id)\,\mcR(-u))=\Delta(\varphi_{R,J}(T(u))),  }
and hence $\varphi_{R,J} \ot \varphi_{R,J} \circ \Delta=\Delta\circ \varphi_{R,J}$. In addition, it follows from a standard argument that $(id\ot\eps)\,\mcR(u)=1$ (see \cite[Theorem VIII.2.4]{Ka}), and hence that 
$\eps(\varphi_{R,J}(T(u)))=I$. Since $\eps(T(u))=I$, we can conclude that $\varphi_{R,J}$ is indeed a homomorphism of bialgebras. Finally, since 
\eqn{
\varphi_{R,J} (S(T(u))) = \varphi_{R,J}(T^{-1}(u)) = (\rho\ot id)(\mcR^{-1}(-u)) = {} & (\rho\ot id)(id\ot S)(\mcR(-u))=S(\varphi_{R,J}(T(u))),  
}
it is in fact a Hopf algebra isomorphism. (This also follows from the fact that a bialgebra isomorphism is automatically a Hopf algebra isomorphism.)
\end{proof}

We will end this subsection by showing that there exists an automorphism $\varkappa$ of $Y_\zeta(\mfg_N)$ such that the composition of $\varphi_{R,J}:Y_\zeta^R(\mfg_N)\to Y_\zeta(\mfg_N)$
with $\varkappa$ yields an isomorphism $\Phi_{R,J}$ which equals the identity map when restricted to $\mfU\mfg_N$. 
In fact, we may take $\varkappa$ to be the extension of the Chevalley involution of $\mathfrak{Ug}_N$ (which is also denoted $\varkappa$ -- see \eqref{varkappa0})  to $Y_\zeta(\mfg_N)$ furnished by Lemma \ref{L:ext-aut}:
\begin{proposition}\label{P:vark}
 Let $\varkappa$ denote the extension of the Chevalley involution to $Y_\zeta(\mfg_N)$ provided by Lemma \ref{L:ext-aut}. Then the isomorphism 
 \begin{equation*}
  \Phi_{R,J}=\varkappa\circ \varphi_{R,J}:Y_\zeta^R(\mfg_N)\to Y_\zeta(\mfg)
 \end{equation*}
is the identity map when restricted to $\mfU\mfg_N$. 
\end{proposition}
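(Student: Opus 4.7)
The plan is to reduce the statement to a purely Lie-algebraic identity about the Chevalley involution $\varkappa$ of $\mfg_N$, and then verify that identity by matching $\varkappa$ with the map $X\mapsto -X^t$ on the defining matrix realization of $\mfg_N$.

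First, recall from the surjectivity part of the proof of Theorem \ref{T:YR-iso} that $\varphi_{R,J}(t_{ij}^{(1)})=-F_{ji}$ for all indices $-n\le i,j\le n$, and that the embedding $\mathfrak{Ug}_N\hookrightarrow Y^R_\zeta(\mfg_N)$ obtained from \eqref{iso:grYR-Ug} sends $F_{ij}$ to $t_{ij}^{(1)}$. Thus proving the proposition amounts to showing
\begin{equation*}
\varkappa(F_{ij})=-F_{ji}\qquad\text{for all }\;-n\le i,j\le n,
\end{equation*}
where $\varkappa$ on the left-hand side denotes the Chevalley involution of $\mathfrak{Ug}_N$ determined by \eqref{varkappa0}.

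The key step is to verify that the map $\sigma:\mfgl_N\to\mfgl_N$ defined by $\sigma(X)=-X^t$, where $(\cdot)^t$ is the transposition $(E_{ij})^t=\theta_{ij}E_{-j,-i}$ introduced in Subsection 3.1, is a Lie algebra automorphism of $\mfgl_N$ which preserves the subalgebra $\mfg_N$ and which, restricted to $\mfg_N$, coincides with $\varkappa$. On the defining generators we have $F_{ij}^t=F_{ji}$ (since $\theta_{ij}=\theta_{ji}$), so $\sigma$ sends $F_{ij}$ to $-F_{ji}$; in particular it preserves the relation $F_{ij}+\theta_{ij}F_{-j,-i}=0$ and hence stabilizes $\mfg_N$. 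A direct check on the Chevalley generators listed in \eqref{x->I-1}--\eqref{x->I-4} shows that $\sigma$ sends $h_i\mapsto -h_i$ and $x_i^{\pm}\mapsto -x_i^{\mp}$ for every $i\in\mcI$: for instance, for $\mfg_N=\mfsp_{2n}$, $\sigma(x_0^+)=\sigma(F_{-1,1}/\sqrt{2})=-F_{1,-1}/\sqrt{2}=-x_0^-$, and analogous checks work in the remaining cases. Since the Chevalley involution is uniquely determined by its action on these generators, we conclude $\varkappa|_{\mfg_N}=\sigma|_{\mfg_N}$, and thus $\varkappa(F_{ij})=-F_{ji}$ as required.

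Combining the two steps, for any $F_{ij}\in\mfU\mfg_N\subset Y^R_\zeta(\mfg_N)$ we obtain
\begin{equation*}
\Phi_{R,J}(F_{ij})=\varkappa(\varphi_{R,J}(t_{ij}^{(1)}))=\varkappa(-F_{ji})=-(-F_{ij})=F_{ij},
\end{equation*}
so $\Phi_{R,J}$ is the identity on $\mfU\mfg_N$. The main (and essentially only) obstacle is the matrix identification $\varkappa(F_{ij})=-F_{ji}$; once this is established the proposition follows immediately from the explicit formula for $\varphi_{R,J}(t_{ij}^{(1)})$ already recorded in the proof of Theorem \ref{T:YR-iso}.
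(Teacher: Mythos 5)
Your overall strategy is the same as the paper's: reduce the proposition to the identity $\varkappa(F_{ij})=-F_{ji}$, verify that identity by comparing with the Chevalley generators \eqref{x->I-1}--\eqref{x->I-4}, and then combine with $\varphi_{R,J}(t_{ij}^{(1)})=-F_{ji}$. Your idea of exhibiting $\varkappa$ as $X\mapsto -X^{\mathrm{tr}}$ for a suitable transposition is a reasonable way to make the paper's one-line deduction explicit, since it shows at once that $F_{ij}\mapsto -F_{ji}$ is an automorphism, so that agreement with $\varkappa$ on the generators $h_i,x_i^{\pm}$ forces equality on all of $\mfg_N$.

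However, there is a concrete error in the key step as written. You take $(\cdot)^t$ to be the transposition of Subsection 3.1, defined by $(E_{ij})^t=\theta_{ij}E_{-j,-i}$, and claim $F_{ij}^t=F_{ji}$. This is false for that map: from $F_{ij}=E_{ij}-\theta_{ij}E_{-j,-i}$ and $\theta_{ij}\theta_{-j,-i}=1$ one gets $F_{ij}^t=\theta_{ij}E_{-j,-i}-E_{ij}=-F_{ij}$ (this is exactly the identity $F_{ij}^t=-F_{ij}$ invoked in the proof of Lemma \ref{L:dual}). Consequently $\sigma(X)=-X^t$ with \emph{that} $t$ is the identity on $\mfg_N$, not the Chevalley involution, and your argument collapses. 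The parenthetical ``since $\theta_{ij}=\theta_{ji}$'' shows what you actually computed: you applied the \emph{ordinary} matrix transpose $E_{ij}\mapsto E_{ji}$, which gives $F_{ij}\mapsto E_{ji}-\theta_{ji}E_{-i,-j}=F_{ji}$. The fix is simply to use the ordinary transpose throughout: $X\mapsto -X^{\mathrm{ordinary\ transpose}}$ is a Lie algebra automorphism of $\mfgl_N$, sends $F_{ij}$ to $-F_{ji}$ (hence preserves $\mfg_N$), and agrees with \eqref{varkappa0} on the Chevalley generators, so it equals $\varkappa$ on $\mfg_N$. With that correction the rest of your proof, including the final computation $\Phi_{R,J}(F_{ij})=\varkappa(-F_{ji})=F_{ij}$, is correct and matches the paper's argument.
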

\begin{proof}
 Comparing \eqref{varkappa0} with the relations \eqref{x->I-1} through \eqref{x->I-4} we deduce that $\varkappa(F_{ij})=-F_{ji}$ for all ${-}n\leq i,j\leq n$. This implies that 
  $\Phi_{R,J}$ is given on $t_{ij}^{(1)}$ and $t_{ij}^{(2)}$ by the formulas 
\begin{equation*}
 t_{ij}^{(1)}\mapsto F_{ij}\quad \text{  and }\quad t_{ij}^{(2)}\mapsto \zeta^{-1}J(F_{ij})+\tfrac{1}{2}\msum_{k} F_{ik}F_{kj}\quad \text{ for all }\;{-}n\leq i,j\leq n, 
\end{equation*}
from which the proposition follows. 
\end{proof}


\subsection{An approach independent of the universal \texorpdfstring{$R$}{R}-matrix}\label{indapp}

Our proof that $\varphi_{R,J}$, and thus $\Phi_{R,J}$, is an isomorphism relies on Drinfeld's remarkable result that there exists $\mcR(u)\in 1+u^{-1}(Y_\zeta(\mfg_N)\ot Y_\zeta(\mfg_N))[[u^{-1}]]$ satisfying all of the properties listed in Theorem \ref{uniR}.  The purpose of this section is to give a more direct and conceptually more rudimentary proof that $\Phi_{J,R}=\Phi_{R,J}^{-1}$ is a Hopf algebra isomorphism which does not rely on $\mcR(u)$. In order to do this, we will first make use of the isomorphism of Theorem \ref{T:Ycr(g)-} together with the presentation of $Y_\zeta^{\mr{cr}}(\mfg_N)$ given in Theorem \ref{smallerset} to obtain a realization of $Y_\zeta(\mfg_N)$ which better suits our goal. 

\begin{proposition}\label{L:J->R}
Let $\mfg_N$ be a simple Lie algebra of orthogonal or symplectic type not isomorphic to $\mfsl_2$. Then $Y_\zeta(\mfg_N)$ is generated by $\{F_{ij},J(F_{ij})\}_{-n\le i,j\le n}$ subject only to the defining relations 
\begin{gather}
 [F_{ij},F_{kl}]=\delta_{jk}F_{il}-\delta_{il}F_{kj}+\theta_{ij}\delta_{j,-l}F_{k,-i}-\theta_{ij}\delta_{i,-k}F_{-j,l},\quad F_{ij}+\theta_{ij}F_{-j,-i}=0, \label{Jred:1}\\
 [F_{ij},J(F_{kl})]=\delta_{jk}J(F_{il})-\delta_{il}J(F_{kj})+\theta_{ij}\delta_{j,-l}J(F_{k,-i})-\theta_{ij}\delta_{i,-k}J(F_{-j,l})=[J(F_{ij}),F_{kl}], \label{Jred:2}\\
  J(F_{ij})+\theta_{ij}J(F_{-j,-i})=0,\label{Jred:2.5}
 \end{gather}
 for all ${-}n\leq i,j\leq n$, together with 
 \begin{equation}
 4\zeta^{-2}[J(F_{ii}),J(F_{jj})]=\msum_{a,b}\big[ F_{ia}F_{ai},F_{jb}F_{bj}\big]+2\msum_{a}\big(F_{ja}F_{a,-i}F_{-i,j}-F_{j,-i}F_{-i,a}F_{aj}\big) \quad \forall \quad 1\leq i \neq j\leq n.\label{Jred:3}
\end{equation}
\end{proposition}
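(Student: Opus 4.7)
Let $\mcA$ denote the algebra defined by the presentation in the statement; the plan is to show $\mcA\cong Y_\zeta(\mfg_N)$ by exhibiting mutually inverse surjections on generators. That every relation \eqref{Jred:1}--\eqref{Jred:3} holds in $Y_\zeta(\mfg_N)$ is essentially built into the $J$-presentation: \eqref{Jred:1} is \eqref{J0} restricted to $\mfg_N$; \eqref{Jred:2} packages the $\mr{ad}$-equivariance of $J$ from \eqref{J0}; \eqref{Jred:2.5} follows from \eqref{J1} applied to the relation $F_{ij}+\theta_{ij}F_{-j,-i}=0$; and \eqref{Jred:3} is a reformulation of the identity $[J(h_i),J(h_j)]=\zeta^2[v_j,v_i]$ established in Step 1.2 of the proof of Theorem \ref{T:Ycr(g)-}. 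Indeed, each $F_{aa}$ belongs to the Cartan subalgebra, so the argument of that step (which only used $\mfg\neq\mfsl_2$ and the $J$-relations) gives $[J(F_{ii}),J(F_{jj})]=\zeta^2[v_{F_{jj}},v_{F_{ii}}]$, and expanding $v_{F_{aa}}$ in the basis $\{F_{ab}\}$ recovers the right-hand side of \eqref{Jred:3}. Since $Y_\zeta(\mfg_N)$ is evidently generated by $\{F_{ij},J(F_{ij})\}$, this yields a surjection $\pi\colon\mcA\onto Y_\zeta(\mfg_N)$.

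For the inverse direction, the plan is to pass through the current presentation. By Theorems \ref{T:Ycr(g)-} and \ref{smallerset}, $Y_\zeta(\mfg_N)$ is presented by generators $h_i,\,x^\pm_i,\,h_{i1},\,x^\pm_{i1}$ ($i\in\mcI$) subject to the reduced relations \eqref{Le1}--\eqref{Le3}. It is therefore enough to produce a homomorphism $\Psi\colon Y_\zeta^\cur(\mfg_N)\to\mcA$ determined by
\begin{gather*}
x_i^\pm\mapsto x_i^\pm,\quad h_i\mapsto h_i,\quad h_{i1}\mapsto J(h_i)-\zeta\,v_i,\quad x_{i1}^\pm\mapsto J(x_i^\pm)-\zeta\,w^\pm_i,
\end{gather*}
where the images of $h_i$ and $x_i^\pm$ are read off from \eqref{x->I-1}--\eqref{x->I-4} and $v_i,w_i^\pm$ are the explicit polynomials in $\{F_{ab}\}$ prescribed by \eqref{isoYY1}--\eqref{isoYY2}, all viewed inside $\mfU\mfg_N\subset\mcA$ (this embedding being supplied by \eqref{Jred:1}). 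The composition $\Psi\circ\Phi_{\mr{cr},J}^{-1}$ will then invert $\pi$ on generators, hence both maps will be two-sided inverses.

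The bulk of the verifications of \eqref{Le1}--\eqref{Le3} in $\mcA$ are carried out verbatim by the argument of Step 1.1 of the proof of Theorem \ref{T:Ycr(g)-}, since those computations rely only on the $\mr{ad}$-equivariance of $J$ (our \eqref{Jred:2}) and on the identities of Lemma \ref{lem:GNW} (which live in $\mfU\mfg_N$). The one relation requiring \eqref{Jred:3} is $[\tl{h}_{i1},\tl{h}_{j1}]=0$, where $\tl{h}_{i1}=h_{i1}-\tfrac12\zeta h_i^2$. Expanding and using $[J(h_i),v_j]=J([h_i,v_j])=0$ by \eqref{Jred:2} (since $v_j$ commutes with $\mfh_N$ in $\mfU\mfg_N$), this reduces to $[J(h_i),J(h_j)]=\zeta^2[v_j,v_i]$. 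Writing each Chevalley Cartan element as a $\C$-linear combination of $F_{11},\ldots,F_{nn}$ via \eqref{x->I-1}--\eqref{x->I-4}, bilinearity together with \eqref{Jred:2.5} further reduces it to the family $[J(F_{aa}),J(F_{bb})]=\zeta^2[v_{F_{bb}},v_{F_{aa}}]$ for $1\le a,b\le n$, which is trivial when $a=b$ and is exactly \eqref{Jred:3} when $a\neq b$.

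The main technical obstacle, both for the implication ``\eqref{Jred:3} $\Rightarrow [\tl h_{i1},\tl h_{j1}]=0$'' above and for the initial verification of \eqref{Jred:3} in $Y_\zeta(\mfg_N)$, is the explicit identification of $4[v_{F_{bb}},v_{F_{aa}}]$ with the right-hand side of \eqref{Jred:3}. This is a direct but delicate computation in $\mfU\mfg_N$: one expands $v_{F_{aa}}$ as a sum over positive roots, invokes the bracket relations \eqref{FijFkl}, and keeps careful track of the signs $\theta_{ab}$ that distinguish the orthogonal and symplectic cases.
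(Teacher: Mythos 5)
Your overall architecture coincides with the paper's: you recognize \eqref{Jred:1}--\eqref{Jred:2.5} as a restatement of \eqref{J0}--\eqref{J1}, you tie \eqref{Jred:3} to the identity $[J(h_i),J(h_j)]=\zeta^2[v_j,v_i]$ extracted from Step~1.2 of the proof of Theorem~\ref{T:Ycr(g)-}, and you compare the two presentations by routing through the reduced current presentation of Theorem~\ref{smallerset}. The genuine gap is that the entire substance of the proposition --- the assertion that $4[v_{F_{jj}},v_{F_{ii}}]$, expanded in the basis $\{F_{ab}\}$, is \emph{exactly} the right-hand side of \eqref{Jred:3} --- is deferred as ``a direct but delicate computation.'' This is not a routine verification one can wave at: it occupies most of the paper's proof, proceeding first through the intermediate identity
$16\zeta^{-2}[J(F_{ii}),J(F_{jj})]=\msum_{k<l,\,r<s}\big[F_{sr}[F_{jj},F_{rs}],F_{lk}[F_{ii},F_{kl}]\big]$
(established by re-expressing $v_k$ as $\tfrac{\ka}{2}h_k+\tfrac12\msum_{\al\in\Delta^+}(\al,\al_k)x_\al^- x_\al^+-\tfrac12 h_k^2$ and matching root vectors with the $F_{ab}$), and then through a careful expansion of $\msum_{r<s}F_{sr}[F_{kk},F_{rs}]$ in which two of the resulting double sums must be shown to vanish and a third must be massaged into the cubic terms $\msum_a(F_{ja}F_{a,-i}F_{-i,j}-F_{j,-i}F_{-i,a}F_{aj})$. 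Without this computation the specific closed form \eqref{Jred:3} is unverified, so the proof is incomplete in its essential step.

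Two smaller points. First, your justification ``$[J(h_i),v_j]=J([h_i,v_j])=0$'' is wrong: $J$ is only defined on $\mfg_N$, not on $\mfU\mfg_N$, and the commutator $[J(h_i),v_j]$ is a genuinely nonzero element of $Y_\zeta(\mfg_N)$ (it is a combination of terms $\{J(x_\al^+),x_\al^-\}-\{x_\al^+,J(x_\al^-)\}$). What the reduction of $[h_{i1},h_{j1}]=0$ to $[J(h_i),J(h_j)]=\zeta^2[v_j,v_i]$ actually requires is the \emph{symmetry} $[J(h_i),v_j]=[J(h_j),v_i]$ (Proposition~3.16 of \cite{GNW}), which makes the two cross terms cancel against each other rather than vanish individually. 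Second, the notation $v_{F_{aa}}$ needs care: $v_i$ is not linear in $h_i$ because of the $-\tfrac12 h_i^2$ summand, so only the brackets $[v_i,v_j]=[\tl v_i,\tl v_j]$ extend bilinearly to arbitrary Cartan elements; your extension of Step~1.2 to the basis $\{F_{aa}\}$ should be phrased in terms of $\tl v$.
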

\begin{proof}
 It is immediate from the relations \eqref{J0} and \eqref{J1} that \eqref{Jred:1}, \eqref{Jred:2} and \eqref{Jred:2.5} are satisfied in $Y_\zeta(\mfg_N)$ for all ${-}n\leq i,j\leq n$. On the other hand, it is not difficult to see that \eqref{Jred:1}--\eqref{Jred:2.5} imply \eqref{J0} and \eqref{J1}. 
One obtains the definition of $J(X)$ for arbitrary $X\in \mfg_N$ in the natural way: first write $X=\sum_{i+j(\geq) 0}a_{ij}F_{ij}$ and then set $J(X)=\sum_{i+j(\geq) 0}a_{ij}J(F_{ij})$.

In Step 1.2 of the proof of Theorem \ref{T:Ycr(g)-}, we argued that, given \eqref{J0} and \eqref{J1}, the relation obtained by applying $\Phi_{\mr{cr},J}$ to \eqref{Le1} is equivalent to 
\begin{equation}
 [J(h_i),J(h_j)]=\zeta^2[v_j,v_i] \quad \text{ for all }\; i,j\in \mcI. \label{JF.1}
\end{equation}
Therefore, \eqref{Jred:1}, \eqref{Jred:2}, \eqref{Jred:2.5} and \eqref{JF.1} provide a set of defining relations for $Y_\zeta(\mfg_N)$. To complete the proof, we need to rewrite \eqref{JF.1} in terms of the basis $\{F_{ii}\}_{i=1}^n$ of the Cartan subalgebra $\mfh_N$ of $\mfg_N$, and then show that the resulting relations can be expressed as in \eqref{Jred:3}.

\medskip 

\noindent \textit{Claim:} The set of relations \eqref{JF.1} are equivalent to
\begin{equation}
16\zeta^{-2}[J(F_{ii}),J(F_{jj})]=\msum_{k< l,\;r<s}\big[F_{sr}[F_{jj},F_{rs}],F_{lk}[F_{ii},F_{kl}] \big] \quad \text{ for all }\; 1\leq i,j\leq n. \label{JF:cl}
\end{equation}
\begin{proof}[Proof of claim]
The relations of the claim are linear in both $F_{jj}$ and $F_{ii}$. Hence, we can rewrite them equivalently in terms of the basis $\{h_k\}_{k\in \mcI}$ (see \eqref{x->I-1} - \eqref{x->I-4}) as 
\begin{equation}
 16\zeta^{-2}[J(h_i),J(h_j)]=\msum_{k< l,\;r<s}\big[F_{sr}[h_j,F_{rs}],F_{lk}[h_i,F_{kl}] \big]. \label{JF:cl2}
\end{equation}
Note that $\sum_{r<s,\;r+s<0}F_{sr}[h_j,F_{rs}]=\sum_{r<s,\;r+s<0}F_{-r,-s}[h_j,F_{-s,-r}]=\sum_{r<s,\;r+s>0}F_{sr}[h_j,F_{rs}]$, and therefore 
\begin{equation*}
 \msum_{r<s}F_{sr}[h_j,F_{rs}]=\msum_{1\le s\le n} F_{s,-s}[h_j,F_{-s,s}]+2\msum_{r<s,\;r+s>0}F_{sr}[h_j,F_{rs}].
\end{equation*}
The same relation holds with $(r,s)$ replaced by $(k,l)$ and $h_j$ by $h_i$. For each ${-}n\leq i_1<i_2\leq n$ with $i_1+i_2(\geq) 0$, let $\alpha_{i_1,i_2}$ 
denote the positive root $\mr{sign}(i_1)\,\eps_{|i_1|}-\mr{sign}(i_2)\,\eps_{|i_2|}$. Then we have $x_{\alpha_{i_1,i_2}}^+=(\sqrt{2})^{-\delta_{i_1,-i_2}}F_{i_1,i_2}$ and 
$x_{\alpha_{i_1,i_2}}^-=(\sqrt{2})^{-\delta_{i_1,-i_2}}F_{i_2,i_1}$. This implies that \eqref{JF:cl2}, and thus \eqref{JF:cl}, is equivalent to
\begin{align*}
  4\zeta^{-2}[J(h_i),J(h_j)]= {} &\left[\tfrac{1}{2}\msum_{1\le s\le n} F_{s,-s}[h_j,F_{-s,s}]+\msum_{r<s,\;r+s>0}F_{sr}[h_j,F_{rs}],\; \tfrac{1}{2}\msum_{1\le l\le n} F_{l,-l}[h_i,F_{-l,l}]+\msum_{k<l,\;k+l>0}F_{lk}[h_i,F_{kl}] \right]\\
                            = {} &\msum_{\alpha,\beta\in \Delta_+}\Big[x_{\alpha}^-[h_j,x_\alpha^+],x_{\beta}^-[h_i,x_\beta^+]\Big]=4[v_j,v_i], 
\end{align*}
where the last equality follows from the fact that $v_k=\tfrac{\ka}{2}\,h_k+\tfrac{1}{2}\sum_{\al\in \Delta_+}(\al,\al_k)\,x_\al^-x_\al^+-\tfrac{1}{2}h_k^2$ for all $k\in \mcI$ (see the proof of Corollary \ref{C:J(X)-b}). This completes the proof of the claim. \let\qed\relax
\end{proof}
Our next step is to argue that the right-hand side of \eqref{Jred:3} is equal to the right-hand side of \eqref{JF:cl} up to a factor of~$4$. First note that 
\begin{equation*}
 \msum_{r<s}F_{sr}[F_{kk},F_{rs}]=\msum_{r<s}(\delta_{kr}-\delta_{ks}+\delta_{k,-s}-\delta_{k,-r})F_{sr}F_{rs}=\msum_{s=k+1}^nF_{sk}F_{ks}-\msum_{r=-n}^{k-1}F_{kr}F_{rk} + \msum_{r=-n}^{-k-1}F_{-k,r}F_{r,-k} - \msum_{s=-k+1}^nF_{s,-k}F_{-k,s}.
\end{equation*}
When $k=j$, we expand this as 
\begin{align*}
  \msum_{r<s}F_{sr}[F_{jj},F_{rs}]= {} &-2\msum_{r=-n}^{n}F_{jr}F_{rj}+4\msum_{r=j+1}^{n}F_{jr}F_{rj}+2\,\Big(\msum_{r=j+1}^nF_{rr}-(n-j)F_{jj}\Big)+2F_{jj}^2.
\end{align*}
and when $k=i$, we instead expand it as 
\begin{equation*}
  \msum_{r<s}F_{sr}[F_{ii},F_{rs}]=2\msum_{r=-n}^{n}F_{ir}F_{ri}-4\msum_{r=-n}^{i-1}F_{ir}F_{ri}+2\,\Big(\msum_{r=i+1}^nF_{rr}-(n-i)F_{ii}\Big)-2F_{ii}^2.
\end{equation*}
Thus, using that elements of the form $F_{kr}F_{rk}$ belong to the centralizer of $\mf{Uh}_N$ in $\mf{Ug}_N$, we deduce from \eqref{JF:cl} and the above relations that 
\begin{align}
\label{Jred:3.-1}
4\zeta^{-2}[J(F_{ii}),J(F_{jj})] =\msum_{b,a=-n}^n[F_{ia}F_{ai},F_{jb}F_{bj}]+2\msum_{a=-n}^{j-1}\msum_{r=-n}^{i-1}[F_{ja}F_{aj},F_{ir}F_{ri}]+2\msum_{b=i+1}^n\msum_{r=j+1}^n[F_{jr}F_{rj},F_{ib}F_{bi}].
\end{align}
We now show that $\sum_{r>j,b>i}[F_{jr}F_{rj},F_{ib}F_{bi}]=0$. By (anti)symmetry, it is enough to prove this in the case where $i>j$. For each pair $(r,b)$ such that
$s>j$ and $b>i$, we have 
\begin{align*}
 [F_{jr}F_{rj},F_{ib}F_{bi}]= {} &-\delta_{rb}F_{jr}F_{ij}F_{bi}-\delta_{ri}F_{ji}F_{ib}F_{bj}+\delta_{br}F_{ir}F_{ji}F_{rj} 
                               +\delta_{ir}F_{jb}F_{bi}F_{ij}.
\end{align*}
Taking the sum over all $r>j$ and $b>i$, we obtain 
\begin{align}\label{Jred:3.0}
\begin{split}
 \msum_{r>j,b>i}[F_{jr}F_{rj},F_{ib}F_{bi}]= {} &\msum_{b>i}\left(F_{jb}[F_{bi},F_{ij}]+[F_{ib},F_{ji}]F_{bj}\right)=\msum_{b>i}\left(F_{jb}F_{bj}-F_{jb}F_{bj}\right)=0. 
\end{split}                               
\end{align}
Next, we expand $\sum_{a=-n}^{j-1}\sum_{r=-n}^{i-1}[F_{ja}F_{aj},F_{ir}F_{ri}]$. As before we assume $i>j$. This gives
\begin{align*}
 [F_{ja}F_{aj},F_{ir}F_{ri}]= {} &F_{ja}(-\delta_{ar}F_{ij}+\theta_{aj}\delta_{j,-r}F_{i,-a}-\theta_{aj}\delta_{a,-i}F_{-j,r})F_{ri}
                              + F_{ja}F_{ir}(\delta_{jr}F_{ai}-\theta_{aj}\delta_{a,-r}F_{-j,i})\\
                            &+F_{ir}(\delta_{ar}F_{ji}+\theta_{ja}\delta_{a,-i}F_{r,-j}-\theta_{ja}\delta_{j,-r}F_{-a,i})F_{aj}
                            +(-\delta_{jr}F_{ia}+\theta_{ja}\delta_{a,-r}F_{i,-j})F_{ri}F_{aj}.
\end{align*}
Taking the sum over ${-}n\leq a\leq j-1$ and ${-}n\leq r\leq i-1$, we find that 
\begin{align}\label{Jred:3.1}
\begin{split}
 \msum_{a=-n}^{j-1}\msum_{r=-n}^{i-1}[F_{ja}F_{aj},F_{ir}F_{ri}]=&\msum_{a<j}\theta_{aj}F_{ja}F_{i,-a}F_{-j,i}+\msum_{a<i}\theta_{j,-i}F_{ia}F_{a,-j}F_{-i,j}-\msum_{a=-j+1}^{i-1}\theta_{-a,j}F_{j,-a}F_{ia}F_{-j,i} \\
 &+\msum_{a=-j+1}^{i-1}\theta_{j,-a}F_{i,-j}F_{ai}F_{-a,j}-\msum_{a<i}\theta_{-i,j}F_{j,-i}F_{-j,a}F_{a,i}-\msum_{a<j}\theta_{ja}F_{i,-j}F_{-a,i}F_{aj}. 
\end{split}
 \end{align}
The first term on the first line on the right-hand side of \eqref{Jred:3.1} can be rewritten as follows:
\begin{equation}\label{Jred:3.1a}
 \msum_{a<j}\theta_{aj}F_{ja}F_{i,-a}F_{-j,i}=\msum_aF_{ja}F_{a,-i}F_{-i,j}-\msum_{a=j}^nF_{ja}F_{a,-i}F_{-i,j}.
\end{equation}
For the second and third terms, we have 
\begin{equation}\label{Jred:3.1b}
\begin{gathered}
 \msum_{a<i}\theta_{j,-i}F_{ia}F_{a,-j}F_{-i,j}=\msum_{a=-i+1}^nF_{ja}F_{a,-i}F_{-i,j} +\msum_{a=-i+1}^n[F_{a,-i},F_{ja}]F_{-i,j}, \\ 
 -\msum_{a=-j+1}^{i-1}\theta_{-a,j}F_{j,-a}F_{ia}F_{-j,i}=-\msum_{a=-i+1}^{j-1}F_{j,a}F_{a,-i}F_{-i,j}.
 \end{gathered}
\end{equation}
Adding \eqref{Jred:3.1a} and \eqref{Jred:3.1b}, we obtain that the first line on the right-hand side of \eqref{Jred:3.1} is equal to
\begin{equation*}
 \msum_aF_{ja}F_{a,-i}F_{-i,j}+\msum_{a=-i+1}^n[F_{a,-i},F_{ja}]F_{-i,j}. 
\end{equation*}
Similar computations show that the second line of \eqref{Jred:3.1} is equal to $-\sum_a F_{j,-i}F_{-i,a}F_{a,j}-\sum_{a=-i+1}^nF_{j,-i}[F_{aj},F_{-ia}]$.
Hence, 
\begin{equation*}
 \msum_{a=-n}^{j-1}\msum_{r=-n}^{i-1}[F_{ja}F_{aj},F_{ir}F_{ri}]=\msum_a\left(F_{ja}F_{a,-i}F_{-i,j}-F_{j,-i}F_{-i,a}F_{aj} \right)+\msum_{a=-i+1}^n\left([F_{a,-i},F_{ja}]F_{-i,j}- F_{j,-i}[F_{aj},F_{-ia}]\right).
\end{equation*}
By \eqref{Jred:3.-1} and \eqref{Jred:3.0}, all that remains is to show is that the second summation on the right-hand side vanishes, which is immediate by the bracket relations of $\mfg_N$. 
\end{proof}

\begin{proposition}\label{T:J->R}
 Let $\Phi_{J,R}$ denote the assignment $\{J(F_{ij}),F_{ij}\,:\,{-}n\leq i,j\leq n\}\to Y_\zeta^R(\mfg_N)$ defined by 
 \begin{equation}
  F_{ij}\mapsto t_{ij}^{(1)},\quad J(F_{ij})\mapsto \zeta\left(t_{ij}^{(2)}-\tfrac{1}{2}\msum_k t_{ik}^{(1)}t_{kj}^{(1)}\right). \label{J->R}
 \end{equation}
Then $\Phi_{J,R}$ extends uniquely to an isomorphism $\Phi_{J,R}:Y_\zeta(\mfg_N)\to Y_\zeta^R(\mfg_N)$ of Hopf algebras. 
\end{proposition}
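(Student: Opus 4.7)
The plan is to exploit the economical presentation of $Y_\zeta(\mfg_N)$ provided by Proposition \ref{L:J->R}, which reduces well-definedness of $\Phi_{J,R}$ to checking a finite list of identities in $Y_\zeta^R(\mfg_N)$. Once this is done, bijectivity will follow from a filtered/graded argument based on Proposition \ref{grYzeta} and \eqref{iso:grYR-Ug}, and Hopf compatibility will be verified on generators. The advantage of this route is that it avoids any appeal to the universal $R$-matrix of Theorem \ref{uniR}.

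The relations \eqref{Jred:1} hold in $Y_\zeta^R(\mfg_N)$ because the assignment $F_{ij} \mapsto t_{ij}^{(1)}$ is the embedding $\mfU\mfg_N \hookrightarrow Y_\zeta^R(\mfg_N)$ noted after \eqref{iso:grYR-Ug}; the identity $t_{ij}^{(1)} + \theta_{ij}\, t_{-j,-i}^{(1)} = 0$ is the coefficient of $u^{-1}$ in \eqref{unitary}. For \eqref{Jred:2}, one extracts the coefficient of $u^{-1} v^{-2}$ from the $RTT$-relation \eqref{RTT:BCD} to obtain $[t_{ij}^{(1)}, t_{kl}^{(2)}]$ and then adds the correction $-\tfrac12 \bigl[t_{ij}^{(1)}, \msum_a t_{ka}^{(1)} t_{al}^{(1)}\bigr]$, which is computed entirely inside $\mfU\mfg_N$; rearrangement yields $\Phi_{J,R}$ applied to the right-hand side of \eqref{Jred:2}. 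The antisymmetry \eqref{Jred:2.5} is obtained from the coefficient of $u^{-2}$ in \eqref{unitary} together with the definition of $\Phi_{J,R}(J(F_{ij}))$ and the basic identity $\theta_{ja}\theta_{ai} = \theta_{ij}$ valid in both orthogonal and symplectic cases.

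The main obstacle is relation \eqref{Jred:3}. To verify it, one expands
\[
\tfrac{4}{\zeta^2}\bigl[\Phi_{J,R}(J(F_{ii})),\,\Phi_{J,R}(J(F_{jj}))\bigr]
\]
as a sum of four brackets involving $t^{(2)}$-$t^{(2)}$, $t^{(2)}$-$t^{(1)}t^{(1)}$ and $t^{(1)}t^{(1)}$-$t^{(1)}t^{(1)}$ contributions. The first is computed from the coefficient of $u^{-2} v^{-2}$ in \eqref{RTT:BCD}, the middle two from the coefficients of $u^{-1} v^{-2}$ and $u^{-2} v^{-1}$, and the last is a calculation in $\mfU\mfg_N$. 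All $t^{(2)}$-contributions must cancel among themselves: this cancellation is driven by the structure of the second term on the right-hand side of \eqref{RTT:BCD} (the part involving the $\theta_{ij}$'s and $\ka$), together with the previously established relations \eqref{Jred:1} and \eqref{Jred:2.5}. After bookkeeping of the surviving degree-zero terms using the bracket relations of $\mfg_N$, the result coincides with $\tfrac{\zeta^2}{4}$ times the right-hand side of \eqref{Jred:3}. This is where the overwhelming majority of the technical work lies.

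Once well-definedness is established, equip $Y_\zeta(\mfg_N)$ with the filtration $\deg F_{ij} = 0$, $\deg J(F_{ij}) = 1$, and $Y_\zeta^R(\mfg_N)$ with $\deg t_{ij}^{(r)} = r-1$; then $\Phi_{J,R}$ is filtered. The induced map $\gr \Phi_{J,R}$ sends $\overline{F_{ij}}$ to $\overline{t_{ij}^{(1)}}$ and $\overline{J(F_{ij})}$ to $\zeta\, \overline{t_{ij}^{(2)}}$, which under the identifications $\gr Y_\zeta(\mfg_N) \cong \mfU\mfg_N[s] \cong \gr Y_\zeta^R(\mfg_N)$ of Proposition \ref{grYzeta} and \eqref{iso:grYR-Ug} both correspond to $F_{ij}$ and $\zeta\, F_{ij}\ot s$ respectively. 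Since $\mfg_N$ is perfect, $\mfU\mfg_N[s]$ is generated as an algebra by $\mfg_N \oplus (\mfg_N \ot s)$, so $\gr \Phi_{J,R}$ is the identity automorphism of $\mfU\mfg_N[s]$, and a standard filtered argument then upgrades this to bijectivity of $\Phi_{J,R}$. For Hopf compatibility it suffices to check the coproduct on generators: $t_{ij}^{(1)}$ is primitive, and for $J(F_{ij})$ the identity $[F_{ij}\ot 1, \Omega] = \msum_k (F_{ik}\ot F_{kj} - F_{kj}\ot F_{ik})$ in $\mfU\mfg_N \ot \mfU\mfg_N$ (obtained using \eqref{Omega}) exactly matches the non-primitive contribution $\msum_k t_{ik}^{(1)} \ot t_{kj}^{(1)}$ to $\Delta(t_{ij}^{(2)})$ provided by Proposition \ref{P:RTT}. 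Since a bialgebra isomorphism is automatically a Hopf algebra isomorphism, this completes the proof.
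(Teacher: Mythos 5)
Your proposal follows essentially the same route as the paper: reduction to the presentation of Proposition \ref{L:J->R}, extraction of coefficients from \eqref{RTT:BCD} and \eqref{unitary} to verify \eqref{Jred:1}--\eqref{Jred:3}, the associated-graded argument via Proposition \ref{grYzeta} and \eqref{iso:grYR-Ug} for bijectivity, and the coproduct check on generators using $[F_{ii}\ot 1,\Omega]$. The only caveat is that your treatment of \eqref{Jred:3} is an outline rather than a verification — this single identity occupies the bulk of the paper's proof — and your claim that the $t^{(2)}$-contributions ``cancel among themselves'' is slightly imprecise: in the actual computation the residual $t^{(2)}$-terms surviving in the analogue of \eqref{J->R.4} are eliminated using the symmetry relation \eqref{t^2:tr}, which converts them into additional degree-zero (cubic in $t^{(1)}$) terms that contribute essentially to the right-hand side of \eqref{Jred:3}, rather than vanishing outright.
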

\begin{proof} 
\noindent \textit{Step 1}: $\Phi_{J,R}$ is a homomorphism of algebras. 
\medskip 

To prove this, we will check that $\Phi_{J,R}$ preserves the relations of Proposition \ref{L:J->R}. That is, 
we must show that $\Phi_{J,R}(J(F_{ij}))$ and $\Phi_{J,R}(F_{ij})$ satisfy the relations \eqref{Jred:1}, \eqref{Jred:2}, \eqref{Jred:2.5} and \eqref{Jred:3}. 
Since $F_{ij}\mapsto t_{ij}^{(1)}$ defines an embedding, \eqref{Jred:1} is preserved by $\Phi_{J,R}$. When $\mfg_N=\mfsp_{2n}$, it was established in Lemma 5.29 of \cite{AMR} that the subspace of $Y_\zeta(\mfg_N)$ spanned by the elements $\Phi_{J,R}(J(F_{ij}))$ is isomorphic to the adjoint representation of $\mfg_N$, which shows that, in this case, $\Phi_{J,R}$ preserves \eqref{Jred:2} and \eqref{Jred:2.5}. Aside from the appearance of the complex parameter $\zeta$, the proof in the general setting is the same, but we provide some details nonetheless. Taking the $u^{-1}$ coefficient of both sides of relation \eqref{RTT:BCD}, we obtain
 \begin{equation*}
  \zeta\,[ t_{ij}^{(1)},t_{kl}(v)]=\delta_{jk}\zeta t_{il}(v)-\delta_{il}\zeta t_{kj}(v)+\theta_{ij}\delta_{j,-l}\zeta t_{k,-i}(v)-\theta_{ij}\zeta \delta_{i,-k}t_{-j,l}(v).
 \end{equation*}
 Comparing the $v^{-2}$ terms gives the identity 
 \begin{equation}
  [t_{ij}^{(1)},t_{kl}^{(2)}]=\delta_{jk} t_{il}^{(2)}-\delta_{il} t_{kj}^{(2)}+\theta_{ij}\delta_{j,-l} t_{k,-i}^{(2)}-\theta_{ij} \delta_{i,-k}t_{-j,l}^{(2)}. \label{J->R.1}
 \end{equation}
We also have 
\begin{equation*}
[t_{ij}^{(1)},t_{ka}^{(1)}t_{al}^{(1)}]=(\delta_{jk} t_{ia}^{(1)}-\delta_{ia} t_{kj}^{(1)}+\theta_{ij}\delta_{j,-a}t_{k,-i}^{(1)}-\theta_{ij} \delta_{i,-k}t_{-j,a}^{(1)})\,t_{al}^{(1)}+t_{ka}^{(1)}\,(\delta_{ja} t_{il}-\delta_{il}t_{aj}+\theta_{ij}\delta_{j,-l} t_{a,-i}-\theta_{ij}\delta_{i,-a}t_{-j,l} ).
\end{equation*}
Taking the sum as $a$ varies over ${-}n\leq a\leq n$, we obtain 
\begin{equation*}
\bigg[t_{ij}^{(1)},\msum_at_{ka}^{(1)}t_{al}^{(1)}\bigg]=
\delta_{jk} \msum_a t_{ia}^{(1)}t_{al}^{(1)}-\delta_{il}\msum_a t_{ka}^{(1)}t_{aj}^{(1)}+\theta_{ij}\delta_{j,-l}\msum_a t_{ka}^{(1)}t_{a,-i}^{(1)}-\theta_{ij} \delta_{i,-k}\msum_a t_{-j,a}^{(1)}t_{al}^{(1)}.
\end{equation*}
Combining this with \eqref{J->R.1}, we see that $\Phi_{J,R}$ preserves the relation \eqref{Jred:2}. 

We now turn to \eqref{Jred:2.5}. Taking the $u^{-2}$ coefficient of both sides of the second equality in \eqref{TT=TT=I} and using that $t_{kl}^{(1)}+\theta_{kl}t_{-l,-k}^{(1)}=0$ for all $k$ and $l$, we find that
 \begin{equation}
  t_{ij}^{(2)}=-\theta_{ij}t_{-j,-i}^{(2)}-\ka\, t_{ij}^{(1)}+\msum_at_{ia}^{(1)}t_{aj}^{(1)}. \label{t^2:tr}
 \end{equation}
Thus 
\begin{align*}
 \zeta^{-1}\left(\Phi_{J,R}(J(F_{ij}))+\theta_{ij}\Phi_{J,R}(J(F_{-j,-i})\right)=\tfrac{1}{2}\msum_k\left( t_{ik}^{(1)}t_{kj}^{(1)}-\theta_{ij}t_{-j,-k}^{(1)}t_{-k,-i}^{(1)}\right)-\ka\,t_{ij}^{(1)} =\tfrac{1}{2}\msum_k \Big[t_{ik}^{(1)},t_{kj}^{(1)}\Big]-\ka\, t_{ij}^{(1)}=0.
\end{align*}
 We now turn to proving that $\Phi_{J,R}$ respects the relation \eqref{Jred:3}. We need to see that 
 \begin{equation}\label{ter-chk}
  4\left[t_{ii}^{(2)}-\tfrac{1}{2}\msum_k t_{ik}^{(1)}t_{ki}^{(1)},t_{jj}^{(2)}-\tfrac{1}{2}\msum_k t_{jk}^{(1)}t_{kj}^{(1)}\right]=\msum_{a,b}\left[ t_{ia}^{(1)}t_{ai}^{(1)},t_{jb}^{(1)}t_{bj}^{(1)}\right]+2\msum_a\left(t_{ja}^{(1)}t_{a,-i}^{(1)}t_{-i,j}^{(1)}-t_{j,-i}^{(1)}t_{-i,a}^{(1)}t_{aj}^{(1)}\right). 
 \end{equation}
The left-hand side is equal to 
\begin{equation}\label{J->R.2}
 4\Big[t_{ii}^{(2)},t_{jj}^{(2)}\Big]+2\msum_k \Big[t_{jk}^{(1)}t_{kj}^{(1)},t_{ii}^{(2)}\big]-2\msum_k \Big[t_{ik}^{(1)}t_{ki}^{(1)},t_{jj}^{(2)}\Big]+\msum_{a,b}\Big[ t_{ia}^{(1)}t_{ai}^{(1)},t_{jb}^{(1)}t_{bj}^{(1)}\Big].
\end{equation}
Consider the two middle terms. By \eqref{J->R.1}, for $1\le c \neq d \le n$, we have 
\begin{gather*}
 \Big[t_{dk}^{(1)}t_{kd}^{(1)},t_{cc}^{(2)}\Big]=t_{dk}^{(1)}\Big[t_{kd}^{(1)},t_{cc}^{(2)}\Big]+\Big[t_{dk}^{(1)},t_{cc}^{(2)}\Big]\,t_{kd}^{(1)}=t_{dk}^{(1)}\Big(-\delta_{kc}t_{cd}^{(2)}-\theta_{kd}\delta_{k,-c}t_{-d,c}^{(2)}\Big )+\Big(\delta_{kc}t_{dc}^{(2)}+\theta_{dk}\delta_{k,-c}t_{c,-d}^{(2)}\Big)\,t_{kd}^{(1)}.
\end{gather*}
Taking the sum over all $k$ yields 
\begin{gather*}
 \msum_k \Big[t_{dk}^{(1)}t_{kd}^{(1)},t_{cc}^{(2)}\Big]=-t_{dc}^{(1)}t_{cd}^{(2)}-\theta_{-c,d}t_{d,-c}^{(1)}t_{-d,c}^{(2)} +t_{dc}^{(2)}t_{cd}^{(1)}+\theta_{d,-c}t_{c,-d}^{(2)}t_{-c,d}^{(1)}.
\end{gather*}
Applying again \eqref{J->R.1} we find that $t_{cd}^{(1)}t_{dc}^{(2)}=t_{dc}^{(2)}t_{cd}^{(1)}+t_{cc}^{(2)}-t_{dd}^{(2)}$. Therefore, setting $(c,d)=(i,j)$ and $(c,d)=(j,i)$, we obtain
\begin{equation}
 \msum_k \Big(\Big[t_{jk}^{(1)}t_{kj}^{(1)},t_{ii}^{(2)}\Big]-\Big[t_{ik}^{(1)}t_{ki}^{(1)},t_{jj}^{(2)}\Big]\Big)=-2t_{ji}^{(1)}t_{ij}^{(2)}+2t_{ji}^{(2)}t_{ij}^{(1)}-\theta_{-i,j}t_{j,-i}^{(1)}t_{-j,i}^{(2)}+\theta_{j,-i}t_{i,-j}^{(2)}t_{-i,j}^{(1)}+\theta_{-j,i}t_{i,-j}^{(1)}t_{-i,j}^{(2)}-\theta_{i,-j}t_{j,-i}^{(2)}t_{-j,i}^{(1)}. \label{J->R.3}
\end{equation}
The first term in \eqref{J->R.2} can be computed directly using the defining relation \eqref{RTT:BCD} and is equal to $\big[t_{ii}^{(2)},t_{jj}^{(2)}\big]=t_{ji}^{(1)}t_{ij}^{(2)}-t_{ji}^{(2)}t_{ij}^{(1)}$. Combining this with \eqref{J->R.3}, we can rewrite the first three terms of \eqref{J->R.2} as follows:
\begin{equation}\label{J->R.4}
 4\Big[t_{ii}^{(2)},t_{jj}^{(2)}\Big]+2\msum_k \Big(\Big[t_{jk}^{(1)}t_{kj}^{(1)},t_{ii}^{(2)}\Big]-\big[t_{ik}^{(1)}t_{ki}^{(1)},t_{jj}^{(2)}\Big]\Big)
 =2\left(-\theta_{-i,j}t_{j,-i}^{(1)}t_{-j,i}^{(2)}+\theta_{j,-i}t_{i,-j}^{(2)}t_{-i,j}^{(1)}+\theta_{-j,i}t_{i,-j}^{(1)}t_{-i,j}^{(2)}-\theta_{i,-j}t_{j,-i}^{(2)}t_{-j,i}^{(1)}\right). 
\end{equation}
Consider the second term on the right-hand side of the above equality. Employing \eqref{t^2:tr}, we find that it can be written~as 
\begin{equation*}
 \theta_{j,-i}\,t_{i,-j}^{(2)}t_{-i,j}^{(1)}=-t_{j,-i}^{(2)}t_{-i,j}^{(1)}-\theta_{j,-i}\ka\, t_{i,-j}^{(1)}t_{-i,j}^{(1)}+\theta_{j,-i}\msum_at_{ia}^{(1)}t_{a,-j}^{(1)}t_{-i,j}^{(1)}
\end{equation*}
Similarly, the third term can be expressed as 
\begin{equation*}
 \theta_{-j,i}\,t_{i,-j}^{(1)}t_{-i,j}^{(2)}=-t_{j,-i}^{(1)}t_{-i,j}^{(2)}=\theta_{-i,j}t_{j,-i}^{(1)}t_{-j,i}^{(2)}+\ka\,t_{j,-i}^{(1)}t_{-i,j}^{(1)}-t_{j,-i}^{(1)}\msum_at_{-i,a}^{(1)}t_{aj}^{(1)}. 
\end{equation*}
Substituting these new expressions back into \eqref{J->R.4} and using the identity $\sum_a[t_{-i,a}^{(1)},t_{aj}^{(1)}]=2\ka\, t_{-i,j}^{(1)}$, we arrive at the relation 
\begin{align*}
4\Big[t_{ii}^{(2)},t_{jj}^{(2)}\Big]+2\msum_k \Big(\Big[t_{jk}^{(1)}t_{kj}^{(1)},t_{ii}^{(2)}\Big]-\big[t_{ik}^{(1)}t_{ki}^{(1)},t_{jj}^{(2)}\Big]\Big)
= {} &2\,\bigg(2\ka t_{j,-i}^{(1)}t_{-i,j}^{(1)}-\msum_a \Big(t_{ia}^{(1)}t_{a,-j}^{(1)}t_{-j,i}^{(1)}+t_{j,-i}^{(1)}t_{-i,a}^{(1)}t_{aj}^{(1)} \Big)\bigg)\\
= {} &2\msum_a \Big(-t_{j,-i}^{(1)}t_{a,j}^{(1)}t_{-i,a}^{(1)}+t_{a,-i}^{(1)}t_{j,a}^{(1)}t_{-i,j}^{(1)} \Big).
\end{align*}
A similar calculation shows that
\begin{equation*}
 2\msum_{a} \Big(-t_{j,-i}^{(1)}t_{a,j}^{(1)}t_{-i,a}^{(1)}+t_{a,-i}^{(1)}t_{j,a}^{(1)}t_{-i,j}^{(1)}\Big) = 2\msum_{a}\Big(t_{j,a}^{(1)}t_{a,-i}^{(1)}t_{-i,j}^{(1)}-t_{j,-i}^{(1)}t_{-i,a}^{(1)}t_{a,j}^{(1)}\Big).
\end{equation*}
From this, together with \eqref{J->R.2}, we can conclude that \eqref{ter-chk} is satisfied for all $1\leq i \neq j\leq n$. 
This proves that $\Phi_{J,R}$ is a homomorphism of algebras.

\medskip 

\noindent \textit{Step 2:} $\Phi_{J,R}$ is a morphism of coalgebras.

\medskip 
For the remainder of the proof, we will write $\eps_R,\Delta_R$ and $S_R$ for the counit, coproduct, and antipode, respectively, of the Hopf algebra $Y_\zeta^R(\mfg_N)$. 

Let us start by showing $\Phi_{J,R}$ is a morphism of coalgebras. It is immediate that $\eps_R\circ \Phi_{J,R}=\eps$. To verify that 
$\Delta_R \circ \Phi_{J,R}=(\Phi_{J,R}\ot \Phi_{J,R})\circ \Delta$, we actually just need to check that both sides are equal when applied to $F_{kl}$ and 
$J(F_{ii})$ for all ${-}n\leq k,l\leq n$ and $1\leq i\leq n$, which are generators of $Y_\zeta(\mfg_N)$ and we already know $\Phi_{J,R}$ is a algebra morphism. Since $\iota:\mf{Ug}_N\into Y_\zeta^R(\mfg_N)$, $F_{ij}\mapsto t_{ij}^{(1)}$ is an embedding of Hopf algebras, we just need to verify that
\begin{equation}
 \Delta_R(\Phi_{J,R}(J(F_{ii})))=(\Phi_{J,R}\ot \Phi_{J,R})(\Delta(J(F_{ii}))) \quad \text{ for all }\; 1\leq i\leq n. \label{J->R.co1}
\end{equation}
We have 
\begin{equation}
 (\Phi_{J,R}\ot \Phi_{J,R})(\Delta(J(F_{ii})))=(\Phi_{J,R}\ot \Phi_{J,R})(\square(J(F_{ii}))+\tfrac{\zeta}{2}[F_{ii} \ot 1,\Omega])=
 \square\left(\zeta t_{ii}^{(2)}-\tfrac{\zeta}{2}\msum_{k} t_{ik}^{(1)}t_{ki}^{(1)}\right)+\tfrac{\zeta}{2}[t_{ii}^{(1)} \ot 1,\Omega_R],\label{J->R.co2}
 \end{equation}
where $\Omega_R=(\iota\ot \iota)(\Omega)$ and we recall that, given any vector space $W$, $\square$ is the linear map given by $\square(w)=w\ot 1+1\ot w$ for all $w\in W$. 

Consider now the left-hand side of the equality \eqref{J->R.co1}.  Since $\Delta_R(t_{kl}(u))=\sum_at_{ka}(u)\ot t_{al}(u)$, we have $\Delta_R(t_{kl}^{(2)})=\square(t_{kl}^{(2)})+\sum_a t_{ka}^{(1)}\ot t_{al}^{(1)}$. Therefore, 
\begin{align*}
  \Delta_R(\Phi_{J,R}(J(F_{ii})))= \Delta_R\left( \zeta t_{ii}^{(2)}-\tfrac{\zeta}{2}\msum_{k} t_{ik}^{(1)}t_{ki}^{(1)}\right)= {} &\square(\zeta t_{ii}^{(2)})+\zeta\msum_{a} t_{ia}^{(1)}\ot t_{ai}^{(1)}-\tfrac{\zeta}{2}\msum_{a} \square(t_{ia}^{(1)})\square(t_{ai}^{(1)})\\
  = {} &\square(\zeta t_{ii}^{(2)}-\zeta \msum_{a} t_{ia}^{(1)}t_{ai}^{(1)})+\tfrac{\zeta}{2}\msum_{a} (t_{ia}^{(1)}\ot t_{ai}^{(1)}-t_{ai}^{(1)}\ot t_{ia}^{(1)}).
\end{align*}
In order for this to be equal to \eqref{J->R.co2}, we need to establish that $[t_{ii}^{(1)} \ot 1,\Omega_R]=\sum_{a} (t_{ia}^{(1)}\ot t_{ai}^{(1)}-t_{ai}^{(1)}\ot t_{ia}^{(1)})$, which can be computed directly. This completes the proof that \eqref{J->R.co1} is satisfied, and thus that $\Phi_{J,R}$ is a morphism of coalgebras. 

\medskip 

\noindent \textit{Step 3:} $\Phi_{J,R}$ is an isomorphism of Hopf algebras. 

\medskip 
All that is left to show is that $\Phi_{J,R}$ is a bijection because an isomorphism of bialgebras must be an isomorphism of Hopf algebras. (This is a consequence of the uniqueness of the antipode.) This follows from the same type of argument as used in the proofs of Theorems \ref{T:Ycr(g)-} and 
\ref{T:YR-iso}: $\Phi_{J,R}$ is a filtered morphism and, after identifying both $\gr Y_\zeta(\mfg_N)$ and $\gr Y_\zeta^R(\mfg_N)$ with $\mf{Ug}_N[s]$, $\gr \Phi_{J,R}$ is just the identity map.
\end{proof}


\section{An application to the representation theory of the Yangian of \texorpdfstring{$\mfg_N$}{}}\label{sec:rep}


In this section we set $\zeta=1$, and write $Y^R(\mfg_N)$, $Y(\mfg_N)$ and $Y^{\mr{cr}}(\mfg_N)$ for $Y^R_{\zeta=1}(\mfg_N)$, $Y_{\zeta=1}(\mfg_N)$ and $Y_{\zeta=1}^{\mr{cr}}(\mfg_N)$, respectively. Let $d_i=1$ for $1\le i \le n-1 $, and set $d_0=1$ if $\mfg_N=\mfso_{2n}$, $d_0=2$ if $\mfg_N=\mfsp_{2n}$ and $d_0=\tfrac{1}{2}$ if $\mfg_N=\mfso_{2n+1}$, {\it i.e.}~as in Corollary~\ref{C:J(X)-b}. 
 
Let $\Phi_{\mr{cr},R}: Y^{\mr{cr}}(\mfg_N)\to Y^R(\mfg_N)$ be the Hopf algebra isomorphism obtained by composing $\Phi_{J,R}$ with the isomorphism 
$\Phi_{\mr{cr},J}: Y^{\mr{cr}}(\mfg_N)\to Y(\mfg_N)$ of Theorem \ref{T:Ycr(g)-}. The goal of this section is to prove that $\Phi_{\mr{cr},R}$ is compatible with the two definitions for highest weight module which have appeared independently for $Y^{\mr{cr}}(\mfg_N)$ and $Y^R(\mfg_N)$ in \cite{Dr3} and  \cite{AMR}, respectively, and also to show that $\Phi_{\mr{cr},R}$ induces an explicit equivalence between the two classification theorems for finite-dimensional irreducible modules proven in those two papers. Both of these results constitute Theorem \ref{T:cr-R}.

Let us begin by recalling the  classification of finite-dimensional irreducible modules obtained in \cite{Dr3} (for $Y^{\mr{cr}}(\mfg_N)$) and \cite{AMR} (for $Y^R(\mfg_N)$). 

A $Y^{\mr{cr}}(\mfg_N)$-module $V$ is a highest weight module if it is generated by a vector $\xi$ such that $x_{ir}^+\xi=0$ and $h_{ir}\xi=\lambda_i^r\xi$ for all $i\in \mcI$ and $r\geq 0$, where $(\lambda_i^r)_{i\in \mcI,r\geq 0}$ is a family of complex numbers which is called the highest weight of $V$. Such a vector $\xi$ is called a highest weight vector. By Theorem 2 of \cite{Dr3},  every finite-dimensional irreducible module is a highest weight module, and moreover  the isomorphism classes of these modules are in one-to-one correspondence with tuples of monic polynomials $(Q_i(u))_{i\in \mcI}$, called \textit{Drinfeld polynomials}. The relation between the highest weight $(\lambda_i^r)_{i\in \mcI,r\geq 0}$ of a finite-dimensional irreducible module $V$ and the corresponding tuple $(Q_i(u))_{i\in \mcI}$ is provided by
\begin{equation}
 1+\msum_{r=0}^\infty \lambda_i^r u^{-r-1}=\frac{Q_i(u+d_i)}{Q_i(u)} \quad \text{ for all }\; i\in \mcI. \label{Dr-class}
\end{equation}
Conversely, a $Y^R(\mfg_N)$-module $V$ is a highest weight module if it is generated by a vector $\xi$ such that $t_{kl}(u)\,\xi=0$ for all ${-}n\leq k<l\leq n$ and 
$t_{kk}(u)\,\xi=\lambda_k(u)\, \xi$ for all ${-}n\leq k\leq n$, where $\lambda_k(u)\in 1+u^{-1}\C[[u^{-1}]]$ for each $k$. The vector $\xi$ is called the highest weight vector, 
and the tuple $\lambda(u)=(\lambda_k(u))_{{-}n\leq k\leq n}$ is called the highest weight. Theorem 5.1 of \cite{AMR} implies that every finite-dimensional irreducible $Y^R(\mfg_N)$-module is of highest weight type, and by Theorem~5.16 and Corollary 5.19 of \cite{AMR} the isomorphism classes of such modules are parameterized by tuples of monic polynomials 
$(P_k(u))_{1\leq k\leq n}$, which are also called Drinfeld polynomials. The relationship between the highest weight $\lambda(u)$ and the associated tuple $(P_k(u))_{1\leq k\leq n}$ is given by 
\begin{gather*}
 \frac{\lambda_{k-1}(u)}{\lambda_k(u)}=\frac{P_k(u+1)}{P_k(u)} \; \text{ for all }\; 2\leq k\leq n \qu\text{ and}\qu \frac{\lambda_{k_0}(u)}{\lambda_{k_1}(u)}=\frac{P_1(u+d_0)}{P_1(u)},
\end{gather*}
where $(k_0,k_1)=(0,1)$ if $\mfg_N=\mfso_{2n+1}$, $(k_0,k_1)=(-1,1)$ if $\mfg_N=\mfsp_{2n}$, and $(k_0,k_1)=(-1,2)$ if $\mfg_N=\mfso_{2n}$.

In Definition \ref{Funrep} we gave the definition of the fundamental $Y^{\mr{cr}}(\mfg_N)$-module $V(i;a)$, where $i\in \mcI$ and $a\in \C$. 
We also have the notion of fundamental representation for $Y^R(\mfg_N)$: 
\begin{definition}
Let $a\in \C$ and $1\leq k\leq n$. We denote by $V^R(k;a)$ the unique, up to isomorphism, finite-dimensional irreducible $Y^R(\mfg_N)$ -module with Drinfeld polynomials 
$P_1(u),\ldots,P_n(u)$ given by $P_j(u)=1$ if $j\neq i$ and $P_i(u)=u-a$. 
\end{definition}
If $\xi\in V^R(k;a)$ denotes the highest weight vector, then the $\mfg_N$-module generated by $\xi$ is isomorphic to the fundamental module $V(\omega_{k-1})$.

Given a $Y^R(\mfg_N)$-module $V$ with corresponding morphism $\varrho: Y^R(\mfg_N)\to \End V$, let $\Phi_{\mr{cr},R}^*(V)$ denote the $Y^{\mr{cr}}(\mfg_N)$-module realized in the vector space $V$ with module structure given by $\varrho\circ \Phi_{\mr{cr},R}$. For each $b\in \C$, we denote by $\tau_b^R$ the automorphism of $Y^R(\mfg_N)$ 
given by $T(u)\mapsto T(u-b)$. We also denote by $\tau_b$ the automorphism of $Y^{\mr{cr}}(\mfg_N)$ inherited from the automorphism of $Y(\mfg_N)$ of the same name via the isomorphism $\Phi_{\mr{cr},J}:Y^{\mr{cr}}(\mfg_N)\iso Y(\mfg_N)$: see Proposition~12.1.5 of \cite{ChPr2} for its definition on the generators $h_{ir}$ and $x_{ir}^\pm$. 
The observations that $\tau_{0,b}(\mcR(-u))=\mcR(-u+b)$ (by \eqref{RR=I}) and $\varkappa\circ \tau_b=\tau_b\circ \varkappa$ (see Proposition \eqref{P:vark}), together with Proposition 2.14 of \cite{ChPr1} and the definition of $\tau_b^R$ imply the following lemma.

\begin{lemma}\label{tau}
 For each $b\in \C$ we have $\tau_b^R\circ \Phi_{\mr{cr},R}=\Phi_{\mr{cr},R}\circ \tau_b$. In particular, if $V$ is a finite-dimensional irreducible  $Y^R(\mfg_N)$-module 
 with Drinfeld tuple $(P_i(u))_{i=1}^n$, and $\Phi_{\mr{cr},R}^*(V)$ is attached to the tuple $(Q_i(u))_{i\in \mcI}$, then $V^{\tau_b}$ corresponds to the tuple 
 $(P_i(u-b))_{i=1}^n$ and $\Phi_{\mr{cr},R}^*(V^{\tau_b})$ corresponds to $(Q_i(u-b))_{i\in \mcI}$. 
\end{lemma}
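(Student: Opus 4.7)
The proof can proceed essentially along the lines sketched in the paragraph immediately preceding the lemma statement, so the task is largely to flesh out three transport-of-structure arguments and a routine computation of Drinfeld polynomials. I would break it into two parts corresponding to the two sentences of the lemma.

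For the first identity $\tau_b^R \circ \Phi_{\mr{cr},R} = \Phi_{\mr{cr},R} \circ \tau_b$, I plan to factor $\Phi_{\mr{cr},R}$ as $\Phi_{J,R}\circ \Phi_{\mr{cr},J}$ and reduce to the identity $\tau_b^R \circ \Phi_{J,R} = \Phi_{J,R} \circ \tau_b$ on $Y(\mfg_N)$, since the $\tau_b$ on $Y^{\mr{cr}}(\mfg_N)$ is defined by transport through $\Phi_{\mr{cr},J}$. The latter identity is equivalent, via $\Phi_{J,R}=\Phi_{R,J}^{-1}=\varphi_{R,J}^{-1}\circ\varkappa$, to proving $\tau_b \circ \varphi_{R,J} = \varphi_{R,J}\circ \tau_b^R$ on $Y^R(\mfg_N)$, after using $\varkappa\circ\tau_b=\tau_b\circ\varkappa$ from Lemma \ref{L:ext-aut} (applied to the Chevalley involution, as in Proposition \ref{P:vark}). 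To check the remaining identity it suffices to evaluate both sides on $T(u)$: applying $\tau_b$ gives
\[
\tau_b(\varphi_{R,J}(T(u)))=(\rho\ot \tau_b)(\mcR(-u))=(\rho\ot \id)(\tau_{0,b}(\mcR(-u)))=(\rho\ot \id)(\mcR(-u+b)),
\]
where the last equality is \eqref{RR=I}; this coincides with $\varphi_{R,J}(T(u-b))=\varphi_{R,J}(\tau_b^R(T(u)))$ by the definition of $\tau_b^R$.

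For the second sentence, I would translate highest-weight data directly. If $\xi$ is a highest-weight vector of $V$ with $t_{kk}(u)\xi=\lambda_k(u)\xi$, then in $V^{\tau_b^R}$ the element $\xi$ remains highest weight and $t_{kk}(u)$ now acts by $\lambda_k(u-b)$, so all ratios $\lambda_{k-1}(u)/\lambda_k(u)$ are pulled back by $u\mapsto u-b$; by the formulas recalled just before Definition \ref{Funrep}, the Drinfeld tuple of $V^{\tau_b^R}$ is $(P_i(u-b))_{i=1}^n$. The analogous computation for the $Y^{\mr{cr}}(\mfg_N)$-module $\Phi_{\mr{cr},R}^*(V)^{\tau_b}$—whose highest weights transform as $\lambda_i^r\mapsto $ the coefficients of the expansion of $\lambda_i(u-b)$—gives Drinfeld tuple $(Q_i(u-b))_{i\in\mcI}$ by Proposition 2.14 of \cite{ChPr1}, and the first part of the lemma lets us identify this $Y^{\mr{cr}}(\mfg_N)$-module with $\Phi_{\mr{cr},R}^*(V^{\tau_b^R})$, which is denoted $\Phi_{\mr{cr},R}^*(V^{\tau_b})$ in the statement (the $\tau_b$ there being the $Y^R$ version; notation is slightly abusive but unambiguous).

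None of the steps should present a serious obstacle: the one place to be careful is bookkeeping of which $\tau_b$ lives on which algebra, since three different automorphisms (on $Y^R$, $Y$, and $Y^{\mr{cr}}$) are all written $\tau_b$, and one must verify the compatibility $\varkappa\circ \tau_b=\tau_b\circ \varkappa$ truly transports through the three isomorphisms in the correct direction. Once these identifications are settled the computation reduces to the single functional equation $\tau_{0,b}(\mcR(-u))=\mcR(-u+b)$ and a direct inspection of the defining formulas relating $\lambda_i(u)$ to the Drinfeld polynomials.
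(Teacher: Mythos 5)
Your proposal is correct and follows exactly the route the paper intends: the paper justifies this lemma in one sentence by citing $\tau_{0,b}(\mcR(-u))=\mcR(-u+b)$, the commutation $\varkappa\circ\tau_b=\tau_b\circ\varkappa$, Proposition 2.14 of \cite{ChPr1}, and the definition of $\tau_b^R$, and your argument is precisely the fleshed-out version of that sketch. The bookkeeping of the three automorphisms named $\tau_b$ and the direction of the transport through $\Phi_{J,R}=\varphi_{R,J}^{-1}\circ\varkappa$ both check out.
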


In order to prove Theorem \ref{T:cr-R} stated below, we will need to know more explicit information about certain fundamental representations of $Y^R(\mfg_N)$. 
The majority of the following lemma is a reformulation of Lemma 5.18 of \cite{AMR}: 

\begin{lemma}\label{spin}
Let $i=0$ if $\mfg_N=\mfso_{2n+1}$, and $i=0$ or $1$ if $\mfg_N=\mfso_{2n}$, and let $f(u)\in1+u^{-1}\C[[u^{-1}]]$ be the unique series such that 
\begin{equation}
 f(u)f(u+\ka)=\frac{4u(u+\ka)}{4u(u+\ka)-2\ka-1}. \label{f(u)}
\end{equation}
Then the fundamental representation of $V(\omega_i)$ of $\mfso_N$ can be extended to the Yangian
$Y^R(\mfg_N)$ by assigning $t_{kl}(u)\mapsto f(u)(\delta_{kl}+F_{kl}u^{-1})$ for all ${-}n\leq k,l\leq n$, and the resulting module is isomorphic to $V^R(i+1;\frac{1}{2})$. 

In addition, $\Phi_{J,R}(J(F_{kl}))$ operates on this module as $-\mfrac{\ka}{2}F_{kl}$ for all ${-}n\leq k,l\leq n$.
\end{lemma}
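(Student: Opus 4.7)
The first two assertions of the lemma---namely the existence of the $Y^R(\mfg_N)$-extension and its identification with $V^R(i+1;\tfrac{1}{2})$---are essentially \cite[Lemma 5.18]{AMR}, and I would invoke that reference. As a sanity check, one can verify the identification directly: the $\mfso_N$-highest weight vector $\xi\in V(\omega_i)$ is killed by all positive root vectors $F_{kl}$ with $k<l$, so it is also a $Y^R$-highest weight vector, and $t_{kk}(u)\,\xi=f(u)(1+\omega_i(F_{kk})u^{-1})\xi$. Substituting the explicit eigenvalues $\omega_0(F_{kk})=-\tfrac{1}{2}$ for $1\leq k\leq n$ (together with $\omega_1(F_{11})=\tfrac{1}{2}$ in the extra $\mfso_{2n}$ case $i=1$) into the ratio formulas recalled just before the lemma recovers $P_{i+1}(u)=u-\tfrac{1}{2}$ and $P_j(u)=1$ for $j\neq i+1$.

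The novel content is the action of $\Phi_{J,R}(J(F_{kl}))$. By Proposition \ref{T:J->R} with $\zeta=1$, $\Phi_{J,R}(J(F_{kl}))=t_{kl}^{(2)}-\tfrac{1}{2}\sum_j t_{kj}^{(1)}t_{jl}^{(1)}$. Writing $f(u)=1+f_1u^{-1}+f_2u^{-2}+\cdots$ and expanding the right-hand side of \eqref{f(u)} as $1+\tfrac{2\ka+1}{4}u^{-2}+O(u^{-3})$ forces $f_1=0$ and $f_2=\tfrac{2\ka+1}{8}$. Consequently $t_{kl}^{(1)}$ acts as $F_{kl}$ on $V(\omega_i)$ while $t_{kl}^{(2)}$ acts as $\tfrac{2\ka+1}{8}\delta_{kl}$, and the claim $\Phi_{J,R}(J(F_{kl}))=-\tfrac{\ka}{2}F_{kl}$ becomes equivalent to the identity
\begin{equation*}
\textstyle \sum_{j} F_{kj}F_{jl}=\ka\,F_{kl}+\tfrac{2\ka+1}{4}\delta_{kl}
\end{equation*}
in $\End(V(\omega_i))$ for all $-n\leq k,l\leq n$.

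To establish this identity, I would decompose $T_{kl}:=\sum_j F_{kj}F_{jl}$ according to the involution $(k,l)\mapsto(-l,-k)$ imposed by the orthogonal form. A routine manipulation using \eqref{FijFkl} gives $T_{kl}-T_{-l,-k}=2\ka F_{kl}$, so writing $T_{kl}=\ka F_{kl}+S_{kl}$ with $S_{kl}:=\tfrac{1}{2}\sum_j\{F_{kj},F_{jl}\}$, the residual tensor $S$ satisfies $S_{kl}=S_{-l,-k}$. A second bracket calculation shows that $T_{kl}$---and hence $S_{kl}$---transforms under the adjoint $\mfso_N$-action by the very same formula as $F_{kl}$, so the assignment $e_k\otimes e_l\mapsto S_{kl}|_{V(\omega_i)}$ defines an $\mfso_N$-equivariant linear map $\Sym^2\C^N\to\End V(\omega_i)$. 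Schur's lemma splits this map into a trace-like component proportional to $\delta_{kl}$ and a component factoring through the irreducible traceless-symmetric piece $V'\subset\Sym^2\C^N$.

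The main obstacle is showing that the $V'$-component vanishes on $V(\omega_i)$, equivalently that $\Hom_{\mfso_N}(V',\,V(\omega_i)\otimes V(\omega_i)^*)=0$. For the spin representation of $B_n$ and the half-spin representations of $D_n$, this follows from the classical Clifford decomposition $V(\omega_i)\otimes V(\omega_i)^*\cong\bigoplus_k\Lambda^k\C^N$ (restricted to even or odd $k$ in the half-spin cases), none of whose irreducible $\mfso_N$-constituents is isomorphic to $V'$. The scalar $c$ in $S_{kl}=c\,\delta_{kl}$ is then pinned down by taking the trace: summing $T_{kk}=\ka F_{kk}+c$ over $k$ yields $2\,C|_{V(\omega_i)}=cN$, and a standard calculation of the Casimir eigenvalue from the weight $\omega_i$ (and the formula for $\rho$) gives $C|_{V(\omega_i)}=N(2\ka+1)/8$, so $c=\tfrac{2\ka+1}{4}$ as required.
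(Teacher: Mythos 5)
Your proposal is correct and its overall skeleton matches the paper's: both reduce everything to the expansion $f(u)=1+\bigl(\tfrac{\ka}{4}+\tfrac{1}{8}\bigr)u^{-2}+O(u^{-3})$ together with the operator identity $\sum_j F_{kj}F_{jl}=\ka F_{kl}+\bigl(\tfrac{\ka}{2}+\tfrac{1}{4}\bigr)\delta_{kl}$ on $V(\omega_i)$, from which $\Phi_{J,R}(J(F_{kl}))=-\tfrac{\ka}{2}F_{kl}$ drops out by cancellation of the scalar terms. The genuine difference is in how that identity is obtained: the paper simply imports it as (5.44) of \cite{AMR}, whereas you prove it from scratch by splitting $T_{kl}=\sum_jF_{kj}F_{jl}$ into $\ka F_{kl}$ plus a "symmetric" part $S_{kl}$, checking that $T$ transforms in the adjoint pattern, and killing the traceless-symmetric component of $S$ via $\Hom_{\mfso_N}(V',V(\omega_i)\ot V(\omega_i)^*)=0$, which you extract from the Clifford decomposition $V(\omega_i)\ot V(\omega_i)^*\cong\bigoplus_k\Lambda^k\C^N$; the scalar is then fixed by a Casimir/trace computation. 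This is a self-contained and conceptually cleaner route to the key identity (I checked the commutator computation $\sum_j[F_{kj},F_{jl}]=(N-2)F_{kl}=2\ka F_{kl}$ and the trace normalization $2C|_{V(\omega_i)}=cN$ with $C|_{V(\omega_i)}=N(2\ka+1)/8$; both are right), at the cost of invoking the representation theory of spin modules. One small gap worth flagging: for the first assertion you lean on \cite[Lemma 5.18]{AMR}, but that lemma concerns the \emph{extended} Yangian $X(\mfso_N)$ with the untwisted assignment $\wt t_{kl}(u)\mapsto\delta_{kl}+F_{kl}u^{-1}$; to land in $Y^R(\mfg_N)$ one must still verify the unitarity relation \eqref{TT=TT=I}, and this is precisely where the specific normalization \eqref{f(u)} enters. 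The paper does this by computing the central series $z(u)$ (again via the $F^2$ identity) and dividing by the square-root series $y(u)$. Your Drinfeld-polynomial "sanity check" does not substitute for this step, since the $f(u)$ factors cancel in the eigenvalue ratios; however, all the ingredients are already in your argument — given your $F^2$ identity, $f(u)f(u+\ka)\bigl(I+\tfrac{\ka F-F^2}{u(u+\ka)}\bigr)=I$ is exactly \eqref{f(u)} — so this is an omission of assembly rather than of substance.
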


\begin{proof}
In Lemma 5.18 of \cite{AMR} it was shown that $V(\omega_i)$ extends to a representation $V$ of the extended Yangian $X(\mfso_N)$  (defined in the same way as $Y(\mfso_N)$ only without \eqref{TT=TT=I}) by the rule $\wt t_{kl}(u)\mapsto \delta_{kl}+F_{kl}u^{-1}$, and moreover that the resulting module has the Drinfeld tuple $(u-1/2,1,\ldots,1)$ if $i=0$ and $(1,u-1/2,1,\ldots,1)$ if $i=1$. Here the notation $\wt t_{kl}(u)$ is used to denote the generating series of the extended Yangian $X(\mfso_N)$ (these were denoted by $t_{kl}(u)$ in \cite{AMR}). Therefore, to complete the proof of the first part of the lemma we just need to show that the restriction of this $X(\mfso_N)$-module to the subalgebra $Y(\mfso_N)$ is indeed given by the formula  $t_{kl}(u)\mapsto f(u)(\delta_{kl}+F_{kl}u^{-1})$ for all $k$ and $l$. Let $F=\sum_{k,l}E_{kl}\ot F_{kl}\in \End(\C^N)\ot \End(V(\omega_i))$. Then, since $F^t=-F$, we have the equality of operators
 \begin{equation*}
  \wt T(u)\, \wt T^t(u+\ka)=z(u)\,I= I+\frac{F^t}{u+\ka} +\frac{F}{u} +\frac{FF^t}{u\,(u+\ka)}= 1+\frac{\ka F-F^2}{u\,(u+\ka)}, 
 \end{equation*}
where $z(u)$ is the distinguished central series of $X(\mfso_N)$ defined in (2.26) of \cite{AMR}. By (5.44) of \cite{AMR}, 
\begin{equation}
 F^2=\left(\mfrac{\ka}{2}+\mfrac{1}{4} \right)I+\ka F \label{spin.2},
\end{equation}
and hence $z(u)$ operates as multiplication by  $1-\left(\tfrac{\ka}{2}+\tfrac{1}{4} \right)u^{-1}(u+\ka)^{-1}$ in $V$. Now $T(u)$ and $\wt T(u)$ are related by 
$T(u)=y(u)^{-1}\wt T(u)$, where $y(u)$ is the central series with constant term uniquely determined by $y(u)\,y(u+\ka)=z(u)$. It follows by definition of $f(u)$ given in \eqref{f(u)} that  $y(u)^{-1}$ must operate as multiplication by $f(u)$ in $V$, and hence that $t_{kl}(u)$ acts as $f(u)\,(\delta_{kl}+F_{kl}u^{-1})$ for all ${-}n\leq k,l\leq n$. 

Now let us show that $\Phi_{J,R}(J(F_{kl}))$ operates as $-\frac{\ka}{2}F_{kl}$ in $V$. From \eqref{f(u)} we can deduce that $f(u)=1+\left(\tfrac{\ka}{4}+\tfrac{1}{8}\right)u^{-2}+ O(u^{-3})$. Therefore the coefficient of $u^{-2}$ in $f(u)(\delta_{kl}+F_{kl}u^{-1})$ is $\left(\tfrac{\ka}{4}+\tfrac{1}{8}\right)\delta_{kl}$, {\it i.e.}~$t_{kl}^{(2)}$ operates as multiplication by $\left(\tfrac{\ka}{4}+\tfrac{1}{8}\right)\delta_{kl}$ in $V$. Combining this with \eqref{spin.2} and \eqref{J->R}, we obtain the equality of operators
\begin{equation*}
 \Phi_{J,R}(J(F_{kl}))= t_{kl}^{(2)}-\mfrac{1}{2}\msum_{a} t_{ka}^{(1)}t_{al}^{(1)}=\left(\mfrac{\ka}{4}+\mfrac{1}{8}\right)\delta_{kl}-\left(\mfrac{\ka}{4}+\mfrac{1}{8}\right)\delta_{kl}-\mfrac{\ka}{2}F_{kl}=-\mfrac{\ka}{2}F_{kl} \; \text{ for all }\; {-}n\leq k,l\leq n. \qedhere
\end{equation*}
\end{proof}

Let $g(u)$ be the unique series in $1+u^{-1}\C[[u^{-1}]]$ such that $g(u)\,g(u+\ka)=\frac{u^2}{u^2-1}$. It was proven in part of the proof of Theorem 3.6 of \cite{AMR} (see (3.20) therein with $c=\ka$) that the natural representation $\C^N=V(\omega_{n-1})$ of $\mfg_N$ extends to a representation of $Y^R(\mfg_N)$ by the assignment 
\begin{equation}
 t_{ij}(u)\mapsto g(u)\left(\delta_{ij}+E_{ij}(u-\ka)^{-1}-\theta_{ij}E_{-j,-i}u^{-1}\right) \; \text{ for all }\; {-}n\leq i,j\leq n. \label{R-nat}
\end{equation}
Our choice of specializing the parameter $c$ to $\ka$ is motivated by the fact that with this choice, one can show that $\Phi_{J,R}(J(F_{ij}))$ operates as $0$ 
in $\C^N$, so  that $\Phi_{\mr{cr},R}^*(\C^N)$ coincides with the extension of the natural representation of $\mfg_N$ to $Y^{\mr{cr}}(\mfg_N)$ constructed in Proposition \ref{P:Y(g)-rep}.

For each $a\in \C$, set $(\C^N)_a={\tau_a^R}^*(\C^N)$. Then, for any $1\leq m\leq n$, we can consider the $Y^R(\mfg_N)$-module $\C^N\ot (\C^N)_{-1}\ot \cdots \ot (\C^N)_{-m+1}$. As in \cite{AMR}, we let $\xi_m$ be the element of this module defined by 
\begin{equation*}
 \xi_m=\msum_{\si\in \mf{S}_m}\mr{sign}(\si)\, e_{-n-1+\si(1)}\ot \cdots \ot e_{-n-1+\sigma(m)}.
\end{equation*}
We let $\C^{N,m}$ denote the submodule of $\C^N\ot (\C^N)_{-1}\ot \cdots \ot (\C^N)_{-m+1}$
generated by $\xi_m$.

The last lemma we will need is also a reformulation of a result from \cite{AMR}, where it played a crucial role in proving the main classification theorem \cite[Theorem 5.16]{AMR}.
\begin{lemma}\label{L:CN,m}
The $Y^R(\mfg_N)$-module $\C^{N,m}$ is a finite-dimensional highest weight module with the highest weight vector $\xi_m$. Its highest weight $\lambda(u)$ is determined by the relations
\begin{equation}\label{R-CN,m}
 g_m(u)^{-1}\lambda_i(u)=\begin{cases}
               \left(\mfrac{u-\ka+m}{u-\ka+m-1} \right) \; &\text{ if }\; {-}n\leq i\leq -n+m-1,\\
               1 \; &\text{ if }\; {-}n+m\leq i\leq n-m,\\
               \left(\mfrac{u-1}{u} \right)\; &\text{ if }\; n-m+1\leq i\leq n,
              \end{cases}
\end{equation}
where $g_m(u)=g(u)\,g(u+1)\cdots g(u+m-1)$. 
\end{lemma}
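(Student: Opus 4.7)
The plan is to give a direct computational proof using the explicit action \eqref{R-nat}, the iterated coproduct, and the shift by twisting. Concretely, on the twisted factor $(\C^N)_{-(p-1)}$ the series $t_{ij}(u)$ acts as
\[
g(u+p-1)\,\bigl[\delta_{ij}\,\mathrm{id} + E_{ij}(u+p-1-\ka)^{-1} - \theta_{ij}E_{-j,-i}(u+p-1)^{-1}\bigr],
\]
while the $(m{-}1)$-fold iterated coproduct gives
\[
\Delta^{(m-1)}(t_{ij}(u))=\msum_{a_1,\ldots,a_{m-1}} t_{i,a_1}(u)\ot t_{a_1,a_2}(u)\ot\cdots\ot t_{a_{m-1},j}(u).
\]
Combining these, $t_{ij}(u)\xi_m$ is a sum indexed by walks $i=a_0,a_1,\ldots,a_{m-1},a_m=j$ together with a permutation $\si\in\mfS_m$; each step contributes one of the three summands above.

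The first task is to verify that the assignment \eqref{R-nat} does define a $Y^R(\mfg_N)$-module, which amounts to checking the defining identity $T(u)\,T^t(u+\ka)=I$ on $\C^N$; this is where the scalar $g(u)$ (and hence $g_m(u)$) enters as the unique solution in $1+u^{-1}\C[[u^{-1}]]$ of $g(u)g(u+\ka)=u^{2}/(u^{2}-1)$. The base case $m=1$ of the lemma is then immediate: applying $t_{ij}(u)$ to $e_{-n}$ via the three-term formula, the $E_{ij}/(u-\ka)$-contribution forces $j=-n$ and hence $i<-n$, while the $\theta_{ij}E_{-j,-i}/u$-contribution forces $i=n$ and hence $j>n$, so both vanish when $i<j$; the diagonal values reproduce \eqref{R-CN,m} for $m=1$.

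For general $m$, I would proceed by induction, using the factorization $\xi_m = \sum_{k=1}^m (-1)^{k-1}\, e_{-n-1+k}\ot \xi_{m-1}^{(k)}$ (where $\xi_{m-1}^{(k)}$ is the antisymmetrizer over positions $2,\ldots,m$ with the index $k$ omitted) together with $\Delta$ and the fact that $(\C^N)_{-1}\ot\cdots\ot(\C^N)_{-m+1}$ is the twist by $\tau^{R}_{-1}$ of the $(m{-}1)$-case. The vanishing $t_{ij}(u)\xi_m=0$ for $i<j$ then follows by combining the inductive hypothesis on the tail with the $m=1$ analysis on the head, and observing that any surviving cross-term from the coproduct either hits a basis vector already present in some $\xi_{m-1}^{(k')}$, forcing cancellation by antisymmetry, or requires $i$ and $j$ to lie on opposite extremes of $\{-n,\ldots,n\}$ in a way incompatible with $i<j$.

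For the diagonal weights the induction delivers the stated formula after sorting out the three regimes separately: (i) the constant walk $a_p\equiv k$ always survives and contributes the base factor $g_m(u)=\prod_{p=0}^{m-1} g(u+p)$; (ii) when $k\in\{-n,\ldots,-n+m-1\}$, the $E_{kk}/(u+p-1-\ka)$-contribution in the unique tensor slot where $e_k=e_{-n-1+s}$ appears telescopes to the factor $(u-\ka+m)/(u-\ka+m-1)$; (iii) when $k\in\{n-m+1,\ldots,n\}$, the $-\theta_{kk}E_{-k,-k}/(u+p-1)$-contribution similarly telescopes to $(u-1)/u$; for $-n+m\leq k\leq n-m$ neither $e_k$ nor $e_{-k}$ appears in $\xi_m$ so no correction arises. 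The main obstacle I anticipate is this last bookkeeping step: verifying that all decorated walks beyond the simple ones described above either cancel in antisymmetrization or combine to reproduce exactly the rational factors of \eqref{R-CN,m}. A cleaner alternative, should the direct induction become unwieldy, is to cite the parallel construction for the extended Yangian $X(\mfg_N)$ in \cite[\S5]{AMR} (where the analogous module is built using unnormalized generators $\wt t_{ij}(u)$) and then pass to $Y^R(\mfg_N)$ via $T(u)=y(u)^{-1}\wt T(u)$, noting that on the $m$-fold tensor product the normalizing series contributes precisely one factor $g(u+p-1)$ per slot, whose product is $g_m(u)$.
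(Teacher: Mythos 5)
Your ``cleaner alternative'' is in fact the paper's entire proof: the lemma is established there in one line by observing that, once the normalizing series $g(u)$ in \eqref{R-nat} is accounted for (one factor $g(u+p-1)$ per tensor slot, whose product is $g_m(u)$), the statement is a restatement of (5.39) and (5.40) of \cite{AMR} with $u$ replaced by $u-\ka$, the shift coming from the choice $c=\ka$ in \eqref{R-nat}. So the fallback you mention at the end is not merely an option --- it is the intended argument, and it is complete as stated. (Also, the fact that \eqref{R-nat} defines a $Y^R(\mfg_N)$-module is already established in the paragraph preceding the lemma, so it need not be reverified; and checking only $T(u)\,T^t(u+\ka)=I$ would in any case be insufficient, since the $RTT$ relation must be checked too.)

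Your primary route, the direct induction, is the right kind of computation --- it is essentially what \cite{AMR} carry out for the extended Yangian --- but as sketched it has two genuine gaps. First, the induction does not close as set up: after splitting off the head factor, the tails $\xi_{m-1}^{(k)}$ are antisymmetrizations over the index set $\{-n,\ldots,-n+m-1\}\setminus\{-n-1+k\}$, which for $k<m$ is not the index set of $\xi_{m-1}$, so the inductive hypothesis does not apply to them directly; one needs a stronger statement about antisymmetrized vectors over more general index sets, or a different organization of the computation. Second, the ``telescoping'' in regimes (ii) and (iii) is not a single-term phenomenon: the slot $p$ in which a given basis vector $e_{-n-1+s}$ sits depends on the permutation $\si$, so the single-decoration contributions $(u+p-1-\ka)^{-1}$ and $(u+p-1)^{-1}$ are $\si$-dependent, and walks of length at least two (e.g.\ $k\to j\to k$) also contribute; showing that all of these combine across $\mfS_m$ into exactly the two rational factors of \eqref{R-CN,m} is precisely the hard part, and it is the part you flag but do not carry out. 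Until that bookkeeping is done (or outsourced to \cite{AMR}), the primary route is not a proof.
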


\begin{proof}
 After taking into consideration the series $g(u)$ in the formula \eqref{R-nat}, this lemma is just a restatement of (5.39) and (5.40) of \cite{AMR} with $u$ replaced by $u-\ka$. 
\end{proof}

\begin{theorem}\label{T:cr-R}
 Suppose that $V$ is a highest weight $Y^R(\mfg_N)$-module with the highest weight vector $\xi$. Then  $\Phi_{\mr{cr},R}^*(V)$ is a highest weight $Y^\mr{cr}(\mfg_N)$-module with highest weight vector $\xi$. Moreover, if $V$ is a finite-dimensional irreducible module corresponding to the Drinfeld tuple $(P_1(u),\ldots,P_n(u))$, then $\Phi_{\mr{cr},R}^*(V)$ is the unique (up to isomorphism) $Y^{\mr{cr}}(\mfg_N)$-module corresponding to the Drinfeld tuple $(Q_0(u),\ldots,Q_{n-1}(u))$, where 
\begin{align}
  Q_i(u)= {} &P_{i+1}\left(u+\mfrac{n+\ka-i}{2}\right) \quad \text{ for all }\; 1\leq i\leq n-1, \label{Q1}\\
  Q_0(u)= {} &\begin{cases}
          P_1(u+\ka)  & \text{ if } \; \mfg_N=\mfso_{2n} \text{ or } \mfg_N=\mfsp_{2n},\\
          P_1\left(u+\ka+\tfrac{n}{2}\right)\qu & \text{ if } \; \mfg_N=\mfso_{2n+1}.
         \end{cases} \label{Q2}
 \end{align}
\end{theorem}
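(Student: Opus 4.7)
The plan is to verify Part~1 (preservation of the highest weight condition) first, and then reduce Part~2 (the explicit Drinfeld polynomial correspondence) to a case-by-case verification on fundamental representations.

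For Part~1, first observe that $\xi$ generates $\Phi_{\mr{cr},R}^*(V)$ as a $Y^{\mr{cr}}(\mfg_N)$-module since $\Phi_{\mr{cr},R}$ is a bijection. By Theorem~\ref{smallerset}, it suffices to establish that $x^+_{i0}\xi=0$, $h_{i0}\xi\in\C\xi$, and $\tilde h_{i1}\xi\in\C\xi$ for every $i\in\mcI$, since then the inductive formula $x^+_{i,r+1}=(\alpha_i,\alpha_i)^{-1}[\tilde h_{i1},x^+_{ir}]$ of Theorem~\ref{smallerset} immediately gives $x^+_{ir}\xi=0$ for all $r$. The claims for $x^+_{i0}$ and $h_{i0}$ are immediate from \eqref{x->I-1}--\eqref{x->I-4} combined with Proposition~\ref{P:vark}: each $x^+_{i0}$ maps under $\Phi_{\mr{cr},R}$ to a $t_{kl}^{(1)}$ with $k<l$, which annihilates $\xi$, while $h_{i0}$ maps to a linear combination of diagonal $t_{kk}^{(1)}$'s. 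For $\tilde h_{i1}\xi$, I would use Theorem~\ref{T:Ycr(g)-} and Proposition~\ref{T:J->R} to expand
\[
\Phi_{\mr{cr},R}(h_{i1}) = \msum_k c_{ik}\!\left(t_{kk}^{(2)}-\tfrac{1}{2}\msum_a t_{ka}^{(1)}t_{ak}^{(1)}\right) - v_i^R,
\]
where $h_i=\sum_k c_{ik}F_{kk}$ and $v_i^R$ denotes the image of $v_i$ under the $\mfg_N$-embedding. Each term acts as a scalar on $\xi$: $t_{kk}^{(2)}\xi$ is scalar by hypothesis; the sum $\sum_a t_{ka}^{(1)}t_{ak}^{(1)}\xi$ collapses, after splitting by $a<k$, $a=k$, $a>k$ and applying the commutators $[t_{ka}^{(1)},t_{ak}^{(1)}]=[F_{ka},F_{ak}]\in\mfh_N$; and $v_i^R\xi$ is scalar by a direct Cartan-weight computation.

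The main obstacle will be verifying $h_{ir}\xi\in\C\xi$ for $r\geq 2$, since a naive induction via $h_{ir}=[x^+_{ir},x^-_{i0}]$ turns out to be circular. To circumvent this, I intend to argue as follows. Using the defining relation \eqref{Ycr(g)-2} with $j=i$ and $s=0$, applied to $\xi$, together with the inductive hypothesis that $h_{is}\xi\in\C\xi$ and $x^+_{is}\xi=0$ for $s\leq r$, one deduces that $x^+_{i0}\,h_{i,r+1}\xi=0$, so $h_{i,r+1}\xi$ lies in the common kernel of all $x^+_{j0}$'s in the $\mfg_N$-weight space of $\xi$. A parallel computation using the other simple-root generators shows that $h_{i,r+1}\xi$ is in fact annihilated by every raising generator of the current presentation. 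To conclude the scalar property, I would reduce to the universal highest weight $Y^R$-module $M^R(\lambda(u))$, show that its space of $Y^R$-highest weight vectors is one-dimensional, and then use the universal property: since every highest weight $Y^R$-module $V$ is a quotient of $M^R(\lambda(u))$, the highest weight structure on $\Phi^*_{\mr{cr},R}(M^R(\lambda(u)))$ induces one on $\Phi^*_{\mr{cr},R}(V)$ with highest weight vector $\xi$.

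For Part~2, once Part~1 is established, $\Phi_{\mr{cr},R}^*(V)$ for irreducible finite-dimensional $V$ is a highest weight $Y^{\mr{cr}}$-module, hence carries some Drinfeld tuple $(Q_0(u),\ldots,Q_{n-1}(u))$. Since $\Phi_{\mr{cr},R}$ is a Hopf algebra isomorphism and every finite-dimensional irreducible Yangian module is a subquotient of a tensor product of fundamental modules, with Drinfeld polynomials multiplicative at the level of highest weights, it suffices to verify \eqref{Q1}--\eqref{Q2} on the fundamental modules $V^R(k;a)$; Lemma~\ref{tau} reduces each case to a single convenient choice of $a$. The natural module $\C^N$, for which $\Phi_{J,R}(J(F_{ij}))$ acts as zero by the specific choice in \eqref{R-nat} (combined with Corollary~\ref{C:J(X)-b}), yields the $k=n$ case of \eqref{Q1}. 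The modules $\C^{N,m}$ of Lemma~\ref{L:CN,m}, together with the explicit highest weights \eqref{R-CN,m} and Corollary~\ref{C:J(X)-b}, handle the values $2\leq k\leq n-1$ via $k=n-m+1$, and also yield the $k=1$ case of \eqref{Q2} when $\mfg_N=\mfsp_{2n}$. For $\mfg_N=\mfso_N$, the spin representations of Lemma~\ref{spin}, on which $\Phi_{J,R}(J(F_{kl}))$ acts as $-\tfrac{\ka}{2}F_{kl}$, handle the remaining $k=1$ cases of \eqref{Q2}. Each calculation cross-checks the shift by $(n+\ka-i)/2$ (respectively $\ka$ or $\ka+\tfrac{n}{2}$) prescribed in \eqref{Q1}--\eqref{Q2}, completing the proof.
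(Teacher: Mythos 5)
The soft spot in your plan is Part~1, specifically the step $h_{ir}\xi\in\C\xi$ for $r\ge 2$. Your induction via \eqref{Ycr(g)-2} only shows that $h_{i,r+1}\xi$ lies in the $\mfh_N$-weight space of $\xi$ and is killed by every $x_{j0}^+$, i.e.\ it is a $\mfg_N$-singular vector of weight $\lambda^{(1)}$. To conclude it is a multiple of $\xi$ you invoke ``the space of $Y^R$-highest weight vectors of $M^R(\lambda(u))$ is one-dimensional,'' but that is not the statement you need: $h_{i,r+1}\xi$ is not known to be annihilated by all $t_{kl}(u)$ with $k<l$, only by the degree-one simple-root generators, so one-dimensionality of the space of $Y^R$-highest weight vectors does not apply to it, and the universal-property transfer does not repair this. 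The fact that actually closes the argument is that the $\mfh_N$-weight space $V_{\lambda^{(1)}}$ itself is one-dimensional in \emph{any} highest weight $Y^R(\mfg_N)$-module $V$; this follows from the Poincar\'e--Birkhoff--Witt theorem for $Y^R(\mfg_N)$ (Corollary 3.7 of \cite{AMR}) together with the commutation relations of $F_{ii}$ with $t_{kl}(v)$. Once you have this, the entire detour collapses: $\Phi_{\mr{cr},R}(h_{ir})$ commutes with every $F_{kk}$, hence preserves $V_{\lambda^{(1)}}=\C\xi$, and the scalar action holds for all $r\ge 0$ at once, with no induction, no appeal to \eqref{Ycr(g)-2}, and no universal module. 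Your recursion $x_{i,r+1}^+=(\al_i,\al_i)^{-1}[\tilde h_{i1},x_{ir}^+]$ then gives $x_{ir}^+\xi=0$ for all $r$ exactly as you describe, and your explicit verification that $\tilde h_{i1}\xi$ is scalar, while correct, becomes superfluous.

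Part~2 of your plan is essentially the argument the paper gives: reduction to fundamental modules via multiplicativity of Drinfeld polynomials, the Hopf property of $\Phi_{\mr{cr},R}$ and Lemma \ref{tau}, followed by a computation of the eigenvalue $b$ of $J(h_{i0})$ on the highest weight vectors of the modules $\C^{N,m}$ of Lemma \ref{L:CN,m} (including the natural module as $m=1$) and of the spin modules of Lemma \ref{spin}, converted into the shift $a$ via Corollary \ref{C:J(X)-b}. Your indexing $k=n-m+1$ is consistent with the paper's $m=n-i$, and the list of exceptional cases (the $k=1$ case for $\mfsp_{2n}$ from $\C^{N,n}$, the remaining $k=1$ and $k=2$ cases for $\mfso_N$ from the spin modules) is the correct one.
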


\begin{remark}
It should also be possible to use the isomorphism obtained very recently in \cite{JLM} via the Gauss decomposition to prove this theorem.
\end{remark}

\begin{proof}
\textit{Part I}: The assignment $V\mapsto \Phi_{\mr{cr},R}^*(V)$ sends highest weight vectors (resp.~modules) to highest weight vectors (resp.~modules).

Let $V$ be a highest weight $Y^R(\mfg_N)$-module with the highest weight vector $\xi$. 
 Let us first show that $\Phi_{\mr{cr},R}(h_{ir})\xi\in \C \xi$ for all $i\in \mcI$ and $r\geq 0$. After identifying $t_{kl}^{(1)}$ with $F_{kl}$ for each 
 ${-}n\leq k,l\leq n$, $\Phi_{\mr{cr},R}$ is just the identity when restricted to $\mf{Ug}_N$, and thus  $\Phi_{\mr{cr},R}(h_{i0})\xi\in \C \xi$ for all $i\in \mcI$. To prove the general case, we employ the standard result that, if $\lambda^{(1)}=(\lambda_{i}^{(1)})_{i=1}^n$ denotes the tuple of $F_{ii}$-weights of $\xi$ (for $i\geq 1$), then the weight space $V_{\lambda^{(1)}}$ is one-dimensional. This follows from the Poincar\'{e}-Birkhoff-Witt property proven in \cite[Corollary 3.7]{AMR}, together with the commutation relations $[F_{ii},t_{kl}(v)]$ which can be found in (5.3) of \textit{ibid}. Since each member of $\{\Phi_{\mr{cr},R}(h_{ir})\}_{i\in \mcI,r\geq 0}$ commutes with $F_{kk}\in \mfh_N$, we have $F_{kk}(\Phi_{\mr{cr},R}(h_{ir}) \xi)=\lambda_k^{(1)}(\Phi_{\mr{cr},R}(h_{ir})\xi)$ for all $1\leq k\leq n$, and hence $\Phi_{\mr{cr},R}(h_{ir})\xi\in V_{\lambda^{(1)}}=\C \xi$ for 
 each $i\in \mcI$ and $r\geq 0$. 
 
 It now follows easily that $\Phi_{\mr{cr},R}(x_{ir}^+)\xi=0$ for all $i\in \mcI$ and $r\geq 0$ by induction on $r$. The $r=0$ case is immediate since $\Phi_{\mr{cr},R}(x_{i0}^+)$ is a scalar multiple of an element of the form $t_{kl}^{(1)}=F_{kl}$ with $k<l$. The general case follows from the fact that $x_{i,r+1}^+=(\alpha_i,\alpha_i)^{-1}[\tilde h_{i1},x_{ir}^+]$, the induction assumption, and the result that $\Phi_{\mr{cr},R}(\tilde h_{i1})\xi\in \C \xi$ proven in the previous paragraph. Since we already know $\xi$ generates $\Phi_{\mr{cr},R}^*(V)$, we can conclude that $\Phi_{\mr{cr},R}^*(V)$ is indeed a highest weight module with the highest weight vector $\xi$. 
 
\noindent\textit{Part II}: Proving the correspondences \eqref{Q1} and \eqref{Q2} of Drinfeld polynomials. 
 
 Due to Part I, the fact that $\Phi_{\mr{cr},R}$ is a Hopf-algebra isomorphism, and the multiplicative property of Drinfeld polynomials (see \cite[Lemma 5.17]{AMR} for $Y^R(\mfg_N)$ and \cite[Proposition 12.1.12]{ChPr2} for $Y^{\mr{cr}}(\mfg_N)$), it suffices to show that the correspondence \eqref{Q1} and \eqref{Q2} hold for each fundamental representation $V^R(i;a)$ of $Y^R(\mfg_N)$. In fact, by Lemma \ref{tau}, we only need to prove that \eqref{Q1} and \eqref{Q2} hold when $V=V^R(k;a_k)$ for $1\leq k\leq n$ and 
 $(a_k)_{k=1}^n$ a set of fixed complex numbers. 
 
 Suppose first that $2\leq i\leq n-1$ and set $m=n-i$. Then by Lemma \ref{L:CN,m}, and in particular the formulas \eqref{R-CN,m}, the irreducible quotient $V$ of the highest weight module $\C^{N,m}$ is isomorphic to the fundamental representation $V^R(i+1;1)$. By Part I of the theorem, we already know that $\Phi_{\mr{cr},R}^*(V)$ must be isomorphic to the fundamental module 
 $V(i;a)$ for some $a\in \C$. Indeed, $\Phi_{\mr{cr},R}^*(V)$ is a finite-dimensional irreducible module with the highest weight vector $\bar\xi_m$ equal to the image of $\xi_m$ in $V$, and since $\Phi_{\mr{cr},R}$ is the identity when restricted to $\mf{Ug}_N$, the $\mfg_N$ weight of $\bar\xi_m$ is equal to $\omega_{i}$. On the other hand, it is also equal to the element $\lambda\in \mfh_N^*$ defined by $\lambda(h_{j0})=d_j \deg Q_j(u)$, where $(Q_j(u))_{j\in \mcI}$ denotes the Drinfeld tuple of $\Phi_{\mr{cr},R}^*(V)$
 (see Remark 2 of \cite[Section 12.1.C]{ChPr2}). This implies that $\deg Q_j(u)=\delta_{ij}$, and hence that $\Phi_{\mr{cr},R}^*(V)\cong V(i;a)$ for some $a\in \C$. 
 Thus, to verify \eqref{Q1} and \eqref{Q2} for $V=V^R(i+1;1)$ (with $2\leq i\leq n-1$), it suffices to show that 
 \begin{equation}
 a=1-\mfrac{n+\ka-i}{2}. \label{a}
 \end{equation}
The same argument as used to prove the relation \eqref{J(X)-b} of Corollary \ref{C:J(X)-b} shows that, if $b$ denotes the $\bar \xi_m$ weight of the operator $J(h_{i0})$, 
then $a$ is determined by 
\begin{equation}
 a=d_{i}^{-1}b-\tfrac{1}{2}(\ka-d_{i}). \label{a<->b}
\end{equation}
 Let us compute $b$. Since $g(u)\,g(u+\ka)=u^2(u^2-1)^{-1}$, we have $g(u)=1+\tfrac{1}{2}u^{-2}+O(u^{-3})$, and thus $g_m(u)=1+\tfrac{m}{2}u^{-2}+O(u^{-3})$. Therefore, from \eqref{R-CN,m} we deduce that 
\begin{equation}
 t_{jj}^{(2)}\,\bar\xi_m=\mfrac{m}{2}\,\bar\xi_m \; \text{ for all }\; 0\leq j\leq n. \label{Jact.1}
\end{equation}
Conversely, since $t_{kk}^{(1)}\,\bar\xi_m=\omega_{i}(F_{kk})\,\bar\xi_m=-\bar \xi_m$ if $k\geq i+1$ and $\omega_{i}(F_{kk})\,\bar\xi_m=0$ otherwise, 
\begin{equation}\label{Jact.2}
 \msum_k t_{jk}^{(1)}t_{kj}^{(1)}\,\bar\xi_m=\msum_{k=j}^n t_{jk}^{(1)}t_{kj}^{(1)}\,\bar\xi_m=(n-j+1)\,t_{jj}^{(1)}\,\bar\xi_m-\msum_{k=j}^n t_{kk}^{(1)}\,\bar \xi_m+(t_{jj}^{(1)})^2\,\bar \xi_m=\begin{cases}
        m\bar\xi_m & \text{ if } 0\leq j\leq i ,\\
        \bar \xi_m & \text{ if } i+1\leq j\leq n.                                                                                                                                                                                           \end{cases}
\end{equation}
This allows us to conclude that $\Phi_{J,R}(J(F_{jj}))\,\bar\xi_m=0$ if $0\leq j\leq i$, and $\Phi_{J,R}(J(F_{jj}))=\left(\frac{m-1}{2}\right)\bar\xi_m$ if $i+1\leq j\leq n$. 
Since $h_{i,0}=F_{ii}-F_{i+1,i+1}$ for $2\leq i\leq n-1$, we obtain 
\begin{equation*}
 b\bar \xi_m=\Phi_{J,R}(J(h_{i,0}))\,\bar\xi_m=\left(\tfrac{1-m}{2}\right)\bar\xi_m \qu\implies\qu a=b-\tfrac{1}{2}(\ka-1)=1-\mfrac{n+k-i}{2},
\end{equation*}
where we have used that $d_{i}=1$ for $2\leq i\leq n-1$. This completes the verification of \eqref{a}, and hence the proof of the relations \eqref{Q1} and \eqref{Q2} in the case where 
$V$ is the fundamental module $V^R(i+1;1)$ with $2\leq i\leq n-1$. 

When $i=1$ and $m=n-i=n-1$, the irreducible quotient $V$ of $\C^{N,m}$ is no longer isomorphic to a fundamental module when $\mfg_N=\mfso_{2n}$, but it still is isomorphic to 
$V^R(2;1)$ when $\mfg_N=\mfso_{2n+1}$ or $\mfg_N=\mfsp_{2n}$, and the exact same argument as used above can be repeated. When $i=0$, $m=n$, 
the irreducible quotient $V$ of $\C^{N,m}$ is only a fundamental module when $\mfg_N=\mfsp_{2n}$, where we have  $V=V^R(1;2)$. The same reasoning applies in this case, except now we must show instead that 
$a=2-\ka$, 
and we must also take into account the fact that $d_0=2$ in the expression \eqref{a<->b} and that $h_{00}=-2F_{11}$. The relations \eqref{Jact.1} and \eqref{Jact.2} still hold and together with \eqref{a<->b} yield that 
\begin{equation*}
 b\bar\xi_m=\Phi_{J,R}(J(h_{00}))\bar\xi_m=-2\left(\mfrac{m-1}{2}\right)\bar\xi_m=(1-n)\,\bar\xi_m \qu\implies\qu a=\mfrac{1-n}{2}-\mfrac{\ka-2}{2}=2-\ka.
\end{equation*}
Thus, to complete the proof of the theorem is suffices to show that the relations \eqref{Q1} and \eqref{Q2} hold for $\mfg_N=\mfso_N$ when $V=V^R(1;1/2)$, and for  $\mfg_N=\mfso_{2n}$ when $V=V^R(2;1/2)$. Realizations of these modules are provided by Lemma~\ref{spin}. Consider first the $Y^R(\mfso_{2n})$-module $V=V^R(2;1/2)$. By Lemma \ref{spin}, this module is isomorphic to $V(\omega_1)$ as a $\mfso_{2n}$-module, and it is extended to all of $Y^R(\mfg_N)$ by allowing $\Phi_{J,R}(J(F_{ij}))$ to operate as
$-\tfrac{\ka}{2}F_{ij}$ for all $i,j$. By Corollary \ref{C:J(X)-b}, this means precisely that $\Phi_{\mr{cr},R}^*(V)$ is isomorphic to $V(1;a)$ where $a=-\ka+\tfrac{1}{2}$, and hence 
the formulas \eqref{Q1} and \eqref{Q2} hold for the $Y^R(\mfso_{2n})$-module $V^R(2;1/2)$. 

Lastly, we have to show \eqref{Q1} and \eqref{Q2} are respected when $V$ is the $Y^R(\mfso_N)$-module $V^R(1;1/2)$. Just as above, Lemma \ref{spin} and  Corollary \ref{C:J(X)-b}  
yield that $\Phi_{\mr{cr},R}^*(V)\cong V(0;a)$, where $a$ is given by \eqref{a<->b} with $b=-\tfrac{\ka}{2}$. Hence, the relation \eqref{Q1} is respected.
In the $\mfso_{2n}$ case, we again obtain $a=-\ka+\tfrac{1}{2}$, which agrees with \eqref{Q2}. For $\mfg_N=\mfso_{2n+1}$, $d_0=\tfrac{1}{2}$ and we obtain 
$a=-\frac{3}{2}\ka+\frac{1}{4}=-\ka-\tfrac{n}{2}+\tfrac{1}{2}$, which is also compatible with \eqref{Q2}. 
\end{proof}


\appendix


\section{Proof of Theorem \ref{T:Ycr(g)-} when \texorpdfstring{$\mfg=\mfsl_2$}{}}\label{app}

In this appendix, we complete the proof of Theorem \ref{T:Ycr(g)-} in the case where $\mfg=\mfsl_2$. The extra difficulty when $\mfg= \mfsl_2$ is to check that $\Phi_{\mr{cr},J}$ preserves the relation $[\tilde h_{i1},[x_{i1}^+,x_{i1}^-]]=0$ of $Y_\zeta^\cur(\mfg)$. 

We normalize our symmetric invariant non-degenerate bilinear form $(\cdot, \cdot )$ on $\mfsl_2$ so that it is given by $(X,Y)=\mr{Tr}(XY)$ for all $X,Y\in \mfsl_2$. With this choice, the single positive root $\alpha$ has length $2$ and $\{x_{i0}^+,x_{i0}^-,h\}$ coincides with the standard basis $\{e,f,h\}$ of $\mfsl_2$. We will primarily employ the latter notation throughout the remainder of this proof, but when dealing with equations involving $e$ and $f$ simultaneously, we will sometimes replace them with $x^+$ and $x^-$, respectively. An orthonormal basis with respect to $(\cdot ,\cdot)$ is $\mcB=\{E,F,H\}$, where 
\begin{equation*}
E=\tfrac{1}{\sqrt{2}}(e+f)\quad F=\tfrac{1}{\sqrt{-2}}(e-f),\quad H=\tfrac{1}{\sqrt{2}}h.
\end{equation*}
In particular, we have 
\[
[H,E]=e-f=\sqrt{-2}F,\qu 
[H,F]=-\tfrac{\sqrt{-1}}{2}[h,e-f]=\tfrac{1}{i}(e+f)=-\sqrt{-2}E,\qu
[E,F]=\sqrt{-1} h=\sqrt{-2}H.
\]

\medskip

\noindent \textit{Step 1}: Reducing the defining relations.

\medskip

We begin by showing that the relation  
\begin{equation*}
 [[J(e),J(f)],J(h)]=\zeta^2\left(fJ(e)-J(f)\,e\right) h
\end{equation*}
is satisfied in $Y_\zeta(\mfg)$.  This relation is well known throughout the literature (see for instance (4.64) in \cite{BeCr} and Definition 2.22 in \cite{Mo2}), but to our knowledge the relevant computations have never appeared explicitly, so we have decided to include them.

Consider equation \eqref{J3} with $X_1=E=X_3$ and  $X_2=F=X_4$. Since $[J(E),J(F)]=\sqrt{-1}[J(e),J(f)]$ and $[E,J(F)]=\sqrt{-1}J(h)$, the left hand side is equal to $-2\left[[J(e),J(f)],J(h)\right]$. Thus it suffices to show that the right-hand side of \eqref{J3} 
with $X_1=E=X_3$ and  $X_2=F=X_4$ is equal to $-2\zeta^2\left(fJ(e)-J(f)\,e\right)h$. By definition, it is equal to 
\begin{equation}
 2\zeta^2 \msum_{X_\lambda,X_\mu,X_\nu\in \mcB}\left([E,X_\lambda],\big[[F,X_\mu],[[E,F],X_\nu] \big]\right)\{X_\lambda,X_\mu,J(X_\nu)\}. \label{RHS}
\end{equation}
As $[E,F]=\sqrt{-1}h$ and $(\cdot,\cdot)$ is invariant and symmetric, we have 
\begin{equation*}
 \left([E,X_\lambda],\big[[F,X_\mu],[[E,F],X_\nu] \big]\right)=\sqrt{-1}\left([h,X_\nu],\big[[E,X_\lambda],[F,X_\mu] \big] \right). 
\end{equation*}
Thus, we can rewrite \eqref{RHS} as 
\begin{equation}\label{RHS.1}
 \msum_{X_\lambda,X_\mu\in \mcB}\left([h,E],\big[[E,X_\lambda],[F,X_\mu] \big] \right)\{X_\lambda,X_\mu,J(E)\}
  +\msum_{X_\lambda,X_\mu\in \mcB}\left([h,F],\big[[E,X_\lambda],[F,X_\mu] \big] \right)\{X_\lambda,X_\mu,J(F)\}
\end{equation}
multiplied by  $2\sqrt{-1}\zeta^2$. Consider the different nonzero possibilities for $\big[[E,X_\lambda],[F,X_\mu] \big]$. They are 
\begin{equation*}
\big[[E,H],[F,H]\big],\quad \big[[E,H],[F,E]\big]\qu\text{and}\qu \big[[E,F],[F,H]\big].
\end{equation*}
Both $([H,E],\cdot)$ and $([H,F],\cdot)$ applied to the first of these is zero, while $([H,E],\cdot)$ applied to $[[E,H],[F,E]]$ is zero, and 
$([H,F],\cdot)$ applied to $[[E,F],[F,H]]$ is zero. Therefore, \eqref{RHS.1} reduces to 
\begin{equation*}
 \left([h,E],\left[[E,F],[F,H] \right] \right)\{F,H,J(E)\}+\left([h,F],\left[[E,H],[F,E] \right] \right)\{H,E,J(F)\}.
\end{equation*}
By invariance, $\left([h,F],\left[[E,H],[F,E] \right] \right)=-\left([h,E],\left[[E,F],[F,H] \right] \right)$, and moreover 
\begin{equation*}
 \left([h,E],\left[[E,F],[F,H] \right] \right)=-\sqrt{2}\left(e-f,[h,e+f] \right)=-2\sqrt{2}\left(e-f,e-f\right)= 4\sqrt{2}.
\end{equation*}
Hence, \eqref{RHS.1} multiplied by $2\sqrt{-1}\zeta^2$, which is the right-hand side of \eqref{J3} with $X_1=E=X_3$ and $X_2=F=X_4$, is equal to
$8i\sqrt{2}\zeta^2\left(\{F,H,J(E)\}-\{H,E,J(F)\}\right)$. Let us rewrite this in terms of $e,f$ and $h$: 
\begin{equation*}
 8i\sqrt{2}\zeta^2\left(\{F,H,J(E)\}-\{H,E,J(F)\}\right)=4\zeta^2\left(\{e-f,h,J(e+f)\}-\{h,e+f,J(e-f)\}\right)=8\zeta^2\left(\{e,h,J(f)\}-\{h,f,J(e)\}\right).
\end{equation*}
Expanding $\{h,f,J(e)\}-\{e,h,J(f)\}$ and using the defining relations of $\mfsl_2$ to rewrite it only in terms of $fJ(e)\,h$ and $J(f)\,eh$, we find that it is equal to $\frac{1}{4}\big(J(f)\,eh-fJ(e)\,h\big)$, which gives the desired result.

\medskip

\noindent \textit{Step 2}: Checking the relation $\big[\tilde{h}_1,[x_1^-,x_1^+]\big]=0$ is preserved.

\medskip

By definition of $\Phi_{\mr{cr},J}$, this amounts to checking that 
\begin{equation*}
 [J(h)-\zeta v,[J(f)-\zeta w^-,J(e)-\zeta w^+]]=0 ,
\end{equation*}
where \begin{equation*}
 v=\tfrac{1}{2}(\{e,f\}-h^2),\quad w^+=-\tfrac{1}{4}\{e,h\},\quad w^-=-\tfrac{1}{4}\{f,h\}.
\end{equation*}
Expanding the left-hand side we obtain
\begin{gather}\label{2:big}
\begin{split}
 \left[J(h),[J(f),J(e)]\right]-\zeta\left[J(h),[J(f),w^+]+[w^-,J(e)]\right]+\zeta^2\left[J(h),[w^-,w^+]\right] \\
 -\zeta\left[v,[J(f),J(e)]\right]+\zeta^2\left[v,[J(f),w^+]+[w^-,J(e)]\right]-\zeta^3 \left[v,[w^-,w^+]\right]
 \end{split}
\end{gather}

\medskip

\noindent \textit{Step 2.1}: $-\zeta^3 \left[v,[w^-,w^+]\right]=0$.

\medskip

We have 
\begin{equation}
 [w^+,w^-]=\tfrac{1}{16}([eh,fh]+[eh,hf]+[he,fh]+[he,hf])=\tfrac{1}{16}(4h^3-2\{e,f\}h-2e\{f,h\}-2\{f,h\}e-2h\{e,f\}). \label{w+w-}
\end{equation}
Now, since $[v,h]=0$ and $[\{e,f\},h]=0$, we have that 
$[v,\{e,f\}h+h\{e,f\}]=0$. Hence, 
\begin{equation*}
 [v,[w^+,w^-]]=-\tfrac{1}{8}\,[v,2e\{f,h\}+2\{f,h\}e]=-\tfrac{1}{16}\,[\{e,f\}-h^2,e\{f,h\}+\{f,h\}e].
\end{equation*}
Using the commutator relations of $\mfsl_2$, we can write $e\{f,h\}+\{f,h\}e=4efh-2h-2h^2$.
Note that $[h,efh]=0$. Thus, 
\begin{equation*}
 [v,[w^+,w^-]]=-\tfrac{1}{4}[\{e,f\},efh]=\tfrac{1}{4}([ef,efh]+[fe,efh])=\tfrac{1}{4}([fe,ef]h)=0.
\end{equation*}

\medskip

\noindent \textit{Step 2.2}: The terms in \eqref{2:big} involving $\zeta^2$.

\medskip

After rewriting $\left[J(h),[J(f),J(e)]\right]=\zeta^2(fJ(e)-J(f)e)h$, the sum of the terms involving $\zeta^2$ in \eqref{2:big}
is 
\begin{equation}
 \zeta^2\left((fJ(e)-J(f)e)h+\left[J(h),[w^-,w^+]\right]+\left[v,[J(f),w^+]+[w^-,J(e)]\right]\right) \label{h2}
\end{equation}
Consider first $\left[J(h),[w^-,w^+]\right]$. Recall that \eqref{w+w-} and 
$\{e,f\}=2v+h^2$. Hence, since $J(h)$ commutes with $h$ and $v$, we have 
\begin{align*}
 \left[J(h),[w^-,w^+]\right] = {} & \tfrac{1}{8}\left([J(h),2vh+2hv]+[J(h),\{e,\{f,h\}\}]\right) \\
 = {} & \tfrac{1}{8}\left(4[J(h),v]h+[J(h),\{e,\{f,h\}\}]\right).
\end{align*}
Observe that $e\{f,h\}+\{f,h\}e=4efh-2h-2h^2$ and $[J(h),\{e,\{f,h\}\}]=8(J(e)fh-eJ(f)h)$. 
Therefore, we have
\begin{equation}
 \left[J(h),[w^-,w^+]\right]=\tfrac{1}{2}\left([J(h),v]h+2(J(e)fh-eJ(f)h)\right). \label{Jw-w+}
\end{equation}
Let us now turn to the last term in \eqref{h2}. The identity
\begin{equation*}
 [J(x^\pm),w^{\mp}]=-\tfrac{1}{4}[J(x^\pm),x^\mp h +h x^\mp]=\mp\tfrac{1}{2}(J(h)h-x^\mp J(x^\pm)-J(x^\pm)x^\mp)
\end{equation*}
implies that we can expand $[J(f),w^+]+[w^-,J(e)]$ in the form
\begin{align*}
 [J(f),w^+]+[w^-,J(e)]=\tfrac{1}{2}\left(J(h)h-eJ(f)-J(f)e)+(J(h)h-fJ(e)-J(e)f)\right)=J(h)h-\tfrac{1}{2}(\{e,J(f)\}+\{f,J(e)\}).
\end{align*}
Hence, 
\begin{equation}
 [v,[J(f),w^+]+[w^-,J(e)]]=[v,J(h)]h-\tfrac{1}{2}([v,\{e,J(f)\}+\{f,J(e)\}]). \label{deg:h^2}
\end{equation}
We would like to simplify the last term. Since $[h,x^\pm J(x^\mp)+J(x^\mp)x^\pm]=0$, we can replace $v$ by $\tl v$ ($= \nu + \frac12 h^2 = \frac12\{e,f\}$). Next, the sequence of equalities 
\begin{align*}
[\tilde v,\{x^\pm,J(x^\mp)\}]= {} &[\tilde v, x^\pm]J(x^\mp)+x^\pm[\tilde v,J(x^\mp)]+[\tilde v,J(x^\mp)]x^\pm+J(x^\mp)[\tilde v, x^\pm]\\
                             = {} &\tfrac{1}{2}\left(\mp x^\pm hJ(x^\mp)\mp hx^\pm J(x^\mp)\pm x^\pm x^\mp J(h)\pm x^\pm J(h)x^\mp \pm x^\mp J(h)x^\pm \pm J(h)x^\mp x^\pm \mp J(x^\mp)x^\pm h\mp J(x^\mp)hx^\pm \right).
\end{align*}
allow us to express $2[\tilde v,\{e,J(f)\}+\{f,J(e)\}]$ as
\begin{gather*}
 -e hJ(f)- he J(f)+ efJ(h)+ e J(h)f + f J(h)e + J(h)f e - J(f)e h- J(f)he  \\
 + f hJ(e)+ hf J(e)- f e J(h)- f J(h)e - e J(h)f - J(h)e f + J(e)f h+ J(e)hf
\end{gather*}
The terms involving $J(h)$ all cancel, and we are left with 
\begin{equation*}
 2[\tilde v,\{e,J(f)\}+\{f,J(e)\}]=-e hJ(f)- he J(f)- J(f)e h- J(f)he + f hJ(e)+ hf J(e) + J(e)f h+ J(e)hf.
\end{equation*}
We would like to express the right hand side in terms of $fJ(e)h$ and $J(f)eh$ only. Using the defining relations of $\mfsl_2$, we find that 
\begin{equation*}
 2\left[\tl v,\{e,J(f)\}+\{f,J(e)\}\right]=4\,(fJ(e)\,h-J(f)\,eh).
\end{equation*}
Multiplying by $\frac{1}{4}$ and substituting the result back into \eqref{deg:h^2}, we obtain 
\begin{equation*}
 [v,[J(f),w^+]+[w^-,J(e)]]=[v,J(h)]\,h-(fJ(e)\,h-J(f)\,eh).
\end{equation*}
Substituting this and  \eqref{Jw-w+} back into \eqref{h2} gives 
\begin{align*}
 \zeta^2 & \left((fJ(e)-J(f)e)h+\left[J(h),[w^-,w^+]\right]+\left[v,[J(f),w^+]+[w^-,J(e)]\right]\right)\\
 = {} &\zeta^2\left((fJ(e)-J(f)e)h+\tfrac{1}{2}\left([J(h),v]h+2(J(e)fh-eJ(f)h)\right)+[v,J(h)]h-(fJ(e)h-J(f)eh) \right)\\
 = {} &\tfrac{\zeta^2}{2}\left([v,J(h)]h+2(J(e)fh-eJ(f)h)\right).
\end{align*}
Since $[v,J(h)]\,h=\frac{1}{2}[ef+fe,J(h)]\,h=\left(eJ(f)-J(e)f+J(f)e-fJ(e)\right)h=2\left(eJ(f)-J(e)f\right)h$, the above expression is zero, hence  \eqref{h2} is also zero. 

\medskip

\noindent \textit{Step 2.2}: The terms involving $\zeta$.

\medskip

In order to see that \eqref{2:big} vanishes, it remains to see that 
\begin{equation}
 \left[J(h),[J(f),w^+]+[w^-,J(e)]\right]+\left[v,[J(f),J(e)]\right]=0. \label{deg:h}
\end{equation}
First observe that $[h^2,[J(f),J(e)]]=0$, so we may replace $v$ by $\tilde v$ in \eqref{deg:h}. We then have 
\begin{equation*}
 \left[\tilde v,[J(f),J(e)]\right]=-[J(f),[J(e),\tilde v]]-[J(e),[\tilde v,J(f)]].
\end{equation*}
Since $2w^+=[\tilde v, e]$ and $-2w^-=[\tilde v,f]$, we can rewrite the left-hand side of \eqref{deg:h} as 
\begin{equation}
 -[J(f),[J(e),\tilde v]]-[J(e),[\tilde v,J(f)]]+\tfrac{1}{2}[J(h),[J(f),[\tilde v, e]]]-\tfrac{1}{2}[J(h),[[\tilde v,f],J(e)]]. \label{deg:h.1}
\end{equation}
Consider the last two terms. Since $[J(h),[J(x^\pm),[\tilde v,x^\mp]]]=[J(h),[[J(x^\pm),\tilde v],x^\mp]]\pm [J(h),[\tilde v,J(h)]]$, we have 
\begin{align*}
 [J(h),[J(f),[\tilde v, e]]]&-[J(h),[[\tilde v,f],J(e)]]\\
   = {} &[J(h),[[J(f),\tilde v],e]]-[J(h),[[\tilde v,J(e)],f]]\\
   = {} &[[J(h),[J(f),\tilde v]],e]+2[[J(f),\tilde v],J(e)]-[[J(h),[\tilde v,J(e)]],f]+2[[\tilde v, J(e)],J(f)].                                              
\end{align*}
Substituting these new expression back into \eqref{deg:h.1} we obtain that the left-hand side of \eqref{deg:h} is equal to 
\begin{equation}
 \tfrac{1}{2}\left([[J(h),[J(f),\tilde v]],e]-[[J(h),[\tilde v,J(e)]],f] \right). \label{deg:h.2}
\end{equation}
Let us show that this vanishes. Since $\tilde v=\frac{1}{2}\{e,f\}$, we have 
\begin{equation*}
 [J(x^\pm),\tilde v]=\tfrac{1}{2}[J(x^\pm),x^+f+fx^+]=\pm\tfrac{1}{2}(x^\pm J(h)+J(h)x^\pm).
\end{equation*}
Thus, $[J(h),[J(f),\tilde v]]=J(f)J(h)+J(h)J(f)$ and, similarly, $[J(h),[\tilde v,J(e)]]=-J(e)J(h)-J(h)J(e)$. Substituting these back into \eqref{deg:h.2}, we see that it vanishes, and hence \eqref{deg:h} holds. This completes the proof that $\Phi_{\mr{cr},J}$ preserves the relation $[h_1,[x_1^-,x_1^+]]=0$. \qed


\end{document}